\documentclass[12pt]{amsart}
\usepackage[margin=1.2in]{geometry}
\usepackage{amssymb,amsfonts,latexsym,amscd}
\usepackage[dvipsnames]{xcolor}
\usepackage[all]{xy}
\usepackage{verbatim}
\usepackage{hyperref}
\usepackage{mathrsfs}
\usepackage{mathtools}
\usepackage{amssymb}
\usepackage{stmaryrd}
\usepackage{tikz-cd}

\hypersetup{
    colorlinks=true, 
    linkcolor=Blue, 
    urlcolor=Blue, 
    citecolor=Blue, 
    filecolor=Blue, 
    pdfborder={0 0 0} 
}

\newtheorem{theorem}{Theorem}[section]
\newtheorem{lemma}[theorem]{Lemma}
\newtheorem{proposition}[theorem]{Proposition}
\newtheorem{corollary}[theorem]{Corollary}
\theoremstyle{definition}

\newtheorem{example}[theorem]{Example}

\newtheorem{remark}[theorem]{Remark}

\numberwithin{equation}{section}

\newcommand{\ot}{\otimes}

\newcommand{\B}{\mathcal{B}}

\newcommand{\cS}{\mathcal{S}}

\newcommand{\Hom}{\operatorname{Hom}}

\newcommand{\id}{\operatorname{id}}

\newcommand{\gen}[1]{{\left\langle #1 \right\rangle}}

\newcommand{\im}{\text{im}}

\renewcommand{\bar}[1]{\overline{#1}}

\newcommand{\Rep}{\operatorname{Rep}}

\renewcommand{\Vec}{\operatorname{Vec}}

\newcommand{\coadj}{\rightharpoonup\mathrel{\mspace{-15mu}}\rightharpoonup}
\newcommand{\ad}{\operatorname{ad}}

\begin{document}
	
\title[On the Drinfeld double of a finite group scheme]{On the Drinfeld double of a finite group scheme and its representation category}

\author{Daniel Arreola}
\address{Department of Mathematics,
Iowa State University, Ames, IA 50100, USA.}
\email{darre@iastate.edu}

\author{Shlomo Gelaki}
\address{Department of Mathematics,
Iowa State University, Ames, IA 50100, USA.}
\email{gelaki@iastate.edu}

\date{\today}

\keywords{finite group scheme; Drinfeld double; finite braided tensor category; representation theory}

\begin{abstract}
We classify equivalence classes of Hopf algebra quotient pairs $(D,\theta)$ of the Drinfeld
double $D(G)$ of a finite group scheme $G$ over an algebraically closed field
$\mathbf{k}$ of characteristic $p\ge 0$, in terms of group scheme-theoretical data. We prove that such Hopf algebra quotients $D$ are Hopf algebra extensions $\mathscr{O}(K)^{\mathrm{cop}}\#_{\sigma}^{\tau} \mathbf{k}[G/H]$, where $K$ and $H$ are normal subgroup schemes of $G$ that centralize each other and $B:\mathbf{k}[H]\to \mathscr{O}(K)$ is a $G$-equivariant Hopf algebra map, and describe the surjective Hopf algebra map $\theta:D(G)\twoheadrightarrow D$. Using this classification, we determine the tensor subcategories of the center $\mathscr{Z}(G):=\Rep(D(G))$ of $G$, describe their centralizers, determine when they are symmetric or non-degenerate, and give a description of their simple and projective objects using \cite{GS}. 
Our categorical results generalize those found in \cite{NNW} in characteristic $0$.
\end{abstract}

\maketitle

\tableofcontents

\section{Introduction}\label{sec:Introduction}
The representation category $\mathscr{Z}(G):=\Rep(D(G))$ of the Drinfeld double $D(G)$ of a finite group scheme $G$ over an algebraically closed field $\mathbf{k}$ of characteristic $p\ge 0$ plays a central role in the theory of finite braided tensor categories 
\cite{CH,CCC,DGNO,ERW,FN1,FN2,Ge,Ge2,GS,GS2,Go,N,NNW}. In characteristic $p=0$, the category $\mathscr{Z}(G)$ is a non-degenerate braided {\em fusion} category; in this semisimple setting the tensor subcategories of $\mathscr{Z}(G)$ were classified by Naidu, Nikshych, and Witherspoon \cite{NNW}. A key feature of \cite{NNW} is that \emph{modular data} can be used as an effective organizing principle. Namely, the $S$-matrix of $\mathscr{Z}(G)$ detects when two simple objects of $\mathscr{Z}(G)$ centralize each other, and M\"uger centralizers can be computed from $S$-matrix relations, ultimately translating the subcategory problem into explicit group-theoretic conditions involving commuting normal subgroups of $G$.

In positive characteristic $p>0$, $\mathscr{Z}(G)$ is typically \emph{not semisimple}. While one still has a robust notion of non-degeneracy for finite braided tensor categories (e.g. in the sense of Lyubashenko, equivalent to factorizability in the sense of Etingof--Nikshych--Ostrik by Shimizu \cite{Sh}), there is in general no computable $S$-matrix attached to the simple objects that could play the same role as in the fusion case. Thus, the strategy of \cite{NNW} does not directly extend to the non-semisimple setting.

Our approach replaces $S$-matrix technology with a \emph{Hopf-theoretic} classification of Hopf quotient pairs of $D(G)$, and then uses the fact that Hopf quotient pairs of $D(G)$ encode tensor subcategories of $\mathscr{Z}(G)$. Indeed, tensor subcategories of $\Rep(H)$, for a finite dimensional Hopf algebra $H$, correspond to equivalence classes of Hopf algebra quotient pairs of $H$ (see, e.g. \cite[Proposition 2]{BN}), so one can study tensor subcategories of $\mathscr{Z}(G)=\Rep(D(G))$ by classifying Hopf algebra quotient pairs of $D(G)$. This perspective is particularly well-suited to positive characteristic: although $S$-matrices are unavailable, the braiding on $\Rep(D(G))$ is still controlled by the universal $R$-matrix of $D(G)$, and centralizing conditions can be tested on the Hopf algebra side via the element $R_{21}R$ (cf. the criterion used in the proof of Theorem \ref{main2} below).

The first main result of this paper is a classification of Hopf algebra quotient pairs $(D,\theta)$ of $D(G)$ in terms of group scheme-theoretical data. We show that every Hopf algebra quotient $D$ of $D(G)$ is of the form $$D(K,H,B):=\mathscr{O}(K)^{\mathrm{cop}}\#^{\tau}_{\sigma} \mathbf{k}[G/H],$$ where $K$ and $H$ are normal subgroup schemes of $G$ that centralize each other and $B\colon \mathbf{k}[H]\to \mathscr{O}(K)$ is a $G$-equivariant Hopf algebra map, and describe the surjective Hopf algebra map $\theta:D(G)\twoheadrightarrow D$ (Theorems \ref{quotientha} and \ref{uniqueness}). Moreover, $D(K,H,B)$ is always ribbon braided, and we give explicit formulas for its $R$-matrix and ribbon element (Corollary \ref{main0}). We also determine precise criteria for $D(K,H,B)$ to be triangular or factorizable, yielding large families of quasitriangular Hopf algebras realized as Hopf algebra quotients of $D(G)$.

The second main result is a complete, group scheme-theoretical description of the tensor subcategory lattice of $\mathscr{Z}(G)$ and its centralizer theory. Using the above Hopf algebra quotient pairs classification, we show that the assignment 
$$(K,H,B)\mapsto \mathscr{Z}(K,H,B):=\Rep(D(K,H,B))$$
yields a bijection between triples $(K,H,B)$ and tensor subcategories of $\mathscr{Z}(G)$. We then compute M\"uger centralizers inside $\mathscr{Z}(G)$: for each tensor subcategory $\mathscr{Z}(K,H,B)$, its centralizer is $\mathscr{Z}(H,K,\overline{B})$ (Theorem \ref{main2}), generalizing the characteristic-$0$ picture of \cite{NNW} from modular-data methods to a purely Hopf-theoretic criterion. As consequences, we obtain explicit criteria for symmetry, non-degeneracy, and Lagrangianity of tensor subcategories of $\mathscr{Z}(G)$ (Corollary \ref{main4}). 

Finally, using the geometric description of $\mathscr{Z}(G)$ from \cite{GS}, we describe the simple and projective objects in $\mathscr{Z}(K,H,B)$ and compute their Frobenius-Perron dimensions (Theorem \ref{main1}). In \S\ref{sec:examples}, we discuss some special cases and examples illustrating our results. In particular, our factorizability criteria produce families of non-degenerate finite braided tensor categories in positive characteristic (coming from factorizable Hopf algebra quotients of $D(G)$), and we exhibit examples in both the constant and connected cases.

\begin{remark}\label{future}
The results of this paper can be extended to the twisted Drinfeld double $D^{\omega}(G)$ of a finite group scheme $G$ (it is a quasi-Hopf algebra) and its representation category $\Rep(D^{\omega}(G))$, by using similar ideas and results from \cite{Ge, GS2}. In characteristic $p=0$, this was done in \cite[Section 5]{NNW}. \qed
\end{remark}

\subsection{Organization}
In \S2 we recall background on finite group schemes, Hopf algebras, and Drinfeld doubles. In \S3 we construct and classify Hopf algebra quotient pairs of $D(G)$ and analyze the induced (quasitriangular ribbon) structures on $D(K,H,B)$. In \S4 we apply these results to classify tensor subcategories of $\mathscr{Z}(G)$, compute their centralizers, and derive criteria for symmetry, nondegeneracy, and Lagrangianity. We then describe the simples and projectives in $\mathscr{Z}(K,H,B)$ using \cite{GS}. 

\subsection{Acknowledgements} 
The work of S.G. was supported by Simons Foundation Award 963288.

\section{Preliminaries}\label{sec:Preliminaries}

We work over an algebraically closed field $\mathbf{k}$ of characteristic $p\ge0$. 
We assume familiarity with the theory of finite tensor categories and finite group schemes over $\mathbf{k}$, and refer to \cite{EGNO,J,W} for any unexplained notion. 

\subsection{Finite group schemes}\label{sec:Finite group schemes}
A finite group scheme $G$ over $\mathbf{k}$ is a finite scheme over $\mathbf{k}$ whose coordinate algebra $\mathscr{O}(G)$ is a finite dimensional commutative Hopf algebra (see, e.g., \cite{J,W}), so that its {\em group algebra} $\mathbf{k}[G]:=\mathscr{O}(G)^*$ is a finite dimensional cocommutative Hopf algebra. We set $|G|:=\dim_{\mathbf{k}}(\mathscr{O}(G))=\dim_{\mathbf{k}}(\mathbf{k}[G])$. 

Let $G^{\circ}$ be the identity component of $G$, and let $G(\mathbf{k})\subseteq G$ be the subgroup of closed points of $G$. Recall that we have a split exact sequence of finite group schemes
\begin{equation}\label{sesG}
1\to G^{\circ}\xrightarrow{} G \mathrel{\mathop{\rightleftarrows}^{\pi}_{\gamma}} G(\mathbf{k})\to 1.
\end{equation}

Let $L$ be a closed subgroup scheme of $G$, let 
\begin{equation}\label{iotaL}
\iota=\iota_L=\iota_{L,G}:L\hookrightarrow G
\end{equation}
be the inclusion of group schemes, and let 
\begin{equation}\label{qL}
q=q_L=q_{G,L}=\iota_{L}^{\sharp}:\mathscr{O}(G)\twoheadrightarrow\mathscr{O}(L)
\end{equation}
be the corresponding surjective Hopf algebra map. Recall \cite[Theorem 2.2]{S} that we can choose a section 
\begin{equation}\label{muL}
\mu=\mu_L:\mathscr{O}(L)\xrightarrow{1:1} \mathscr{O}(G)
\end{equation}
for $q$, so that $\mu$ is an $\mathscr{O}(L)$-colinear map; that is, $\varepsilon \mu = \varepsilon$ and 
\begin{equation}\label{mucolinear}
\mu(a_1)\ot a_2=\mu(a)_1\ot q(\mu(a)_2);\quad\forall a\in \mathscr{O}(L),
\end{equation}
such that $\mu(1)=1$ and 
\begin{equation}\label{qLmuLsection}
q(\mu(a))=a;\quad \forall a\in \mathscr{O}(L).
\end{equation}
We will denote the dimension of the algebra $\mathscr{O}(G/L)=\mathscr{O}(G)^L$ by $[G:L]$.

Recall that the {\em right adjoint} (or conjugation) action of $G$ on itself is given by
\begin{equation}\label{right adjoint action}
{\rm ad_r}:G\times G\to G,\quad (g,f)\mapsto f^{-1}gf;
\end{equation}
that is, ${\rm ad_r}(u)(\tilde{u})=S(u_1)\tilde{u}u_2$ for $u,\tilde{u}\in\mathbf{k}[G]$, 
and the algebra comorphism
\begin{equation}\label{coadog}
{\rm ad_r}^{\sharp}:\mathscr{O}(G)\to\mathscr{O}(G)\ot\mathscr{O}(G),\quad b\mapsto b_2\ot S(b_1)b_3,
\end{equation}
is the {\em right adjoint} coaction of $\mathscr{O}(G)$ on itself. 
Then ${\rm ad_r}^{\sharp}$ corresponds to the \textit{left coadjoint} action $\coadj$ of $\mathbf{k}[G]$ on $\mathscr{O}(G)$, given by
\begin{equation}\label{left coadjoint action}
\mathbf{k}[G]\otimes\mathscr{O}(G)\to \mathscr{O}(G),\quad u\otimes b\mapsto u \coadj b := \left\langle S(b_1)b_3, u \right\rangle b_2.
\end{equation} 
Namely, for every $u,\tilde{u}\in \mathbf{k}[G]$ and $b\in \mathscr{O}(G)$, we have 
\begin{equation}\label{left coadjoint action2}
\langle u \coadj b,\tilde{u}\rangle=\langle b,{\rm ad_r}(u)(\tilde{u})\rangle.
\end{equation}
Clearly, if $G$ is commutative (that is, $\mathscr{O}(G)$ is cocommutative), then $\coadj$ is trivial.

\subsection{Normal subgroup schemes}\label{sec:Normal subgroup schemes}
Fix a finite group scheme $G$ over $\mathbf{k}$ as in \S\ref{sec:Finite group schemes}. Let $H\subseteq G$ be a {\em normal} subgroup scheme; that is, ${\rm ad_r}(H\times G)\subseteq H$, so that $G/H$ is a finite group scheme. Let 
$\pi=\pi_H:G\twoheadrightarrow G/H$ be the quotient group scheme morphism. By \cite[Theorem 2.2]{S}, the exact sequence of Hopf algebras
\begin{equation}\label{gammaH}
\mathbf{k}\to \mathbf{k}[H]\xrightarrow{\iota} \mathbf{k}[G]\xrightarrow{\pi} \mathbf{k}[G/H] \to \mathbf{k}
\end{equation}
is {\em cleft}. That is, viewing $\mathbf{k}[G]$ as a {\em right} $\mathbf{k}[G/H]$-comodule via
$$\rho=\rho_H:\mathbf{k}[G]\to \mathbf{k}[G]\ot \mathbf{k}[G/H],\quad u\mapsto u_1\ot\pi(u_2),$$
we have $\mathbf{k}[G]^{\text{co} \mathbf{k}[G/H]} = \mathbf{k}[H]$, and we can choose a section 
\begin{equation}\label{gamma}
\gamma=\gamma_H:\mathbf{k}[G/H]\xrightarrow{1:1} \mathbf{k}[G],
\end{equation}
which is a convolution invertible $\mathbf{k}[G/H]$-colinear map; that is, $\varepsilon \gamma = \varepsilon$ and 
\begin{equation}\label{gammacolinear}
\gamma(x_1)\ot x_2=\gamma(x)_1\ot \pi(\gamma(x)_2);\quad\forall x\in \mathbf{k}[G/H],
\end{equation}
such that 
$$\pi(\gamma(x))=x;\quad\forall x\in \mathbf{k}[G/H].$$

Consider the map $\eta=\eta_H:=\id\star(\gamma^{-1}\pi)$; that is,
\begin{equation}\label{etafromgamma}
\eta: \mathbf{k}[G] \to \mathbf{k}[H],\quad u\mapsto u_1\gamma^{-1}(\pi(u_2)).
\end{equation}
Note that since $\gamma$ and $\gamma^{-1}$ preserve the counit,  
$$\pi(u_1 \gamma^{-1}(\pi(u_2))) = \pi(u_1) S(\pi(u_2)) = \varepsilon(u) = \varepsilon(\gamma^{-1}\pi(u)) = \varepsilon(u_1 \gamma^{-1}(\pi(u_2))),$$
so indeed $\eta(u) \in \mathbf{k}[G]^{\text{co} \mathbf{k}[G/H]} = \mathbf{k}[H]$.

\begin{lemma}\label{gcphgh}
The following hold:
\begin{enumerate}
\item
$\eta$ is a $\mathbf{k}[H]$-linear retraction for the inclusion map $\iota:\mathbf{k}[H]\hookrightarrow \mathbf{k}[G]$, such that 
for every $x\in \mathbf{k}[G/H]$, we have $\eta(\gamma(x))=\varepsilon(x)$.
\item
For every $u\in \mathbf{k}[G]$, we have $u=\eta(u_1)\gamma(\pi(u_2))$.
\item
For every $u,\tilde{u}\in \mathbf{k}[G]$, we have  
$$\eta(u\tilde{u})=\eta(u_1)\gamma(\pi(u_2))_1\eta(\tilde{u}_1)S(\gamma(\pi(u_2))_2)\eta\{\gamma(\pi(u_2))_3\gamma(\pi(\tilde{u}_2))\}.$$
\item
For every $u\in \mathbf{k}[G]$, we have $\gamma(\pi(u))=\eta^{-1}(u_1)u_2$.
\item
For every $u\in \mathbf{k}[G]$, we have $\eta^{-1}(u)=\gamma(\pi(u_1))S(u_2)$.
\end{enumerate} 
\end{lemma}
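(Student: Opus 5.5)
All five parts will come from direct Sweedler-notation manipulations using three ingredients: that $\pi$ is a Hopf map, the colinearity \eqref{gammacolinear} of $\gamma$, and the convolution identities $\gamma\star\gamma^{-1}=\gamma^{-1}\star\gamma=\varepsilon$. A preliminary fact I will use throughout is that $\gamma^{-1}$ also satisfies a colinearity identity. Namely, $\gamma^{-1}(x)=\gamma^{-1}(x_1)_1\otimes$-type relations follow from the standard result for cleft extensions that $\gamma^{-1}(x_2)\otimes S(x_1)=\gamma^{-1}(x)_1\otimes\pi(\gamma^{-1}(x)_2)$, i.e.\ $\gamma^{-1}$ is "anti-colinear". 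I will prove this by checking that both sides are convolution inverses of $\gamma$ composed appropriately, which is a routine but somewhat fiddly uniqueness-of-inverses argument.

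\textbf{Part (1).} For $h\in\mathbf{k}[H]$ we have $\pi(h)=\varepsilon(h)1$, so $\eta(h)=h_1\varepsilon(h_2)=h$ because $\gamma^{-1}(1)=1$ (from $\gamma(1)=1$ after normalizing; otherwise use $\pi(h_2)=\varepsilon(h_2)$ and $\gamma^{-1}(1)\gamma(1)=1$). $\mathbf{k}[H]$-linearity is similar: $\eta(hu)=h_1u_1\gamma^{-1}(\pi(h_2)\pi(u_2))=h\,\eta(u)$. For $\eta(\gamma(x))$, colinearity gives $\gamma(x)_1\otimes\pi(\gamma(x)_2)=\gamma(x_1)\otimes x_2$, so $\eta(\gamma(x))=\gamma(x_1)\gamma^{-1}(x_2)=\varepsilon(x)$.

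\textbf{Parts (2), (4), (5).} For (2), compute
\[
\eta(u_1)\gamma(\pi(u_2))=u_1\gamma^{-1}(\pi(u_2))\gamma(\pi(u_3))=u,
\]
using $\gamma^{-1}\star\gamma=\varepsilon$ applied to $\pi(u_2)$ together with coassociativity. For (4), I first identify $\eta^{-1}$ as $u\mapsto\gamma(\pi(u_1))S(u_2)$; this already proves (5), and I verify it directly by checking $\eta\star\eta^{-1}=\varepsilon$:
\[
u_1\gamma^{-1}(\pi(u_2))\gamma(\pi(u_3))S(u_4)=\varepsilon(u).
\]
The other side, $\eta^{-1}\star\eta=\varepsilon$, is handled similarly. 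Then (4) follows:
\[
\eta^{-1}(u_1)u_2=\gamma(\pi(u_1))S(u_2)u_3=\gamma(\pi(u)).
\]

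\textbf{Part (3).} This is the main computation. Write $\tilde u=\eta(\tilde u_1)\gamma(\pi(\tilde u_2))$ by (2), and likewise for $u$. Then
\[
u\tilde u=\eta(u_1)\,\gamma(\pi(u_2))\,\eta(\tilde u_1)\,\gamma(\pi(\tilde u_2)).
\]
Insert $\gamma(\pi(u_2))_1 S(\gamma(\pi(u_2))_2)\gamma(\pi(u_2))_3$ to move the factor past $\eta(\tilde u_1)$. Since $H$ is normal, the adjoint expression $g_1\eta(\tilde u_1)S(g_2)$ lies in $\mathbf{k}[H]$. Applying the $\mathbf{k}[H]$-linearity of $\eta$ from (1) then pulls $\eta(u_1)\,g_1\eta(\tilde u_1)S(g_2)$ out of $\eta$, leaving $\eta\{g_3\gamma(\pi(\tilde u_2))\}$, which is exactly the stated formula. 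The only delicate point is the Sweedler bookkeeping: $u_2$ must be coproduct-split only through $\gamma(\pi(u_2))$, not through $u$ itself, which the insertion above respects. The step I expect to be the main obstacle is the convolution-inverse verification for (5), specifically the side requiring colinearity of $\gamma^{-1}$.
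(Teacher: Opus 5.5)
Your proposal is correct and follows essentially the same route as the paper: colinearity of $\gamma$ for (1), $\eta\star(\gamma\pi)=\id$ for (2), inserting $\gamma(\pi(u_2))_1S(\gamma(\pi(u_2))_2)\gamma(\pi(u_2))_3$ and using normality of $H$ together with $\mathbf{k}[H]$-linearity of $\eta$ for (3), and the formula $\eta^{-1}=(\gamma\pi)\star S$ for (4)--(5).

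Two small corrections. First, the obstacle you anticipate does not exist. The other side of the inverse check is
$$\eta^{-1}(u_1)\eta(u_2)=\gamma(\pi(u_1))S(u_2)u_3\gamma^{-1}(\pi(u_4))=\gamma(\pi(u_1))\gamma^{-1}(\pi(u_2))=\varepsilon(u),$$
which uses no colinearity of $\gamma^{-1}$, so your preliminary ``anti-colinearity'' fact can be dropped. Second, the retraction property in (1) genuinely needs the normalization $\gamma(1)=1$, which the paper also assumes tacitly. Without it one only gets $\eta(h)=h\gamma(1)^{-1}$, so your ``otherwise'' fallback does not work.
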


\begin{proof}
(1) Using (\ref{gammacolinear}) we verify $\eta(\gamma(x)) = \gamma(x)_1 \gamma^{-1}(\pi(\gamma(x)_2)) = \gamma(x_1) \gamma^{-1}(x_2) = \varepsilon(x)$.

(2) Since $\eta= \id \star (\gamma^{-1}\pi)$, we have $\eta\star (\gamma \pi) = \id$.

(3) By (2), we have
\begin{eqnarray*}
\lefteqn{u\tilde{u}=\left\{\eta(u_1)\gamma(\pi(u_2))\right\}
\left\{\eta(\tilde{u}_1)\gamma(\pi(\tilde{u}_2))\right\}}\\
& = & \eta(u_1)\left\{\gamma(\pi(u_2))_1\eta(\tilde{u}_1)S(\gamma(\pi(u_2))_2)\right\}
\gamma(\pi(u_2))_3\gamma(\pi(\tilde{u}_2)),
\end{eqnarray*}
which implies that
$$\eta(u\tilde{u})=\eta(u_1)\left\{\gamma(\pi(u_2))_1\eta(\tilde{u}_1)S(\gamma(\pi(u_2))_2)\right\}\eta\left\{\gamma(\pi(u_2))_3\gamma(\pi(\tilde{u}_2))\right\},$$
as claimed.

(4)-(5) Straightforward.
\end{proof}

Recall that the Hopf algebra $\mathbf{k}[G/H]$ measures $\mathbf{k}[H]$ via $\cdot$, given by
$$\mathbf{k}[G/H]\ot \mathbf{k}[H]\to \mathbf{k}[H],\quad x\cdot u=\gamma(x_1)u\gamma^{-1}(x_2).$$
Recall also that the $\mathbf{k}$-linear map  
\begin{equation}\label{defnolsigma}
\overline{\sigma}: \mathbf{k}[G/H] \otimes \mathbf{k}[G/H] \to \mathbf{k}[H],\quad 
\overline{\sigma}(x,y)=\gamma(x_1)\gamma(y_1)\gamma^{-1}(x_2y_2),
\end{equation}
is a $2$-cocycle, and the $\mathbf{k}$-linear map
\begin{equation}\label{defnoltau}
\begin{split}
& \overline{\tau}: \mathbf{k}[G/H] \to \mathbf{k}[H]\otimes \mathbf{k}[H],\\ 
& \overline{\tau}(x)=
\eta^{-1}(\gamma(x)_1)_1\eta(\gamma(x)_2)\otimes \eta^{-1}(\gamma(x)_1)_2\eta(\gamma(x)_3),
\end{split}
\end{equation}
is a co-cocycle. For example,
to see that $\overline{\sigma}$ is well defined, note first that
$$\pi(\gamma(x_1)\gamma(y_1)\gamma^{-1}(x_2y_2)) = x_1y_1S(x_2y_2) = \varepsilon(xy).$$ On the other hand, using (\ref{gammacolinear}) we have 
\begin{eqnarray*}
\lefteqn{\varepsilon(\gamma(x_1)\gamma(y_1)\gamma^{-1}(x_2y_2))=
\varepsilon(\gamma(x)_1 \gamma(y)_1 \gamma^{-1}(\pi(\gamma(x)_2 \gamma(y)_2)))}\\ 
& = & \varepsilon(\gamma^{-1}(\pi(\varepsilon(\gamma(x)_1) \gamma(x)_2 \varepsilon(\gamma(y)_1)\gamma(y)_2))) \\
& = & \varepsilon(\gamma^{-1}(\pi(\gamma(x)\gamma(y))))
= \varepsilon(\gamma^{-1}(xy))= \varepsilon(xy).
\end{eqnarray*}
This shows that $\gamma(x_1)\gamma(y_1)\gamma^{-1}(x_2y_2) \in \mathbf{k}[G]^{\text{co} \mathbf{k}[G/H]} = \mathbf{k}[H]$, as required.

Finally, it is well known (see e.g. \cite[Section 3.2]{AD}) that the map 
$${\rm f}:\mathbf{k}[G]\to \mathbf{k}[H]\#_{\overline{\sigma}}^{\overline{\tau}} \mathbf{k}[G/H],\quad u\mapsto \eta\left(u_1\right)\# \pi\left(u_2\right),$$
is a Hopf algebra isomorphism, whose inverse is given by
$${\rm f}^{-1}:\mathbf{k}[H]\#_{\overline{\sigma}}^{\overline{\tau}}\mathbf{k}[G/H]\to \mathbf{k}[G],\quad v\# x\mapsto v\gamma(x).$$
For example, using (\ref{gammacolinear}) and the fact that $\eta$ is $\mathbf{k}[H]$-linear, we compute
\begin{eqnarray*}
\lefteqn{{\rm f}({\rm f}^{-1}(v\ot x))={\rm f}(v\gamma(x))=\eta\left(v_1\gamma(x)_1\right)\ot \pi\left(v_2\gamma(x)_2\right)}\\
& = & v_1\eta\left(\gamma(x)_1\right)\ot \varepsilon(v_2)\pi\left(\gamma(x)_2\right)= v\eta\left(\gamma(x)_1\right)\ot \pi\left(\gamma(x)_2\right)\\
& = & v\eta\left(\gamma(x_1)\right)\ot x_2=v\varepsilon(x_1)\ot x_2=v\ot x,
\end{eqnarray*}
and
\begin{eqnarray*}
\lefteqn{{\rm f}^{-1}({\rm f}(u))={\rm f}^{-1}(\eta\left(u_1\right)\ot \pi\left(u_2\right))=\eta\left(u_1\right)\gamma\left(\pi\left(u_2\right) \right)}\\
& = & u_1\gamma^{-1}(\pi(u_2))\gamma\left(\pi\left(u_3\right) \right)=u_1\varepsilon(\pi(u_2))=u,
\end{eqnarray*}
so ${\rm f}$ and ${\rm f}^{-1}$ are indeed inverse of each other.

\begin{remark}\label{newtriv}
(1) If $\gamma$ (\ref{gamma}) is an algebra map then $\overline{\sigma}$ (\ref{defnolsigma}) is trivial.

(2) If $\eta$ (\ref{etafromgamma}) is a coalgebra map, equivalently if $\gamma$ (\ref{gamma}) is a coalgebra map (e.g. if $H=1$, or $G=G(\mathbf{k})$ is constant), then $\overline{\tau}$ (\ref{defnoltau}) is trivial. Indeed, since $\mathbf{k}[G]$ is cocommutative, $\eta^{-1}=(\gamma\pi)\star S$ is also a coalgebra map (see Lemma \ref{gcphgh}(4)), so   
\begin{eqnarray*}
\lefteqn{\overline{\tau}(x)=
\eta^{-1}(\gamma(x)_1)_1\eta(\gamma(x)_2)\otimes \eta^{-1}(\gamma(x)_1)_2\eta(\gamma(x)_3)}\\
& = & \eta^{-1}(\gamma(x_1))_1\eta(\gamma(x_2))\otimes \eta^{-1}(\gamma(x_1))_2\eta(\gamma(x_3))\\
& = & \eta^{-1}(\gamma(x_1))_1\varepsilon(x_2)\otimes \eta^{-1}(\gamma(x_1))_2\varepsilon(x_3)\\
& = & \eta^{-1}(\gamma(x))_1\otimes \eta^{-1}(\gamma(x))_2=
\eta^{-1}(\gamma(x_1))\otimes \eta^{-1}(\gamma(x_2))\\
& = & \varepsilon(x_1)\otimes \varepsilon(x_2)=\varepsilon(x)1\ot 1,
\end{eqnarray*}
as claimed. \qed
\end{remark}

\subsection{The Drinfeld double of a finite group scheme}\label{sec:The Drinfeld double of a finite group scheme}
Fix a finite group scheme $G$ over $\mathbf{k}$ as in \S\ref{sec:Finite group schemes}. 
Recall that the Drinfeld double 
$$D(G)=\mathscr{O}(G)^{{\rm cop}} \bowtie \mathbf{k}[G]$$
of $\mathbf{k}[G]$ is the tensor coalgebra $\mathscr{O}(G)^{{\rm cop}} \otimes \mathbf{k}[G]$, equipped with the product rule 
$$(b \bowtie u)(b' \bowtie u') = b(u_1 \coadj b') \bowtie u_2u',$$
where $\coadj$ is defined in (\ref{left coadjoint action}).  
Recall that $D(G)$ is an involutive Hopf algebra, such that the natural maps 
$$\mathbf{k}[G] \hookrightarrow D(G),\quad {\rm and}\quad \mathscr{O}(G)^{{\rm cop}} \hookrightarrow D(G)$$ 
are Hopf algebra inclusions. Since for every $u \in \mathbf{k}[G]$ and $b \in \mathscr{O}(G)$,  
\begin{eqnarray*}
\lefteqn{(1\bowtie u_1) (b\bowtie 1) (1\bowtie  S(u_2))=(1 \bowtie u_1)(b \bowtie S(u_2))}\\
& = & (u_1 \coadj b) \bowtie u_2S(u_3)= (u_1 \coadj b) \bowtie \varepsilon(u_2) = (u \coadj b) \bowtie 1,
\end{eqnarray*} 
it follows that $\mathscr{O}(G)$ is a normal Hopf subalgebra of $D(G)$, so we have an exact sequence of Hopf algebras 
$$\mathbf{k}\rightarrow \mathscr{O}(G) \rightarrow D(G) \rightarrow D(G)/(D(G)\mathscr{O}(G)^+) \rightarrow \mathbf{k}.$$ Since the map
\begin{equation}\label{qkg}
D(G)/(D(G)\mathscr{O}(G)^+) \to \mathbf{k}[G],\quad [b \bowtie u]\mapsto \varepsilon(b)u,
\end{equation}
is a Hopf algebra isomorphism, we have an exact sequence of Hopf algebras 
$$\mathbf{k}\rightarrow \mathscr{O}(G) \rightarrow D(G) \rightarrow \mathbf{k}[G] \rightarrow \mathbf{k}.$$

Finally, let $\B$ be a basis for $\mathbf{k}[G]$, and let $\{\delta_u\mid u\in\B\}$ be the dual basis for $\mathscr{O}(G)$. Recall that the element 
\begin{equation}\label{rmatrix}
R:=\sum_{u \in \B} (1\bowtie u) \otimes (\delta_u\bowtie 1)
\end{equation}
is an $R$-matrix for $D(G)$, and $(D(G),R)$ is {\em factorizable} and {\em ribbon}, with ribbon element 
\begin{equation}\label{ribbon}
V:=\sum_{u \in \B} S(\delta_u)\bowtie u.
\end{equation} 

\subsection{The representation category of $D(G)$}\label{sec:The representation category of D(G)} 
Fix a finite group scheme $G$ over $\mathbf{k}$ as in \S\ref{sec:Finite group schemes}. Let ${\rm ad_{\ell}}$ be the {\em left} adjoint action of $G$ on itself, defined by
$${\rm ad_{\ell}}(u)(\tilde{u})=u_1\tilde{u}S(u_2);\quad \forall u,\tilde{u}\in \mathbf{k}[G].$$ 
Let ${\rm C}:=G/{\rm ad_{\ell}}$ be the finite quotient scheme of {\em conjugacy orbits} in $G$ \cite{GS, GS2}, and let ${\rm p}:G\twoheadrightarrow {\rm C}:=G/{\rm ad_{\ell}}$ be the quotient scheme morphism \cite[Chapter 5]{J}. Then ${\rm p}^{\sharp}:\mathscr{O}({\rm C})\xrightarrow{1:1} \mathscr{O}(G)$ is an injective algebra map, and  
$$\im ({\rm p}^{\sharp})=\mathscr{O}(G)^{{\rm co}G}:=\{b\in \mathscr{O}(G)\mid {\rm ad_{\ell}}^{\sharp}(b)=1\ot b\}$$
is a subalgebra of $\mathscr{O}(G)$, so that $\mathscr{O}({\rm C})=\mathscr{O}(G)^{{\rm co}G}$ via ${\rm p}^{\sharp}$.

Given a closed point $g\in G(\mathbf{k})$, consider the scheme morphism
\begin{equation}\label{adg}
{\rm ad}_g:G\to G,\quad f\mapsto {\rm ad_{\ell}}(g,f)=fgf^{-1},
\end{equation}
and let 
\begin{equation}\label{Cg}
C_g:=\im({\rm ad}_g)\subset G
\end{equation}
be the {\em conjugacy orbit} of $g$ \cite{GS, GS2}. Then 
$$\mathbf{k}[C_g]:=\{{\rm ad_{\ell}}(u)(g)\mid u\in \mathbf{k}[G]\}$$
is a {\em subcoalgebra} of $\mathbf{k}[G]$, $C_g(\mathbf{k})\subset G(\mathbf{k})$ is the conjugacy class of $g$ in $G(\mathbf{k})$, and ${\rm C}(\mathbf{k})=\{C_g\mid g\in G(\mathbf{k})\}$. Also, let
\begin{equation}\label{Gg}
G_g:=\{f\in G\mid {\rm ad}_g(f)=g\}
\end{equation}
be the centralizer of $g$ in $G$; it is a subgroup scheme of $G$, so that 
$$\mathbf{k}[G_g]:=\{u\in \mathbf{k}[G]\mid ug=gu\}$$ 
is a {\em Hopf subalgebra} of $\mathbf{k}[G]$. The scheme morphism ${\rm ad}_g$ (\ref{adg}) factors through the finite quotient scheme $G/G_g$, and induces 
a canonical scheme isomorphism
\begin{equation}\label{iotag}
i_g:G/G_g\xrightarrow{\cong} C_g
\end{equation}
(see \cite[Chapter 5]{J} for more details). In other words, the map
$$i_g^{\sharp}:\mathscr{O}(C_g)\to \mathscr{O}(G/G_g)=\mathscr{O}(G)^{G_g}\subseteq \mathscr{O}(G),\quad \langle i_g^{\sharp}(c),u\rangle=\langle c,u_1gS(u_2)\rangle,$$
is an algebra isomorphism.

Recall that $\mathscr{O}(G/G_g)\subseteq \mathscr{O}(G)$ is a {\em left} coideal subalgebra; that is,
$$\Delta\left(\mathscr{O}(G/G_g)\right)\subset \mathscr{O}(G)\ot \mathscr{O}(G/G_g).$$
Let $q_g:=q_{G_g}:\mathscr{O}(G)\twoheadrightarrow \mathscr{O}(G_g)$ be as in (\ref{qL}), and
consider the Hopf-Galois extension
$$0\to \mathscr{O}(G/G_g)\hookrightarrow \mathscr{O}(G)\xrightarrow{q_g}\mathscr{O}(G_g)\to 0,$$
where we view $\mathscr{O}(G)$ as a right $\mathscr{O}(G_g)$-comodule via
$$\rho_g:\mathscr{O}(G)\to \mathscr{O}(G)\ot \mathscr{O}(G_g),\quad b\mapsto b_1\ot q_g(b_2),$$
so that $\mathscr{O}(G/G_g)=\mathscr{O}(G)^{\text{co} \mathscr{O}(G_g)}=\{b\in \mathscr{O}(G)\mid b_1\ot q_g(b_2)=b\ot 1\}\subseteq \mathscr{O}(G)$.
Let
$$\mu_g:=\mu_{G_g}:\mathscr{O}(G_g)\xrightarrow{1:1}\mathscr{O}(G)$$
be a section for $q_g$ as in (\ref{muL}), and let
$$\alpha_g:\mathscr{O}(G)\twoheadrightarrow\mathscr{O}(G/G_g),\quad b\mapsto b_1\mu_g^{-1}(q_g(b_2)),$$
be a retraction for the inclusion map $\mathscr{O}(G/G_g)\subseteq \mathscr{O}(G)$;  
it is $\mathscr{O}(G/G_g)$-linear. Recall that 
$\mu_g$ is a convolution invertible $\mathscr{O}(G/G_g)$-colinear map; that is, $\varepsilon \mu_g = \varepsilon$ and 
\begin{equation}\label{mu-gcolinear}
\mu_g(d_1)\ot d_2=\mu_g(d)_1\ot q_g(\mu_g(d)_2);\quad\forall d\in \mathscr{O}(G_g),
\end{equation}
such that for every $d\in \mathscr{O}(G_g)$, we have
\begin{equation}\label{mu-greta-g}
q_g(\mu_g(d))=d\quad and \quad\alpha_g(\mu_g(d))=\varepsilon(d).
\end{equation}

For each $C\in {\rm C}(\mathbf{k})$ (\ref{Cg}), let $\pi_C:C\twoheadrightarrow C(\mathbf{k})$ and $\gamma_C:C(\mathbf{k})\xrightarrow{1:1} C$ be the scheme morphisms obtained from $\pi:G\twoheadrightarrow G(\mathbf{k})$ and $\gamma:G(\mathbf{k})\xrightarrow{1:1} G$ (\ref{sesG}) by restriction, 
and define the $\mathscr{O}(G)$-linear algebra maps 
\begin{gather*}
\chi_{C}:=\id\ot \gamma_{C}^{\sharp}:\mathscr{O}(G^{\circ})\ot\mathscr{O}(C)\twoheadrightarrow \mathscr{O}(G^{\circ})\ot \mathscr{O}(C(\mathbf{k})),\,\,\,{\rm and}\\
\nu_{C}:=\id\ot \pi_{C}^{\sharp}:\mathscr{O}(G^{\circ})\ot \mathscr{O}(C(\mathbf{k}))\xrightarrow{1:1}\mathscr{O}(G^{\circ})\ot\mathscr{O}(C).
\end{gather*}
(See \cite{GS,GS2} for more details.)

Now let $\mathscr{Z}(G):=\Rep(D(G))$ be the category of finite dimensional $\mathbf{k}$-representations of $D(G)$; it is a finite {\em non-degenerate ribbon} braided tensor category. Recall \cite{Ge} that there is a canonical tensor equivalence 
\begin{equation}\label{the center is gequiv}
{\rm Coh}(G)^{G}\simeq \mathscr{Z}(G),
\end{equation}
where ${\rm Coh}(G)^{G}$ is the category of $G$-equivariant sheaves on $G$ with respect to the {\em right} conjugation action of $G$ on itself (\ref{right adjoint action}). That is, an object of ${\rm Coh}(G)^{G}$ is an object $X\in {\rm Coh}(G)$ equipped with a right $\mathscr{O}(G)$-comodule structure $\rho:X\to X\ot \mathscr{O}(G)$, such that $\rho(a\cdot x)={\rm ad_r}^{\sharp}(a)\cdot \rho(x)$. In particular, $\mathscr{Z}(G)$ is a group scheme-theoretical category \cite{Ge}.

For each $C\in {\rm C}(\mathbf{k})$, let $\mathscr{Z}(G)_{C}\subset \mathscr{Z}(G)$   
be the full Abelian subcategory consisting of all objects annihilated by the defining ideal $\mathscr{I}(C)\subset \mathscr{O}(G)$ of $C$, and let $\overline{\mathscr{Z}(G)_{C}}$ be the {\em Serre closure} of $\mathscr{Z}(G)_{C}$ inside $\mathscr{Z}(G)$; that is, $\overline{\mathscr{Z}(G)_{C}}$ is the full Abelian subcategory of $\mathscr{Z}(G)$ consisting of all objects whose composition factors belong to $\mathscr{Z}(G)_{C}$.

\begin{theorem}\cite[Theorem 8.3]{GS}\label{maings}
The following hold:
\begin{enumerate}
\item
For each $C_g\in {\rm C}(\mathbf{k})$, with representative $g\in C_g(\mathbf{k})$, there is an equivalence of Abelian categories
\begin{equation}\label{funfc}
\mathbf{F}_{C_g}:{\rm Corep}(\mathscr{O}(G_g))\xrightarrow{\simeq} \mathscr{Z}(G)_{C_g},\,\,\,(M,\rho_M)\mapsto \left(\mathscr{O}(C_g)\otimes M,\rho_M^g\right),
\end{equation}
where $\rho_M:M\to \mathscr{O}(G_g)\otimes M$, $m\mapsto \sum m^{(-1)}\ot m^{(0)}$, and 
\begin{equation*}
\begin{split}
& \rho_M^g:\mathscr{O}(C_g)\otimes M\to \mathscr{O}(C_g)\otimes M\otimes \mathscr{O}(G),\\
& \rho_M^g(c\otimes m)= 
\left(i_g^{-1}\right)^{\sharp}\alpha_{g}\left(i_g^{\sharp}(c)_1\mu_g\left(m^{(-1)}\right)_1 \right)\otimes
m^{(0)}\otimes i_g^{\sharp}(c)_2\mu_g\left(m^{(-1)}\right)_2.
\end{split}
\end{equation*}
Here, $\mathscr{O}(G)$ acts on the first factor of $\mathscr{O}(C_g)\otimes M$ via the surjective algebra map $q_{C_g}:\mathscr{O}(G)\twoheadrightarrow \mathscr{O}(C_g)$. 

In particular, the composition functor 
$$\Rep(G)\simeq {\rm Corep}(\mathscr{O}(G))\xrightarrow{\mathbf{F}_{1}}\mathscr{Z}(G)_1\hookrightarrow \mathscr{Z}(G)$$
coincides with the canonical embedding $\Rep(G)\hookrightarrow \mathscr{Z}(G)$ of braided  categories.
\item
For each $C_g\in {\rm C}(\mathbf{k})$, with representative $g\in C_g(\mathbf{k})$, there is a bijection between equivalence classes of simple objects $(M,\rho_M)\in {\rm Corep}(\mathscr{O}(G_g))$ and simple objects of $\mathscr{Z}(G)_{C_g}$, assigning $(M,\rho_M)$ to $\mathbf{F}_{C_g}(M,\rho_M)$. Moreover, we have a direct sum decomposition of Abelian categories
$$\mathscr{Z}(G)=\bigoplus_{C_g\in {\rm C}(k)}\overline{\mathscr{Z}(G)_{C_g}},$$
and
$\overline{\mathscr{Z}(G)_{1}}\subseteq \mathscr{Z}(G)$ is a braided subcategory. In particular, if $G$ is connected then the simples of $\mathscr{Z}(G)$ are precisely those of $\Rep(G)\simeq {\rm Corep}(\mathscr{O}(G))$.
\item
For each simple $(M,\rho_M)\in {\rm Corep}(\mathscr{O}(G_g))$, with projective cover $P_{G_g}(M,\rho_M)$, we have
$$P_{\mathscr{Z}(G)}\left(\mathbf{F}_{C_g}(M,\rho_M)\right)\cong 
\left(\mathscr{O}(G^{\circ})\ot\mathscr{O}(C_g(\mathbf{k}))\ot P_{G_g}(M,\rho_M),R_M^g\right),$$
where $\mathscr{O}(G)$ acts diagonally on the first two factors, and   
$$R_{M}^g:=\left(\chi_{C_g}\ot\id^{\ot 2}\right)\left(\id_{\mathscr{O}(G^{\circ})}\ot\rho_{P_{G_g}(M,\rho_M)}^g\right)\left(\nu_{C_g}\ot\id\right).$$
\item
For each $(M,\rho_M)\in {\rm Corep}(\mathscr{O}(G_g))$, we have 
$${\rm FPdim}\left(\mathbf{F}_{C_g}(M,\rho_M)\right)=|C_g|{\rm dim}_{\mathbf{k}}\left(M\right),\quad\text{and}$$ 
$${\rm FPdim}\left(P_{\mathscr{Z}(G)}\left(\mathbf{F}_{C_g}(M,\rho_M)\right)\right)=[G:G_g(\mathbf{k})]{\rm dim}_{\mathbf{k}}\left(P_{G_g}(M,\rho_M)\right).\qquad\qquad\qquad\qquad\qed$$ 
\end{enumerate}
\end{theorem}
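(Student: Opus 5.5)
This is a result quoted from \cite[Theorem 8.3]{GS}, so within this paper it would simply be invoked. A self-contained proof would go through the equivalence ${\rm Coh}(G)^G\simeq\mathscr{Z}(G)$ of (\ref{the center is gequiv}) and work geometrically, orbit by orbit.

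\textbf{Part (1).} Fix a closed point $g$ and set $C=C_g$. An object of $\mathscr{Z}(G)_{C}$ is a $G$-equivariant sheaf on $G$ (for the conjugation action) that is supported scheme-theoretically on $C$, i.e.\ an object of ${\rm Coh}(C)^G$. The isomorphism $i_g:G/G_g\cong C$ of (\ref{iotag}) identifies $C$ with a homogeneous space. The standard induction equivalence then gives ${\rm Coh}(G/G_g)^G\simeq \Rep(G_g)\simeq{\rm Corep}(\mathscr{O}(G_g))$, with inverse taking the fiber at the point $g$.

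On the Hopf-algebra side, this is the Hopf--Galois/Takeuchi correspondence for the faithfully flat extension $\mathscr{O}(G/G_g)\subseteq\mathscr{O}(G)$. Under it, $M\mapsto \mathscr{O}(G)\square_{\mathscr{O}(G_g)}M$, and this is identified with $\mathscr{O}(G/G_g)\ot M$ via the section $\mu_g$ and the retraction $\alpha_g$. Transporting through $i_g^\sharp$ yields exactly the coaction formula $\rho_M^g$.

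The remaining checks are:
\begin{itemize}
\item equivariance of $\rho_M^g$, using (\ref{mu-gcolinear}) and the $\mathscr{O}(G/G_g)$-linearity of $\alpha_g$;
\item full faithfulness and essential surjectivity, by Takeuchi's theorem.
\end{itemize}
For $g=1$ we have $C_1=1$ and $G_1=G$, so $\mathbf{F}_1$ is the canonical embedding $\Rep(G)\hookrightarrow\mathscr{Z}(G)$.

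\textbf{Part (2).} Every simple $D(G)$-module is annihilated by the radical of $\mathscr{O}(G)$, since that radical is a nilpotent normal ideal. Its $\mathscr{O}(G_{\rm red})$-support is then a $G(\mathbf{k})$-orbit of closed points. This support determines a single component $C$, and the reduced scheme structure forces the module to lie in $\mathscr{Z}(G)_C$. Hence simples biject with the simples of ${\rm Corep}(\mathscr{O}(G_g))$ via part (1).

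The block decomposition comes from the idempotents of $\mathscr{O}({\rm C})$ indexed by ${\rm C}(\mathbf{k})$. These idempotents are central in $D(G)$ because they are coinvariant.

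The braided claim for $\overline{\mathscr{Z}(G)_1}$ follows because the support of a tensor product lies in the product of the supports. In particular, $\overline{\mathscr{Z}(G)_1}$ is closed under $\otimes$.

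\textbf{Part (3).} This is the main obstacle, since projectives do not live in $\mathscr{Z}(G)_C$ but only in its Serre closure, which is a thickening of $C$. I would first show that $\overline{\mathscr{Z}(G)_C}$ is the category of equivariant sheaves on the formal neighborhood $G^\circ\cdot C(\mathbf{k})\cong G^\circ\times C(\mathbf{k})$. I would then build the candidate object by pulling back along $\nu_C$ and $\chi_C$, which produces an equivariant structure $R^g_M$.

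Projectivity would follow from an adjunction: $\Hom$ out of this object is computed as $\Hom_{G_g}(P_{G_g}(M),-)$ applied to the fiber over $g$, which is exact. The object is indecomposable with head $\mathbf{F}_{C_g}(M)$ because its reduction modulo the ideal $\mathscr{I}(C)$ is $\mathbf{F}_{C_g}(P_{G_g}(M))$, which has simple head.

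\textbf{Part (4).} Frobenius--Perron dimensions equal $\mathbf{k}$-dimensions, since $D(G)$ is a Hopf algebra. So
\[
{\rm FPdim}\,\mathbf{F}_{C_g}(M)=\dim\mathscr{O}(C_g)\cdot\dim M=|C_g|\dim M.
\]
For the projective cover, $\dim\bigl(\mathscr{O}(G^\circ)\ot\mathscr{O}(C_g(\mathbf{k}))\bigr)=|G^\circ|\,|C_g(\mathbf{k})|$. This equals $[G:G_g(\mathbf{k})]$, because $|C_g(\mathbf{k})|=[G(\mathbf{k}):G_g(\mathbf{k})]$.
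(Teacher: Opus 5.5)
\textbf{What the paper does.} The paper gives no proof of this statement; it is quoted from \cite[Theorem 8.3]{GS}. Your first sentence is therefore exactly what the paper does.

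\textbf{Gap in part (2).} Your self-contained sketch contains a false step here. You claim that every simple $D(G)$-module is killed by the radical of $\mathscr{O}(G)$, on the grounds that this radical is a nilpotent normal ideal. But that radical is the defining ideal of $G(\mathbf{k})\subseteq G$. In general $G(\mathbf{k})$ is not stable under conjugation by $G^\circ$, so the ideal is not stable under $\coadj$ and does not generate a nilpotent two-sided ideal of $D(G)$.

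The conclusion itself is also false. The module $\mathbf{F}_{C_g}(M)=\mathscr{O}(C_g)\ot M$ is a faithful $\mathscr{O}(C_g)$-module, and $C_g\cong G/G_g$ is non-reduced whenever $G^\circ\not\subseteq G_g$. For example, take $G=\mathbb{G}_{a,1}\rtimes\mathbb{Z}/n$ with $p\nmid n$, $n\ge 2$, and $\mathbb{Z}/n\cong\mu_n(\mathbf{k})$ acting by scaling, and take $1\neq g=\zeta\in\mathbb{Z}/n$. Conjugation gives $bgb^{-1}=((1-\zeta)b,\zeta)$. Hence $C_g=\mathbb{G}_{a,1}g$ and $G_g=\mathbb{Z}/n$, so each $\mathbf{F}_{C_g}(\chi)$ is a $p$-dimensional simple object on which the nilradical acts nontrivially.

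The repair is to use the $G$-stable ideal $\mathscr{I}(C)$ instead. For a simple $X$ in the block $e_C$, the subspace $\mathscr{I}(C)X$ is a $D(G)$-submodule. Moreover, $\mathscr{I}(C)e_C$ is nilpotent, since it vanishes at every closed point of ${\rm Spec}(\mathscr{O}(G)e_C)$, and these points are exactly $C(\mathbf{k})$. Hence $\mathscr{I}(C)X=0$.

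\textbf{Gap in part (3).} The adjunction you propose fails in general, under either reading of ``fiber over $g$''.
\begin{enumerate}
\item Read as the scheme-theoretic fiber $X\mapsto X/\mathfrak{m}_gX$, the functor is only right exact, so it cannot be corepresented. Already for $G=\mathbb{G}_{a,1}$ one has $D(G)\cong\mathbf{k}[t,x]/(t^p,x^p)$, and the projective cover of $\mathbf{k}$ is $D(G)$ itself. Then $\dim\Hom_{D(G)}(D(G),D(G))=p^2$, whereas $\dim\Hom_G(P_G(\mathbf{k}),D(G)/tD(G))=p$.
\item Read as the restriction $X\mapsto e_gX$ to the component $G^\circ g$, viewed as a $G_g$-module, the functor is exact. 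But it is corepresented by $D(G)e_g\ot_{\mathbf{k}[G_g]}P_{G_g}(M)$. This module is free of dimension $|G^\circ|\,|C_g|\dim_{\mathbf{k}}P_{G_g}(M)$, which exceeds the stated $|G^\circ|\,|C_g(\mathbf{k})|\dim_{\mathbf{k}}P_{G_g}(M)$ by the factor $[G^\circ:G_g^\circ]$. In the example above, the stated cover is the $p$-dimensional simple $\mathbf{F}_{C_g}(\chi)$ itself, while the induced module has dimension $p^2$.
\end{enumerate}
So projectivity of the stated object needs a different argument. Once each candidate is shown to be projective and to surject onto $\mathbf{F}_{C_g}(M)$, a standard count finishes the job: your formulas in (4) satisfy $\sum_X{\rm FPdim}(X)\,{\rm FPdim}(P(X))=\dim D(G)=|G|^2$, which forces each candidate to be the projective cover.
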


\section{Hopf algebra quotients of $D(G)$}\label{sec:Hopf algebra quotients of D(G)}

Fix a finite group scheme $G$ over $\mathbf{k}$ as in \S\ref{sec:Finite group schemes}.

\subsection{Construction of Hopf quotients of $D(G)$}\label{sec:Construction of quotients}
Let $K$ be a {\em normal} subgroup scheme of $G$. Let $q_K: \mathscr{O}(G) \twoheadrightarrow \mathscr{O}(K)$ be the corresponding surjective Hopf algebra map, and 
\begin{equation}\label{mu}
\mu_K:\mathscr{O}(K)\xrightarrow{1:1} \mathscr{O}(G)
\end{equation}
be a section for $q_{K}$ as in (\ref{qL})-(\ref{muL}). 
Since $K$ is normal in $G$, the Hopf subalgebra $\mathscr{O}(G/K)$ of $\mathscr{O}(G)$ is stable under the left coadjoint  $\mathbf{k}[G]$-action (\ref{left coadjoint action}), so we obtain a well defined action $*$ of $\mathbf{k}[G]$ on $\mathscr{O}(K)$, given by
\begin{equation}\label{themeasureofog}
\mathbf{k}[G] \otimes \mathscr{O}(K) \to \mathscr{O}(K),\quad u\otimes a\mapsto u * a:= q_K(u \coadj \mu_K(a)),
\end{equation}
which is independent of the choice of $\mu_K$. Indeed, if $\mu_K':\mathscr{O}(K)\to \mathscr{O}(G)$ is another section of $q_K$, then $\mu_K'(a)-\mu_K(a)\in \mathscr{O}(G/K)^+\mathscr{O}(G)$ for every $a\in \mathscr{O}(K)$.  
Since the Hopf ideal $\mathscr{O}(G/K)^+\mathscr{O}(G)$ is stable under the left coadjoint $\mathbf{k}[G]$-action (\ref{left coadjoint action}), we have 
$u \coadj (\mu_K'(a)-\mu_K(a)) \in \mathscr{O}(G/K)^+\mathscr{O}(G)$ for all $u \in \mathbf{k}[G]$. Hence,
$$q_K (u \coadj \mu_K'(a)) = q_K (u \coadj \mu_K(a)),$$ 
so $*$ (\ref{themeasureofog}) is independent of the choice of $\mu_K$, as claimed.

In particular, we have
\begin{equation}\label{piyaction}
u * q_K(b):= q_K(u \coadj b);\quad \forall u\in \mathbf{k}[G],\, b\in \mathscr{O}(G).
\end{equation}

Suppose now that $H$ is another {\em normal} subgroup scheme of $G$, so that we have an exact sequence of Hopf algebras
\begin{equation*}
\mathbf{k}\xrightarrow{\iota_H} \mathbf{k}[H]\to \mathbf{k}[G]\xrightarrow{\pi_H} \mathbf{k}[G/H] \to \mathbf{k},
\end{equation*}
and choose a section $\gamma_H$ as in (\ref{gamma}). Assume further that $H$ and $K$ {\em centralize} each other; that is, $vw=wv$ for every $v\in \mathbf{k}[H]$ and $w \in \mathbf{k}[K]$. Equivalently, 
$${\rm ad_{r}}(v)(w)=S(v_1)wv_2 = \varepsilon(v)w;\quad\forall v\in \mathbf{k}[H],\,w \in \mathbf{k}[K].$$ 
Then for any $v \in \mathbf{k}[H]$ and $b \in \mathscr{O}(G)$, we have for every $w \in \mathbf{k}[K]$,
\begin{eqnarray*}
\lefteqn{\gen{q_K(v \coadj b), w} = \gen{v \coadj b, \iota_K(w)}}\\
& = & \gen{b, {\rm ad_{r}}(v)(\iota_K(w))} = \gen{b, \varepsilon(v)\iota_K(w)}=\gen{\varepsilon(v)q_K(b),w},
\end{eqnarray*} 
so $q_K(v \coadj b) = \varepsilon(v)q_K(b)$.  
Thus, (\ref{themeasureofog}) induces a well defined $\mathbf{k}$-linear map $\cdot$, given by 
\begin{equation}\label{themeasure}
\mathbf{k}[G/H] \otimes \mathscr{O}(K) \to \mathscr{O}(K),\quad x \otimes a\mapsto x\cdot a:=\gamma_H(x)*a =q_K(\gamma_H(x)\coadj \mu_K(a)),
\end{equation}
which is independent of the choice of $\gamma_H$ (as well as $\mu_K$). Indeed, if $\gamma_H':\mathbf{k}[G/H]\to \mathbf{k}[G]$ is another section of $\pi_H$, then for each $x\in \mathbf{k}[G/H]$, we have $\gamma_H'(x)-\gamma_H(x)\in \mathbf{k}[G]\mathbf{k}[H]^+$. But for any $uv$, $u\in \mathbf{k}[G]$ and $v\in \mathbf{k}[H]^+$, we have that 
$$(uv)*a=u*(v*a)=u*(q_K(v \coadj \mu_K(a)))=u*(\varepsilon(v)q_K(\mu_K(a)))=0$$
for every $a\in \mathscr{O}(K)$.  
Therefore, for every $a\in \mathscr{O}(K)$, 
$$\gamma_H'(x)*a = (\gamma_H(x)+\gamma_H'(x)-\gamma_H(x))*a = \gamma_H(x)*a,$$ 
so $\cdot$ (\ref{themeasure}) is independent of the choice of $\gamma_H$, as claimed.

In particular, we have
\begin{equation}\label{piyactiononq}
\pi_H(u)\cdot a=u*a;\quad \forall u\in \mathbf{k}[G],\, a\in \mathscr{O}(K),
\end{equation}
and by (\ref{piyaction}), we have
\begin{equation}\label{piyactiononq0}
\pi_H(u)\cdot q_K(b)=u*q_K(b)= q_K(u \coadj b);\quad \forall  u\in \mathbf{k}[G],\, b\in \mathscr{O}(G).
\end{equation}

\begin{lemma}\label{measure}
The Hopf algebra $\mathbf{k}[G/H]$ acts and measures $\mathscr{O}(K)$ via (\ref{themeasure}); that is, for every $x,y\in \mathbf{k}[G/H]$ and $a,\tilde{a}\in \mathscr{O}(K)$, we have
$$x\cdot(y\cdot a)=(xy)\cdot a,\quad x \cdot 1=\varepsilon(x),\quad \text{and}\quad x \cdot (a\tilde{a})=(x_1 \cdot a)(x_2 \cdot \tilde{a}).$$
\end{lemma}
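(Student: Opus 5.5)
The plan is to reduce all three identities to the corresponding properties of the left coadjoint action $\coadj$ of $\mathbf{k}[G]$ on $\mathscr{O}(G)$. We then transport them along $q_K$ and $\pi_H$ using (\ref{piyactiononq}) and (\ref{piyactiononq0}). Since $\gamma_H$ is a section of $\pi_H$ and $\mu_K$ is a section of $q_K$, we may write any $x\in\mathbf{k}[G/H]$ as $x=\pi_H(u)$ and any $a\in\mathscr{O}(K)$ as $a=q_K(b)$. The independence of choices established just before the lemma then lets us compute $x\cdot a=q_K(u\coadj b)$ freely.

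First I record that $\coadj$ makes $\mathscr{O}(G)$ a $\mathbf{k}[G]$-module algebra. By (\ref{left coadjoint action2}), $\coadj$ is the transpose of the right adjoint action ${\rm ad_r}$ on $\mathbf{k}[G]$. Associativity follows from ${\rm ad_r}(u)\,{\rm ad_r}(u')={\rm ad_r}(u'u)$, read in the appropriate order. The unit condition $u\coadj 1=\varepsilon(u)1$ holds because ${\rm ad_r}(u)$ preserves the counit. Multiplicativity $u\coadj(bb')=(u_1\coadj b)(u_2\coadj b')$ holds because ${\rm ad_r}(u)$ is compatible with the coproduct, using cocommutativity of $\mathbf{k}[G]$: $\Delta(S(u_1)\tilde u u_2)=S(u_1)\tilde u_1u_2\ot S(u_3)\tilde u_2u_4$, which equals $\sum {\rm ad_r}(u_1)(\tilde u_1)\ot{\rm ad_r}(u_2)(\tilde u_2)$ after reordering. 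Alternatively, all of this follows from $D(G)$ being an algebra, because $\coadj$ is conjugation by $\mathbf{k}[G]$ inside $D(G)$, as computed in \S\ref{sec:The Drinfeld double of a finite group scheme}. I will carefully check that the order of composition matches $\coadj$ being a \emph{left} action, i.e.\ $u\coadj(u'\coadj b)=(uu')\coadj b$; this is the one point where the left/right conventions need care.

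Then I transport the three identities. For the action property, take $x=\pi_H(u)$, $y=\pi_H(u')$, $a=q_K(b)$. By (\ref{piyactiononq0}), $y\cdot a=q_K(u'\coadj b)$, so $x\cdot(y\cdot a)=q_K(u\coadj(u'\coadj b))=q_K(uu'\coadj b)=\pi_H(uu')\cdot a=(xy)\cdot a$, since $\pi_H$ is an algebra map. For the unit, $x\cdot 1=q_K(u\coadj 1)=\varepsilon(u)=\varepsilon(x)$. For measuring, write $\tilde a=q_K(b')$ and use that $q_K$ is an algebra map and $\pi_H$ a coalgebra map: $x\cdot(a\tilde a)=q_K(u\coadj bb')=q_K(u_1\coadj b)q_K(u_2\coadj b')=(\pi_H(u_1)\cdot a)(\pi_H(u_2)\cdot\tilde a)=(x_1\cdot a)(x_2\cdot\tilde a)$.

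The main obstacle I expect is only the bookkeeping in verifying that $\coadj$ is a left module-algebra action with the stated conventions. Everything else is formal once the well-definedness of (\ref{themeasure}), already proven, is used to choose arbitrary lifts $u$ and $b$.
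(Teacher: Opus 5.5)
Your proposal is correct and follows the paper's argument: both reduce the three identities to $\coadj$ being a left $\mathbf{k}[G]$-module algebra action on $\mathscr{O}(G)$, together with the independence of (\ref{themeasure}) from the choices of $\gamma_H$ and $\mu_K$. The difference is only in packaging. You work with arbitrary lifts via (\ref{piyactiononq0}) and use that $\pi_H$ is a bialgebra map, whereas the paper uses the specific lifts $\gamma_H(x)$, $\mu_K(a)$ and compares $\gamma_H(xy)$ with $\gamma_H(x)\gamma_H(y)$, and $\Delta(\gamma_H(x))$ with $\gamma_H(x_1)\ot\gamma_H(x_2)$, modulo $\ker\pi_H$; your explicit check of the module-algebra property supplies a step the paper uses without comment.
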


\begin{proof}
First we have $(xy)\cdot a=\gamma_H(xy)*a$, and 
$$x\cdot(y\cdot a)=x\cdot (\gamma_H(y)*a)=\gamma_H(x)*(\gamma_H(y)*a)=(\gamma_H(x)\gamma_H(y))*a,$$
so the first equation follows from $\gamma_H(xy)-\gamma_H(x)\gamma_H(y)\in \mathbf{k}[G]\mathbf{k}[H]^+$.

Second, since $\mu_K(1)=1$ and $\varepsilon(\gamma_H(x))=\varepsilon(x)$, we have for every $x\in \mathbf{k}[G/H]$,
$$x \cdot 1= q_K(\gamma_H(x) \coadj 1) = q_K(\varepsilon(\gamma_H(x))) = \varepsilon(x),$$
as claimed. 

Finally, since we have 
\begin{equation}\label{useful}
\begin{split}
& q_K(\mu_K(a\tilde{a}))=a\tilde{a}=q_K(\mu_K(a)\mu_K(\tilde{a})),\, \text{and}\\
& (\pi_H\otimes \pi_H)(\gamma_H(x)_1\otimes \gamma_H(x)_2)=\Delta(x)=(\pi_H\otimes \pi_H)(\gamma_H(x_1)\otimes \gamma_H(x_2)),
\end{split}
\end{equation}
it follows that
\begin{eqnarray*}
\lefteqn{x \cdot (a\tilde{a})= q_K(\gamma_H(x) \coadj \mu_K(a\tilde{a}))=q_K(\gamma_H(x) \coadj \mu_K(a)\mu_K(\tilde{a}))}\\
& = & q_K(\gamma_H(x)_1 \coadj \mu_K(a))(\gamma_H(x)_2 \coadj \mu_K(\tilde{a}))\\
& = & q_K(\gamma_H(x)_1 \coadj \mu_K(a))q_K(\gamma_H(x)_2 \coadj \mu_K(\tilde{a}))\\
& = & q_K(\gamma_H(x_1) \coadj \mu_K(a))q_K(\gamma_H(x_2) \coadj \mu_K(\tilde{a}))= (x_1 \cdot a)(x_2 \cdot \tilde{a}),
\end{eqnarray*}
as claimed, where we used (\ref{useful}) in the second and fifth equations.
\end{proof}

Now suppose in addition that we have a Hopf algebra map
\begin{equation}\label{B}
B : \mathbf{k}[H] \to \mathscr{O}(K),
\end{equation}
which is {\em $G$-equivariant}, in the sense that 
\begin{equation}\label{Binv}
\pi_H(u)\cdot B(v)=u*B(v)=B({\rm ad_{\ell}}(u)(v));\quad \forall u \in \mathbf{k}[G],\,v \in \mathbf{k}[H].
\end{equation}

\begin{remark}\label{bindgiso}
(1) The $G$-equivariant Hopf algebra map $B$ (\ref{B}) can be viewed as a {\em $G$-invariant bicharacter} $B:H\times K\to \mathbb{G}_m$, in the sense that the Hopf algebra pairing
\begin{equation}\label{Bpairing}
\mathbf{k}[H]\ot \mathbf{k}[K]\to \mathbf{k},\quad v\ot w\mapsto \left\langle B(v),w\right\rangle,
\end{equation}
is $G$-invariant; that is,
\begin{equation}\label{G-invariant}
\left\langle B({\rm ad_{\ell}}(u_1)(v)),{\rm ad_{\ell}}(u_2)(w)\right\rangle=\varepsilon(u)\left\langle B(v),w\right\rangle
\end{equation}
or equivalently, 
$$\langle B(\ad_{\ell}(u)(v)), w\rangle = \langle B(v), \ad_{\ell}(S(u))(w) \rangle$$
for every $u \in \mathbf{k}[G]$, $v \in \mathbf{k}[H]$ and $w \in \mathbf{k}[K]$.

(2) Since the image of $B$ (\ref{B}) is a cocommutative Hopf subalgebra of $\mathscr{O}(K)$, there exists a normal subgroup scheme $L\subseteq K$, such that $K/L$ is commutative, so that we have $\im(B)=\mathscr{O}(K/L)=\mathbf{k}[\left(K/L\right)^{\vee}]$, where $\left(K/L\right)^{\vee}$ is the {\em Cartier dual} of $K/L$ (see, e.g. \cite{GS}). Thus, if ${\rm Ker}(B)=\mathbf{k}[N]^+\mathbf{k}[H]$, $N\subseteq H$ a normal subgroup scheme, then $H/N$ is a commutative group scheme, and $B$ induces a $G$-equivariant group scheme isomorphism $B:H/N\xrightarrow{\cong}\left(K/L\right)^{\vee}$. \qed
\end{remark}

Now let $\eta_H$, $\overline{\sigma}$ and $\overline{\tau}$ be as in (\ref{etafromgamma}), (\ref{defnolsigma}) and (\ref{defnoltau}), respectively, and define the $\mathbf{k}$-linear maps 
$$\sigma:=B\overline{\sigma}\quad\text{and}\quad\tau:=B^{\ot 2}\overline{\tau};$$
that is,
\begin{equation}\label{defnsigma}
\sigma : \mathbf{k}[G/H] \otimes \mathbf{k}[G/H] \to \mathscr{O}(K),\quad 
\sigma(x,y)=B\left(\gamma_H(x_1)\gamma_H(y_1)\gamma_H^{-1}(x_2y_2)\right),
\end{equation}
and
\begin{equation}\label{defntau}
\begin{split}
& \tau : \mathbf{k}[G/H] \to \mathscr{O}(K)\otimes \mathscr{O}(K),\\ 
& \tau(x)=(B\ot B)
\left\{
\eta^{-1}_H(\gamma_H(x)_1)_1\eta_H(\gamma_H(x)_2)\otimes \eta^{-1}_H(\gamma_H(x)_1)_2\eta_H(\gamma_H(x)_3)
\right\},
\end{split}
\end{equation}
and write $\tau(x)=\tau(x)^1\otimes \tau(x)^2$. Note that $\tau(x)^1\otimes \tau(x)^2=\tau(x)^2\otimes \tau(x)^1$, by cocommutativity of $\mathbf{k}[G]$.

\begin{remark}\label{trivial}
(1) By the discussion preceding Remark \ref{newtriv}, $\sigma$ and $\tau$ are well-defined.

(2) By Remark \ref{newtriv}, if $\gamma_H$ is an algebra map then $\sigma$ (\ref{defnsigma}) is trivial, and if 
$\eta_H$ (\ref{etafromgamma}) is a coalgebra map, equivalently if $\gamma_H$ (\ref{gamma}) is a coalgebra map (e.g. if $H=1$, or $G=G(\mathbf{k})$ is constant), then $\tau$ (\ref{defntau}) is trivial.

(3) If $B=1$ then both $\sigma$ (\ref{defnsigma}) and $\tau$ (\ref{defntau}) are trivial. \qed
\end{remark}

\begin{lemma}\label{newlemma}
Set $\pi:=\pi_H$, and $\eta:=\eta_H$. 
For every $u,\tilde{u}\in\mathbf{k}[G]$, the following hold:
\begin{enumerate}
\item
$\sigma(\pi(u),\pi(\tilde{u}))=B\left\{\eta(\gamma(\pi(u))\gamma(\pi(\tilde{u})))\right\}$.
\item
$\tau(\pi(u))=(B\ot B)\left\{\eta^{-1}(u_1)_1\eta(u_2)\ot \eta^{-1}(u_1)_2\eta(u_3)\right\}$.
\end{enumerate}
\end{lemma}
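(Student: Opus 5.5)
Both parts come from unwinding the definitions (\ref{defnsigma}) and (\ref{defntau}). The plan is to reduce everything to identities in $\mathbf{k}[G]$ before applying $B$, using only Lemma \ref{gcphgh}, the colinearity (\ref{gammacolinear}) of $\gamma:=\gamma_H$, and the fact that $\pi$ is a coalgebra map. Throughout I use that $\mathbf{k}[G]$ is cocommutative.

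For (1), put $x=\pi(u)$ and $y=\pi(\tilde u)$. By definition $\sigma(x,y)=B\{\gamma(x_1)\gamma(y_1)\gamma^{-1}(x_2y_2)\}$, and by (\ref{etafromgamma}) we have
$$\eta(\gamma(x)\gamma(y))=(\gamma(x)\gamma(y))_1\,\gamma^{-1}\bigl(\pi((\gamma(x)\gamma(y))_2)\bigr).$$
The first step is to expand $(\gamma(x)\gamma(y))_1\ot(\gamma(x)\gamma(y))_2$ as $\gamma(x)_1\gamma(y)_1\ot\gamma(x)_2\gamma(y)_2$ and use that $\pi$ is an algebra map. The second step is to apply (\ref{gammacolinear}) twice, i.e. $\gamma(x)_1\ot\pi(\gamma(x)_2)=\gamma(x_1)\ot x_2$, and likewise for $y$. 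This turns the expression into $\gamma(x_1)\gamma(y_1)\gamma^{-1}(x_2y_2)=\overline{\sigma}(x,y)$, and applying $B$ gives (1).

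For (2), the task is to show that the element $\eta^{-1}(\gamma(x)_1)_1\eta(\gamma(x)_2)\ot\eta^{-1}(\gamma(x)_1)_2\eta(\gamma(x)_3)$, for $x=\pi(u)$, can be replaced by the same expression with $\gamma(x)$ replaced by $u$. I expect to get there through the decomposition $u=\eta(u_1)\gamma(\pi(u_2))$ of Lemma \ref{gcphgh}(2). Before doing so I would collect three auxiliary facts.
\begin{enumerate}
\item $\eta$ is left $\mathbf{k}[H]$-linear, and $\eta(v\gamma(y))=v\,\varepsilon(y)$ for $v\in\mathbf{k}[H]$, by Lemma \ref{gcphgh}(1).
\item $\eta^{-1}(v w)=\eta^{-1}(w)S(v)$ for $v\in\mathbf{k}[H]$. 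This follows from Lemma \ref{gcphgh}(5) together with $\pi(v)=\varepsilon(v)$.
\item Define $\Phi(w):=\eta^{-1}(w_1)_1\eta(w_2)\ot\eta^{-1}(w_1)_2\eta(w_3)$. Substituting $w=v\gamma(y)$ with $v\in\mathbf{k}[H]$ and using the two facts above, the $v$-factors should cancel: one gets $S(v_1)_1v_2\ot S(v_1)_2v_3=\varepsilon(v)$ by cocommutativity. The upshot is that $\Phi(v\gamma(y))=\varepsilon(v)\Phi(\gamma(y))$.
\end{enumerate}

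The main step is then to apply $\Phi$ to $u=\eta(u_1)\gamma(\pi(u_2))$ and use the third fact, which gives $\Phi(u)=\varepsilon(\eta(u_1))\Phi(\gamma(\pi(u_2)))$. Since $\varepsilon\eta=\varepsilon$, this yields $\Phi(u)=\Phi(\gamma(\pi(u)))$, and applying $B\ot B$ gives (2).

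The main obstacle is the bookkeeping in the third fact. The coproduct of $v\gamma(y)$ splits into three tensor factors, and one has to track how $S(v)$ coming from $\eta^{-1}$ meets $v$ coming from $\eta$ in each tensor slot. I would do this using cocommutativity to reorder the Sweedler indices freely. If this gets messy, I would try a fallback instead: show directly via Lemma \ref{gcphgh}(4),(5) that $\Phi$ factors through $\pi$, by checking that $\Phi(wv)=\Phi(w)\varepsilon(v)$ as well, which holds because $\eta$ is right-insensitive to $H$ modulo conjugation.
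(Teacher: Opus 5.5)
Your proposal is correct and is essentially the paper's proof. Part (1) is the same computation, and in part (2) the paper substitutes $\gamma(\pi(u))=\eta^{-1}(u_1)u_2$ (Lemma~\ref{gcphgh}(4)) and cancels the $\mathbf{k}[H]$-factor $\eta^{-1}(u_1)$ using your first two facts plus cocommutativity. You run the same cancellation in the mirror direction via $u=\eta(u_1)\gamma(\pi(u_2))$. Your third fact holds with $\gamma(y)$ replaced by any $w\in\mathbf{k}[G]$ (the paper uses it with $w=u_2$), so the fallback is not needed.
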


\begin{proof}
(1) For every $x,y\in \mathbf{k}[G/H]$, we have by (\ref{gammacolinear}),
\begin{eqnarray*}
\lefteqn{\eta(\gamma(x)\gamma(y))=\gamma(x)_1\gamma(y)_1\gamma^{-1}(\pi(\gamma(x)_2\gamma(y)_2))}\\
& = & \gamma(x)_1\gamma(y)_1\gamma^{-1}(\pi(\gamma(x)_2)\pi(\gamma(y)_2))=
\gamma(x_1)\gamma(y_1)\gamma^{-1}(x_2y_2),
\end{eqnarray*}
so the claim follows.

(2) Since $\gamma(\pi(u))=\eta^{-1}(u_1)u_2$ by Lemma \ref{gcphgh}(4), we have
$$\gamma(\pi(u))_1\ot \gamma(\pi(u))_2\ot \gamma(\pi(u))_3=\eta^{-1}(u_1)_1u_2\ot \eta^{-1}(u_1)_2u_3\ot \eta^{-1}(u_1)_3u_4.$$
Thus, we have
\begin{eqnarray*}
\lefteqn{\tau(\pi(u))= B^{\ot 2}
\{
\eta^{-1}(\gamma(\pi(u))_1)_1\eta(\gamma(\pi(u))_2)\otimes \eta^{-1}(\gamma(\pi(u))_1)_2\eta(\gamma(\pi(u))_3)
\}}\\
& = & B^{\ot 2}
\{
\eta^{-1}(\eta^{-1}(u_1)_1u_2)_1\eta(\eta^{-1}(u_1)_2u_3)\otimes \eta^{-1}(\eta^{-1}(u_1)_1u_2)_2\eta(\eta^{-1}(u_1)_3u_4)
\}\\
& = & B^{\ot 2}
\{\eta^{-1}(u_2)_1S(\eta^{-1}(u_1)_1)
\eta^{-1}(u_1)_2\eta(u_3)\ot
\eta^{-1}(u_2)_2S(\eta^{-1}(u_1)_3)
\eta^{-1}(u_1)_4\eta(u_4)\}\\
& = & B^{\ot 2}\{\eta^{-1}(u_2)_1\varepsilon(\eta^{-1}(u_1)_1)
\eta(u_3)\ot\eta^{-1}(u_2)_2S(\eta^{-1}(u_1)_2)
\eta^{-1}(u_1)_3\eta(u_4)\}\\
& = & B^{\ot 2}\{\eta^{-1}(u_2)_1\eta(u_3)\ot
\eta^{-1}(u_2)_2S(\eta^{-1}(u_1)_1)
\eta^{-1}(u_1)_2\eta(u_4)\}\\
& = & B^{\ot 2}\{\eta^{-1}(u_2)_1\eta(u_3)\ot
\eta^{-1}(u_2)_2\varepsilon(\eta^{-1}(u_1))\eta(u_4)\}\\
& = & B^{\ot 2}\{\eta^{-1}(u_2)_1\eta(u_3)\ot
\eta^{-1}(u_2)_2\varepsilon(u_1)\eta(u_4)\}=  B^{\ot 2}
\{\eta^{-1}(u_1)_1\eta(u_2)\ot\eta^{-1}(u_1)_2\eta(u_3)\},
\end{eqnarray*}
as claimed.
\end{proof}

\begin{theorem}\label{quotientha}
Let $K,H\subseteq G$ be commuting normal subgroup schemes of $G$, and let $\cdot$, $B$, $\sigma$, and $\tau$ be as in (\ref{themeasure}), (\ref{B}),   (\ref{defnsigma}) and (\ref{defntau}), respectively.
Then there exists a pair $\left(D(K,H,B),\theta\right)$, such that the following hold:
\begin{enumerate}
\item
 $$D(K,H,B)=\mathscr{O}(K)^{{\rm cop}} \#^{\tau}_{\sigma} \mathbf{k}[G/H]$$
is a Hopf algebra with multiplication given by  
\begin{equation*}
(a \# x)(\tilde{a} \# \tilde{x}) =  a(x_1 \cdot \tilde{a}) \sigma(x_2, \tilde{x}_1) \# x_3 \tilde{x}_2,
\end{equation*}
comultiplication map $\delta$ given by  
\begin{equation*}
\delta(a\# x)=\left(a_2\tau(x_1)^1\# x_2\right)\otimes \left(a_1\tau(x_1)^2\# x_3\right),
\end{equation*}
and antipode map $S$ given by  
\begin{equation*}
S(a\# x)=(1\# S(x))(S(a)\# 1).
\end{equation*}
\item
$$\theta:D(G)\to D(K,H,B),\quad b\bowtie u\mapsto q_K(b)B(\eta_H(u_1))\# \pi_H(u_2),$$ is a surjective Hopf algebra map whose kernel is the ideal generated by $\mathscr{O}\left(G/K\right)^+$ and the vector subspace
$\left\{\mu_K(B(v))\bowtie 1 - 1\bowtie v\mid v\in \mathbf{k}[H]\right\}$.
\end{enumerate}
\end{theorem}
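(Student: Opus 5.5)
Rather than checking the Hopf axioms for $\mathscr{O}(K)^{\rm cop}\#^\tau_\sigma\mathbf{k}[G/H]$ directly, I would build $D(K,H,B)$ as a quotient of $D(G)$ and then transport the structure. Let $I\subseteq D(G)$ be the ideal generated by $\mathscr{O}(G/K)^+$ (identified with $\mathscr{O}(G/K)^+\bowtie 1$) and the elements $\mu_K(B(v))\bowtie 1-1\bowtie v$ for $v\in\mathbf{k}[H]$. The plan has three steps.

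\textbf{Step 1: $I$ is a Hopf ideal.} Since $K$ is normal, $\mathscr{O}(G/K)$ is a normal Hopf subalgebra of $\mathscr{O}(G)$, stable under $\coadj$. Hence $D(G)\mathscr{O}(G/K)^+=\mathscr{O}(G/K)^+D(G)$ is a Hopf ideal, and modulo it $D(G)$ becomes $\mathscr{O}(K)^{\rm cop}\bowtie\mathbf{k}[G]$ with the action $*$ of (\ref{themeasureofog}). In this quotient, write $z_v:=B(v)\bowtie 1-1\bowtie v$. One checks three facts.
\begin{itemize}
\item Comultiplication: $\Delta(z_v)=z_{v_2}\otimes(B(v_1)\bowtie1)+(1\bowtie v_2)\otimes z_{v_1}$. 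This uses that $B$ is a coalgebra map, that $\mathbf{k}[H]$ is cocommutative, and the ${\rm cop}$ structure on $\mathscr{O}(K)$.
\item Counit and antipode: $\varepsilon(z_v)=0$ and $S(z_v)\in I$, because $B$ commutes with antipodes.
\item Two-sidedness: the span of the $z_v$ is closed under conjugation. By $G$-equivariance (\ref{Binv}), $(1\bowtie u_1)z_v(1\bowtie S(u_2))=z_{{\rm ad}_\ell(u)(v)}$. Moreover, $1\bowtie v$ commutes with $\mathscr{O}(K)\bowtie 1$ because $H$ centralizes $K$, so $q_K(v\coadj b)=\varepsilon(v)q_K(b)$ as shown above. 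Also $B(v)\bowtie1$ commutes with $\mathscr{O}(K)$ because $\mathscr{O}(K)$ is commutative.
\end{itemize}
So the left ideal generated by the $z_v$ is already two-sided.

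\textbf{Step 2: the quotient has the right dimension.} The quotient $D(G)/I$ is a Hopf algebra. In it, $1\bowtie v$ equals $B(v)\bowtie1$, so every element $b\bowtie u$ reduces to $q_K(b)B(\eta_H(u_1))\bowtie\gamma_H(\pi_H(u_2))$, using $u=\eta(u_1)\gamma(\pi(u_2))$ from Lemma \ref{gcphgh}(2). Hence $\dim D(G)/I\le|K|\,[G:H]$.

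For the reverse inequality, I would define the linear map $\theta$ of the statement and show directly that it is surjective and kills the generators of $I$. Surjectivity holds because $\theta(\mu_K(a)\bowtie\gamma_H(x))=a\#x$, using $\eta\gamma=\varepsilon$ from Lemma \ref{gcphgh}(1). Killing the generators follows from $\eta_H(v_1)\otimes\pi_H(v_2)=v\otimes1$ for $v\in\mathbf{k}[H]$. A map with these properties forces $\dim D(G)/I=|K|[G:H]$ and makes $\theta$ induce a linear isomorphism $\bar\theta:D(G)/I\to\mathscr{O}(K)\otimes\mathbf{k}[G/H]$, with inverse $a\#x\mapsto[\mu_K(a)\bowtie\gamma_H(x)]$.

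\textbf{Step 3: transport the Hopf structure.} Transport the structure of $D(G)/I$ along $\bar\theta$ and verify that it gives the formulas in (1). For the product, multiply $(\mu_K(a)\bowtie\gamma(x))(\mu_K(\tilde a)\bowtie\gamma(\tilde x))$ to get $\mu_K(a)(\gamma(x)_1\coadj\mu_K(\tilde a))\bowtie\gamma(x)_2\gamma(\tilde x)$, then apply $\theta$. This uses (\ref{gammacolinear}) and (\ref{piyactiononq0}), and Lemma \ref{newlemma}(1) converts $B\eta(\gamma(x)\gamma(\tilde x))$ into $\sigma$. For the coproduct, compute $\Delta(\mu_K(a)\bowtie\gamma(x))$ in $D(G)$ and apply $\theta\otimes\theta$. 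Colinearity of $\mu_K$ gives the $a_2\otimes a_1$ part. Applying $\Delta$ to $\gamma(x)=\eta^{-1}(\gamma(x)_1)\gamma(x)_2$ and then Lemma \ref{newlemma}(2) produces the $\tau(x_1)$ factors. The antipode formula follows from $S(b\bowtie u)=(1\bowtie S(u))(S(b)\bowtie1)$ in $D(G)$.

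Since $\theta$ is then the composite $D(G)\to D(G)/I\xrightarrow{\bar\theta}D(K,H,B)$, it is a surjective Hopf map with kernel $I$. This proves (2), and (1) follows because the transported structure is a Hopf algebra.

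\textbf{Main obstacle.} The hard part is Step 3, specifically matching the coproduct with the $\tau$ formula. There the interaction between $\eta_H$ (not a coalgebra map), $B$, and the ${\rm cop}$ ordering must be tracked carefully. I would try first to reduce everything to Lemma \ref{newlemma}(2) via $\theta(1\bowtie u)=B(\eta(u_1))\#\pi(u_2)$, so that the computation becomes a formal identity in $\mathbf{k}[G]$.
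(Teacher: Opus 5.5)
Your route differs from the paper's. The paper checks directly that $\theta$ is surjective and intertwines the formula-defined product and coproduct on all of $D(G)$, via (\ref{p1}) and (\ref{p2}). Multiplicativity of $\theta$ then gives two things at once: the Hopf axioms on $\mathscr{O}(K)\otimes\mathbf{k}[G/H]$, and the inclusion $I\subseteq\ker\theta$. The dimension count finishes. You instead show $I$ is a Hopf ideal and transport the structure of $D(G)/I$ along $\bar\theta$.

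\textbf{The gap is in your Step 2.} At that point $\theta$ is only a linear map. A surjective linear map that kills the generators of an ideal need not kill the ideal. So neither $\dim D(G)/I\ge |K|[G:H]$ nor the existence of $\bar\theta$ follows. Your Step 3 computations on representatives $\mu_K(a)\bowtie\gamma_H(x)$ already presuppose $\bar\theta$, so they cannot fill this.

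The repair is short and uses your own Step 1. Set $J:=\mathscr{O}(G/K)^+\mathscr{O}(G)\bowtie\mathbf{k}[G]$ and $r_v:=\mu_K(B(v))\bowtie 1-1\bowtie v$. Since the left ideal generated by the $z_v$ is two-sided modulo $J$, you have $I=J+D(G)\,{\rm span}\{r_v\}$. Clearly $\theta(J)=0$. For $v\in\mathbf{k}[H]$ one has $\eta_H(uv)=u_1v\gamma_H^{-1}(\pi_H(u_2))={\rm ad}_{\ell}(u_1)(v)\,\eta_H(u_2)$. Hence, by (\ref{Binv}), both $\theta((b\bowtie u)(1\bowtie v))$ and $\theta((b\bowtie u)(\mu_K(B(v))\bowtie 1))$ equal $q_K(b)(u_1*B(v))B(\eta_H(u_2))\#\pi_H(u_3)$.

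With this inserted, your approach works, and it buys something. The Hopf axioms come for free from $D(G)/I$. The product check reduces to representatives, where Lemma \ref{newlemma}(1) and (\ref{piyactiononq0}) suffice, so Lemma \ref{gcphgh}(3) is never needed.

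\textbf{The antipode.} You should not claim the displayed antipode formula ``follows'' from $S(b\bowtie u)=(1\bowtie S(u))(S(b)\bowtie 1)$. Transporting actually gives
$$S(a\#x)=\theta(1\bowtie S(\gamma_H(x)))(S(a)\#1)=(B(\eta_H(S(\gamma_H(x_1))))\#S(x_2))(S(a)\#1).$$
This equals $(1\#S(x))(S(a)\#1)$ only when $B\eta_H S\gamma_H=\varepsilon$, for example when $S\gamma_H=\gamma_H S$.

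It can fail. Take ${\rm char}(\mathbf{k})\neq 2$, $G=\mathbb{Z}/4=\langle g\rangle$, $H=K=\{1,g^2\}$, $B(g^2)=\chi$ the nontrivial character of $K$, and $\gamma_H(\bar g)=g$. Then $t:=1\#\bar g=\theta(1\bowtie g)$ is grouplike with $t^2=\sigma(\bar g,\bar g)\#1=\chi\#1\neq 1$. Therefore
$$S(t)=t^{-1}=\theta(1\bowtie g^3)=\chi\#\bar g\neq 1\#S(\bar g).$$
The paper's proof only checks (\ref{p1}), (\ref{p2}) and surjectivity, and never checks the antipode. So this is a defect in the displayed formula rather than in your strategy. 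Your argument proves the corrected formula above, not the stated one.
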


\begin{proof}
It suffices to show that $\theta$ is a surjective map, such that
\begin{equation}\label{p1}
\theta\left((b\bowtie u)(\tilde{b}\bowtie \tilde{u})\right)=\theta(b\bowtie u)\theta(\tilde{b}\bowtie \tilde{u}),\quad\text{and}
\end{equation} 
\begin{equation}\label{p2}
\theta(b_2\bowtie u_1) \otimes \theta(b_1\bowtie u_2)=\delta(\theta(b\bowtie u))
\end{equation}
for every $b,\tilde{b} \in \mathscr{O}(G)$ and $u,\tilde{u} \in \mathbf{k}[G]$.
 
Set $\pi:=\pi_H$, $\gamma:=\gamma_H$, $\eta:=\eta_H$, $\mu:=\mu_K$, and $q:=q_K$.
 
First we verify (\ref{p1}). On one hand, we have 
\begin{eqnarray*}
\lefteqn{\theta\left((b\bowtie u)(\tilde{b}\bowtie \tilde{u})\right) = \theta(b(u_1 \coadj \tilde{b})\bowtie u_2\tilde{u}) = q(b(u_1 \coadj \tilde{b}))B(\eta(u_2\tilde{u}_1)) \# \pi(u_3 \tilde{u}_2)}\\
& = & q(b(u_1 \coadj \tilde{b}))B\left\{\eta(u_2)\gamma(\pi(u_3))_1\eta(\tilde{u}_1)S(\gamma(\pi(u_3))_2)\eta\left\{\gamma(\pi(u_3))_3\gamma(\pi(\tilde{u}_2))\right\}\right\} \# \pi(u_4 \tilde{u}_3)\\
& = & q(b(u_1 \coadj \tilde{b}))B\left\{\eta(u_2)\left\{{\rm ad_{\ell}}(\gamma(\pi(u_3))_1)(\eta(\tilde{u}_1))\right\}\eta\left\{\gamma(\pi(u_3))_2\gamma(\pi(\tilde{u}_2))\right\}\right\} \# \pi(u_4 \tilde{u}_3)\\
& = & q(b(u_1 \coadj \tilde{b}))B(\eta(u_2))\{\gamma(\pi(u_3))_1 * B(\eta(\tilde{u}_1))\}B\left\{\eta\left\{\gamma(\pi(u_3))_2\gamma(\pi(\tilde{u}_2))\right\}\right\} \# \pi(u_4 \tilde{u}_3)\\
& = & q(b(u_1 \coadj \tilde{b}))B(\eta(u_2))\{\pi(\gamma(\pi(u_3))_1)\cdot B(\eta(\tilde{u}_1))\}B\left\{\eta\left\{\gamma(\pi(u_3))_2\gamma(\pi(\tilde{u}_2))\right\}\right\} \# \pi(u_4 \tilde{u}_3)\\
& = & q(b(u_1 \coadj \tilde{b}))B(\eta(u_2))\{\pi(u_3)_1\cdot B(\eta(\tilde{u}_1))\}B\left\{\eta\left\{\gamma(\pi(u_3)_2)\gamma(\pi(\tilde{u}_2))\right\}\right\} \# \pi(u_4 \tilde{u}_3)\\
& = & q(b(u_1 \coadj \tilde{b}))B(\eta(u_2))\{\pi(u_3)\cdot B(\eta(\tilde{u}_1))\}B\left\{\eta\left\{\gamma(\pi(u_4))\gamma(\pi(\tilde{u}_2))\right\}\right\} \# \pi(u_5 \tilde{u}_3),
\end{eqnarray*} 
where we used Lemma \ref{gcphgh}(3) in the third equality, (\ref{Binv}) in the fifth one, and (\ref{gammacolinear}) in the seventh one. 

On the other hand, we have
\begin{eqnarray*}
\lefteqn{\theta(b\bowtie u)\theta(\tilde{b}\bowtie \tilde{u})}\\ 
& = & \{q(b)B(\eta(u_1))\#\pi(u_2)\}\{q(\tilde{b})B(\eta(\tilde{u}_1))\#\pi(\tilde{u}_2) \}\\ 
& = & q(b)B(\eta(u_1))\{\pi(u_2)\cdot (q(\tilde{b})B(\eta(\tilde{u}_1)))\}\sigma(\pi(u_3),\pi(\tilde{u}_2))\#\pi(u_4\tilde{u}_3)\\ 
& = & q(b)B(\eta(u_1))\{(\pi(u_2)\cdot q(\tilde{b}))(\pi(u_3)\cdot B(\eta(\tilde{u}_1)))\}\sigma(\pi(u_4),\pi(\tilde{u}_2))\#\pi(u_5\tilde{u}_3)\\ 
& = & q(b(u_1 \coadj \tilde{b}))B(\eta(u_2))\{\pi(u_3)\cdot B(\eta(\tilde{u}_1))\}\sigma(\pi(u_4),\pi(\tilde{u}_2))\#\pi(u_5\tilde{u}_3)\\ 
& = & q(b(u_1 \coadj \tilde{b}))B(\eta(u_2))\{\pi(u_3)\cdot B(\eta(\tilde{u}_1))\}B\left\{\eta\left\{\gamma(\pi(u_4))\gamma(\pi(\tilde{u}_2))\right\}\right\} \# \pi(u_5 \tilde{u}_3),
\end{eqnarray*}
as desired, where we used Lemma \ref{measure} in the third equality, (\ref{piyactiononq}) and cocommutativity of $\mathbf{k}[G]$ in the fourth one, and Lemma \ref{newlemma}(1) in the last one.

Next we verify (\ref{p2}). On one hand, we have
\begin{eqnarray*}
\lefteqn{(\theta\ot \theta)\Delta(b\bowtie u)=\theta(b_2\bowtie u_1) \otimes \theta(b_1\bowtie u_2)}\\
& = & (q(b_2)B(\eta(u_1))\#\pi(u_2))\otimes (q(b_1)B(\eta(u_3))\#\pi(u_4))).
\end{eqnarray*}
On the other hand, we have
\begin{eqnarray*}
\lefteqn{\delta(\theta(b\bowtie u))=\delta(q(b)B(\eta(u_1))\#\pi(u_2))}\\
& = & \left(q(b_2)B(\eta(u_1)_2)\tau(\pi(u_2))^1\# \pi(u_3)\right)
\otimes \left(q(b_1)B(\eta(u_1)_1)\tau(\pi(u_2))^2\# \pi(u_4)\right)\\
& = & \left(q(b_2)B(\eta(u_1)_2\eta^{-1}(u_2)_1\eta(u_3))\# \pi(u_4)\right)
\otimes \left(q(b_1)B(\eta(u_1)_1\eta^{-1}(u_2)_2\eta(u_5))\# \pi(u_6)\right)\\
& = & \left(q(b_2)B(\eta(u_1)_1\eta^{-1}(u_2)_1\eta(u_3))\# \pi(u_4)\right)
\otimes \left(q(b_1)B(\eta(u_1)_2\eta^{-1}(u_2)_2\eta(u_5))\# \pi(u_6)\right)\\
& = & \left(q(b_2)B((\eta(u_1)\eta^{-1}(u_2))_1\eta(u_3))\# \pi(u_4)\right)
\otimes \left(q(b_1)B((\eta(u_1)\eta^{-1}(u_2))_2\eta(u_5))\# \pi(u_6)\right)\\
& = & \left(q(b_2)B(\eta(u_1))\# \pi(u_2)\right)
\otimes \left(q(b_1)B(\eta(u_3))\# \pi(u_4)\right),
\end{eqnarray*}
as desired, where we used Lemma \ref{newlemma}(2) in the second equation and the cocommutativity of $\mathbf{k}[G]$ in the last equation.
  
Finally we verify that $\theta$ is surjective. Clearly, $\theta(b\bowtie 1) = q(b) \# 1$ for any $b \in \mathscr{O}(G)$. Also for any $u \in \mathbf{k}[G]$, using the cocommutativity of $\mathbf{k}[G]$, we have
\begin{eqnarray*}
\lefteqn{\theta\left(\mu(B(\eta^{-1}(u_1)))\bowtie u_2\right) = q\left(\mu(B(\eta^{-1}(u_1)))\right)B(\eta(u_2))\# \pi(u_3)}\\
& = & B(\eta^{-1}(u_1))B(\eta(u_2))\# \pi(u_3)=B(\eta^{-1}(u_1)\eta(u_2))\# \pi(u_3)\\
& = & B(\varepsilon(u_1)) \# \pi(u_2) = 1 \# \pi(u). 
\end{eqnarray*}
Thus, $\theta$ is surjective, as claimed.

Finally, let $I$ be the ideal of $D(G)$ generated by $\mathscr{O}(G/K)^+$ and the subspace 
$$\left\{\mu(B(v))\bowtie 1 - 1\bowtie v\mid v\in \mathbf{k}[H]\right\}.$$ 
Since $\theta|_{\mathscr{O}(K)} = q$, we clearly have $\mathscr{O}(G/K)^+ \subseteq \ker \theta$. Moreover, for every $v \in \mathbf{k}[H]$, we have $\theta(\mu(B(v)) \bowtie 1) = B(v) \# 1 = \theta(1 \bowtie v)$. Thus, $I \subseteq \ker \theta$.
	
On the other hand, quotienting $D(G)$ by the ideal generated by $\mathscr{O}(G/K)^+$ identifies the $\mathscr O(G)$--factor with $\mathscr{O}(K)$, hence every class in $D(G)/I$ has a representative of the form $a\bowtie u$ with $a\in\mathscr O(K)$ and $u\in \mathbf{k}[G]$. Moreover, since $\mu(B(v))\bowtie 1 = 1\bowtie v$ for every $v\in \mathbf{k}[H]$,  the class of $a\bowtie u$ in $D(G)/I$ depends only on the image of $u$ in the quotient Hopf algebra $\mathbf{k}[G/H]$. It follows that $D(G)/I$ is spanned by the elements represented by $a\bowtie \gamma(x)$ with $a\in\mathscr{O}(K)$ and $x \in \mathbf{k}[G/H]$. Consequently, $\dim \left(D(G)/I\right) \le \dim \left(\mathscr{O}(K)\right)\dim \left(\mathbf{k}[G/H]\right)$. Hence, $\dim (I) \ge \dim (\ker \theta)$. 

Thus, $I=\ker \theta$, as claimed.
\end{proof}

\begin{remark}\label{trivialtaubar}
It follows from Theorem \ref{quotientha} that $\sigma$ (\ref{defnsigma}) is a {\em cocycle}, $\tau$ (\ref{defntau}) is a {\em co-cocycle}, and $\mathbf{k}[G]$ {\em co-measures} $\mathscr{O}(K)$ trivially (see e.g., \cite{AD,M} for the necessary definitions). \qed
\end{remark}

\begin{proposition}\label{genproprs}
For each Hopf algebra $D(K,H,B)$, the following hold:
\begin{enumerate}
\item
$D(K,H,B)$ is involutive, with $\dim_{\mathbf{k}}(D(K,H,B))=|K|[G:H]$.
\item
The map 
$$\mathscr{O}(K)^{{\rm cop}}\xrightarrow{1:1} D(K,H,B),\quad a\mapsto a\# 1,$$ 
is an injective Hopf algebra map.
\item
The map 
$$D(K,H,B)\twoheadrightarrow \mathbf{k}[G/H],\quad a\# x\mapsto \varepsilon(a)x,$$
is a surjective Hopf algebra map whose kernel is the ideal generated by $\mathscr{O}(K)^+$.
\item
If $\eta_H$ (\ref{etafromgamma}) is a coalgebra map, then $D(K,H,B)=\mathscr{O}(K)^{{\rm cop}}\otimes \mathbf{k}[G/H]$ is a tensor product coalgebra.
\item
If $B=1$, then $D(K,H):=D(K,H,1)=\mathscr{O}(K)^{{\rm cop}}\# \mathbf{k}[G/H]$ is a smash product algebra, and $D(K,H)=\mathscr{O}(K)^{{\rm cop}}\otimes \mathbf{k}[G/H]$ is a tensor product coalgebra.
\item
The map 
\begin{equation*}
\bar{B}:=B^*\circ S:\mathbf{k}[K]\to \mathscr{O}(H)
\end{equation*}
is a $G$-equivariant Hopf algebra map, so we have the corresponding Hopf algebra quotient $D(H,K,\bar{B})$ of $D(G)$. 
\item
If $K$ is commutative then $D(K,H,B)=\mathbf{k}[\widetilde{G}]$, where $\widetilde{G}$ is the group scheme extension of $G/H$ by $K^{\vee}$ corresponding to 
$$\sigma\in Z^2(G/H,K^{\vee})\quad\text{and}\quad\tau\in Z^1(G/H,(K^2)^{\vee}),$$
where $G/H$ acts trivially on $K^{\vee}$ and $(K^2)^{\vee}$.
\end{enumerate}
\end{proposition}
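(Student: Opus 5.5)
The guiding principle is to read every item off the surjection $\theta:D(G)\twoheadrightarrow D(K,H,B)$ of Theorem \ref{quotientha}, together with the explicit multiplication, comultiplication and antipode formulas for $D(K,H,B)$.

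For (1), involutivity is inherited: $D(G)$ is involutive, and $\theta$ is a surjective Hopf map, so $S^2\theta=\theta S^2=\theta$ and hence $S^2=\id$ on $D(K,H,B)$. The dimension count is $\dim\mathscr{O}(K)\cdot\dim\mathbf{k}[G/H]=|K|[G:H]$, since $D(K,H,B)$ is the tensor space $\mathscr{O}(K)\ot\mathbf{k}[G/H]$.

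For (2), the map $a\mapsto a\#1$ equals $\theta$ restricted to $\mathscr{O}(G)^{\rm cop}$ composed through $q_K$. Indeed, $\theta(b\bowtie1)=q_K(b)\#1$, because $\eta_H(1)=1$ and $B(1)=1$. Since $q_K$ is surjective and $\theta$ is a Hopf map, $a\mapsto a\#1$ is a Hopf map, and it is clearly injective.

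For (3), I will compose $\theta$ with the projection. Concretely, $D(G)\to\mathbf{k}[G]\xrightarrow{\pi_H}\mathbf{k}[G/H]$ (via (\ref{qkg})) kills $\ker\theta$. This holds because it kills $\mathscr{O}(G/K)^+$ and sends $\mu_K(B(v))\bowtie1-1\bowtie v$ to $\varepsilon(v)-\pi_H(v)=0$. So it factors through a Hopf map $D(K,H,B)\to\mathbf{k}[G/H]$, which one checks is $a\#x\mapsto\varepsilon(a)x$ using $\varepsilon B\eta_H=\varepsilon$. Its kernel contains $D(K,H,B)\mathscr{O}(K)^+$. For equality, the quotient by that ideal is spanned by the elements $1\#x$, so a dimension count gives the result.

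For (4), when $\eta_H$ is a coalgebra map $\tau$ is trivial by Remark \ref{trivial}(2), and the stated $\delta$ becomes the tensor product coalgebra structure of $\mathscr{O}(K)^{\rm cop}\ot\mathbf{k}[G/H]$. For (5), $B=1$ makes $\sigma$ and $\tau$ trivial by Remark \ref{trivial}(3), giving a smash product and a tensor coalgebra.

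For (6), I will check three things.
\begin{itemize}
\item $B^*:\mathscr{O}(K)^*=\mathbf{k}[K]\to\mathscr{O}(H)$ is a Hopf map, being dual to one.
\item Composing with $S$ is again a Hopf map, since all algebras involved are commutative or cocommutative, so $S$ is a Hopf (anti-)automorphism compatible with everything.
\item $G$-equivariance of $\bar B$ is equivalent to $G$-invariance of the pairing, by (\ref{G-invariant}). The pairing for $\bar B$ is $\langle\bar B(w),v\rangle=\langle B(v),S(w)\rangle$, and invariance under the diagonal $\ad_\ell$-action is preserved because $S$ commutes with $\ad_\ell$ in the cocommutative $\mathbf{k}[G]$.
\end{itemize}
It remains to translate invariance back into (\ref{Binv}) for $\bar B$, using (\ref{left coadjoint action2}).

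For (7), $K$ commutative means $\mathscr{O}(K)$ is cocommutative, so $\coadj$ restricted to $K$, and hence the action $\cdot$, is trivial. More precisely, $q_K(u\coadj\mu_K(a))$ pairs with $w\in\mathbf{k}[K]$ to give $\langle\mu_K(a),\ad_r(u)(w)\rangle$. This does not vanish merely from commutativity of $K$, since $G$ may act nontrivially on $K$. Settling whether the action is really trivial is the main obstacle. I would first verify, via $G$-equivariance of $B$, that the statement intends the action on $K^\vee$ induced from conjugation, and adjust accordingly. Given this, $D(K,H,B)$ is a commutative-by-cocommutative cocommutative Hopf algebra. Cocommutativity follows from the formula for $\delta$: $\mathscr{O}(K)^{\rm cop}$ is cocommutative and $\tau$ is symmetric, as noted after (\ref{defntau}). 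A finite-dimensional cocommutative Hopf algebra is $\mathbf{k}[\widetilde G]$ for a finite group scheme $\widetilde G$. The extension $1\to K^\vee\to\widetilde G\to G/H\to1$ comes from (2) and (3), and its classes are $(\sigma,\tau)$ by the standard correspondence of \cite{AD,M} via Remark \ref{trivialtaubar}.
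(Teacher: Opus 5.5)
Your treatment of (1)--(6) is correct and agrees in substance with the paper, which dismisses these items as straightforward or as immediate from Remark \ref{trivial}. The differences are presentational. You transport structure through the surjection $\theta$, for instance $S^2\theta=\theta S^2$ for involutivity, and you obtain the projection in (3) as the map induced by $b\bowtie u\mapsto\varepsilon(b)\pi_H(u)$. The paper instead checks the counit identities $\varepsilon(x\cdot a)=\varepsilon(x)\varepsilon(a)$, $\varepsilon(\sigma(x,\tilde{x}))=\varepsilon(x)\varepsilon(\tilde{x})$ and $(\varepsilon\otimes\varepsilon)(\tau(x))=\varepsilon(x)1\otimes 1$ directly on the crossed product. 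In (6) you spell out what the paper compresses into ``$\mathbf{k}[H]$ is cocommutative''. In (7), your key step is exactly the paper's argument: cocommutativity of $\mathscr{O}(K)$, symmetry of $\tau$ and cocommutativity of $\mathbf{k}[G/H]$ make $\delta$ cocommutative, so $D(K,H,B)=\mathbf{k}[\widetilde{G}]$.

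Your doubt about the clause ``$G/H$ acts trivially on $K^{\vee}$'' is justified: the clause is false in general, and the paper's one-line proof of (7) does not address it. Your plan to settle it via $G$-equivariance of $B$ cannot work, since $B=1$ is always equivariant.

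Here is a counterexample. Take $G=S_3$, $K=H=A_3$, $B=1$. Then $\sigma$ and $\tau$ are trivial, and by (5) $D(A_3,A_3,1)=\mathscr{O}(A_3)^{\rm cop}\#\mathbf{k}[S_3/A_3]$. For a transposition $s$ one has $\bar{s}\cdot\delta_r=\delta_{srs^{-1}}=\delta_{r^{-1}}$. Hence, for $r\neq 1$,
$(1\#\bar{s})(\delta_r\#1)=\delta_{r^{-1}}\#\bar{s}\neq\delta_r\#\bar{s}=(\delta_r\#1)(1\#\bar{s})$.
So $D(A_3,A_3,1)$ is noncommutative; it is $\mathbf{k}[S_3]$ when $p\neq 3$. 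By contrast, the group algebra of the trivial-action extension with trivial cocycles, $\mathbf{k}[A_3^{\vee}\times\mathbb{Z}/2]$, is commutative.

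What your argument actually proves, and what (7) should say, is this: $\widetilde{G}$ is an extension of $G/H$ by $K^{\vee}$ in which $G/H$ acts on $K^{\vee}$ through the conjugation-induced measuring of Lemma \ref{measure}, and $(\sigma,\tau)$ are cocycles for that action. This action is trivial precisely when $K$ is central in $G$. So drop the attempt to verify triviality and state (7) with this action.
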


\begin{proof}
(1)-(2) These are straightforward.

(3) This is straightforward noting that $\varepsilon(x\cdot a)=\varepsilon(x)\varepsilon(a)$, $\varepsilon(\sigma(x,\tilde{x}))=\varepsilon(x)\varepsilon(\tilde{x})$, and $(\varepsilon\otimes \varepsilon)(\tau(x))=\varepsilon(x)(1\otimes 1)$.

(4)-(5) These follow in a straightforward manner using Remark \ref{trivial}.

(6) This follows since $\mathbf{k}[H]$ is cocommutative.

(7) This follows since $D(K,H,B)$ is cocommutative when $K$ is commutative.
\end{proof}

\subsection{Classification of Hopf quotient pairs of $D(G)$}\label{sec:Classification of quotients}
A {\em Hopf quotient pair of $D(G)$} is a pair $(D,\varphi)$ where $\varphi:D(G)\twoheadrightarrow D$ is a surjective Hopf algebra map. We will say that two Hopf quotient pairs $(D_1,\varphi_1)$ and $(D_2,\varphi_2)$ are {\em equivalent} if there is a Hopf algebra isomorphism $f:D_1\xrightarrow{\cong} D_2$, such that $\varphi_2=f\varphi_1$.

\begin{lemma}\label{uniquenesstriples}
Two Hopf quotient pairs $\left(D(K,H,B),\theta\right)$ and $\left(D\left(K',H',B'\right),\theta'\right)$ as constructed in Theorem \ref{quotientha} are equivalent if and only if $(K,H,B)=(K',H',B')$. 
\end{lemma}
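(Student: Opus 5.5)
Two Hopf quotient pairs are equivalent exactly when their surjections have the same kernel. If $f\colon D_1\to D_2$ is an isomorphism with $\varphi_2=f\varphi_1$, then $\ker\varphi_1=\ker\varphi_2$. Conversely, equal kernels give an induced isomorphism $D_1\cong D(G)/\ker\varphi_1=D(G)/\ker\varphi_2\cong D_2$ compatible with the maps. The "if" direction of the lemma is therefore trivial. For "only if", I will assume $\ker\theta=\ker\theta'$ and recover $K$, $H$ and $B$ from this common kernel $I$, using the description of $\ker\theta$ in Theorem \ref{quotientha}(2).

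First, I recover $K$. I claim $I\cap(\mathscr{O}(G)\bowtie 1)=\mathscr{O}(G/K)^+\mathscr{O}(G)\bowtie 1$. Indeed $\theta(b\bowtie 1)=q_K(b)\#1$, and $a\mapsto a\#1$ is injective by Proposition \ref{genproprs}(2). So $b\bowtie 1\in\ker\theta$ iff $q_K(b)=0$ iff $b\in\ker q_K=\mathscr{O}(G/K)^+\mathscr{O}(G)$. Since a normal subgroup scheme is determined by its defining Hopf ideal $\ker q_K$, we get $K=K'$.

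Next, I recover $H$ and $B$ from $I\cap(\mathscr{O}(G)\bowtie\mathbf{k}[G])$ by looking at which elements $1\bowtie u$ become equal to elements of the form $b\bowtie 1$ modulo $I$. More precisely, I will show that
$$\mathbf{k}[H]=\{u\in\mathbf{k}[G]\mid \theta(1\bowtie u)\in\theta(\mathscr{O}(G)\bowtie 1)\}.$$
The inclusion $\subseteq$ holds because $\theta(1\bowtie v)=B(v)\#1$ for $v\in\mathbf{k}[H]$, using $\eta_H(v)=v$ and $\pi_H(v_2)=\varepsilon(v_2)$. For $\supseteq$, compose with the projection of Proposition \ref{genproprs}(3). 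This sends $\theta(1\bowtie u)=B(\eta(u_1))\#\pi(u_2)$ to $\pi_H(u)$, and sends everything in $\theta(\mathscr{O}(G)\bowtie1)$ into $\mathbf{k}1$. So the condition forces $\pi_H(u)\in\mathbf{k}1$. Using $\mathbf{k}[G]^{\mathrm{co}\,\mathbf{k}[G/H]}=\mathbf{k}[H]$, I then want to conclude $u\in\mathbf{k}[H]$.

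This last step is the main obstacle, since $\pi_H(u)\in\mathbf{k}$ alone does not imply $u\in\mathbf{k}[H]$. To get around it, I will use a Hopf-theoretic characterization instead. The set on the right is $\theta^{-1}$ of the Hopf subalgebra $\mathscr{O}(K)^{\rm cop}\#1$, intersected with $\mathbf{k}[G]$. This is a Hopf subalgebra $\mathbf{k}[L]$ of $\mathbf{k}[G]$. Its image in $\mathbf{k}[G/H]$ under $\pi_H$ is a Hopf subalgebra lying in $\mathbf{k}1$, so $L\subseteq\ker\pi_H=H$. Combined with $\mathbf{k}[H]\subseteq\mathbf{k}[L]$, this gives $L=H$. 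Because the right-hand side is defined purely through $I$ and $K$, and $D(G)/I$ identifies $\theta(\mathscr{O}(G)\bowtie1)$ with the image of $\mathscr{O}(G)$, we conclude $H=H'$.

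Finally, I recover $B$. For $v\in\mathbf{k}[H]$, the element $1\bowtie v-\mu_K(B(v))\bowtie1$ lies in $I$, and the same holds with $B'$. Subtracting gives $(\mu_K(B(v))-\mu_K(B'(v)))\bowtie1\in I$. By the first step this means $q_K\mu_K(B(v)-B'(v))=0$, i.e. $B(v)=B'(v)$. Hence $(K,H,B)=(K',H',B')$.
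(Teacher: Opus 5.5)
Reducing equivalence to $\ker\theta=\ker\theta'$ is correct, and so are your recovery of $K$ from $I\cap(\mathscr{O}(G)\bowtie 1)$ and of $B$ once $H=H'$ is known. The gap is in the recovery of $H$.

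\textbf{The characterization of $\mathbf{k}[H]$ is false.} Let $S_\theta:=\{u\in\mathbf{k}[G]\mid \theta(1\bowtie u)\in\theta(\mathscr{O}(G)\bowtie 1)\}$. This set always contains $\ker(\theta|_{\mathbf{k}[G]})$, so it is in general strictly larger than $\mathbf{k}[H]$. Take $K=1$, which forces $B$ to be trivial. Then $\theta(1\bowtie u)=\pi_H(u)$ and $S_\theta=\mathbf{k}1+\mathbf{k}[G]\mathbf{k}[H]^+$. For $G=\mathbb{Z}/4=\langle g\rangle$ and $H=\{1,g^2\}$ this set contains $g^3-g\notin\mathbf{k}[H]$. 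It is also not a subcoalgebra, since $\Delta(g^3-g)=g^3\otimes g^3-g\otimes g\notin S_\theta\otimes S_\theta$. Your proposed fix therefore fails as well. The preimage of a subcoalgebra under a coalgebra map need not be a subcoalgebra, so $S_\theta$ is not of the form $\mathbf{k}[L]$. The ``main obstacle'' you ran into comes from the set equality being false, not merely hard to prove.

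\textbf{How to repair it.} You only need the inclusion $\mathbf{k}[H]\subseteq S_\theta$, which you already proved, together with $S_\theta=S_{\theta'}$. The latter holds because both sets equal $\{u\mid 1\bowtie u\in\mathscr{O}(G)\bowtie 1+I\}$.
\begin{itemize}
\item For $v\in\mathbf{k}[H]$, this gives $\theta'(1\bowtie v)\in\mathscr{O}(K)\#'1$.
\item Apply the projection $D(K,H',B')\to\mathbf{k}[G/H']$ of Proposition \ref{genproprs}(3). This sends $\theta'(1\bowtie v)$ to $\pi_{H'}(v)$ and sends $\mathscr{O}(K)\#'1$ into $\mathbf{k}1$, so $\pi_{H'}(v)\in\mathbf{k}1$, hence $\pi_{H'}(v)=\varepsilon(v)1$.
\item Because $\mathbf{k}[H]$ is a subcoalgebra, this gives $v_1\otimes\pi_{H'}(v_2)=v\otimes 1$ for every $v\in\mathbf{k}[H]$.
\item Thus $\mathbf{k}[H]\subseteq\mathbf{k}[G]^{\mathrm{co}\,\mathbf{k}[G/H']}=\mathbf{k}[H']$, i.e. $H\subseteq H'$. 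By symmetry $H=H'$.
\end{itemize}
The coinvariant criterion works for elements of a subcoalgebra, not for an arbitrary $u$ with $\pi_H(u)\in\mathbf{k}1$. This repaired argument is exactly the paper's, which applies $\varepsilon\otimes\mathrm{id}$ to $f(B(v)\#1)=\theta'(1\bowtie v)$. With it in place, your final step recovering $B$ goes through unchanged.
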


\begin{proof}
Suppose that $f:D(K,H,B)\xrightarrow{\cong} D\left(K',H',B'\right)$ is a Hopf algebra isomorphism, such that $\theta'=f\theta$. In particular, 
$\theta'_{\mid \mathscr{O}(G)}=(f\theta)_{\mid \mathscr{O}(G)}$; that is, $q_{K'}=(f_{\mid \mathscr{O}(K)})\circ q_K$, or equivalently, $\iota_{K'}=\iota_K\circ\widetilde{(f_{\mid \mathscr{O}(K)})}$, where $\widetilde{(f_{\mid \mathscr{O}(K)})}:K'\twoheadrightarrow K$ is the corresponding surjective group scheme morphism. Thus, $K=K'$ and $f_{\mid \mathscr{O}(K)}:\mathscr{O}(K)\to \mathscr{O}(K)$ is the identity map.

Also, for every $v\in \mathbf{k}[H]$, we have $\eta_H(v)=v$ and $\pi_H(v)=\varepsilon(v)$, so we have that 
$\theta(1\bowtie v)=B(v)\#1$. Applying $\varepsilon\otimes\mathrm{id}$ to 
$$f(B(v)\#1)=f\theta(1\bowtie v)=\theta'(1\bowtie v)=B'(\eta_{H'}(v_1))\#'\pi_{H'}(v_2)$$ 
then yields $\pi_{H'}(v)=\varepsilon(v)$. Hence, $\mathbf{k}[H]^+\subseteq\ker(\pi_{H'})=\mathbf{k}[G]\mathbf{k}[H']^+$. By symmetry, we obtain $\ker(\pi_H)=\ker(\pi_{H'})$, and therefore $H=H'$. 

Finally, it follows from the above and Theorem \ref{quotientha}(2) that  
$$B'=\theta'_{\mid \mathbf{k}[H]}=(f\circ\theta)_{\mid \mathbf{k}[H]}=f_{\mid \mathscr{O}(K)}\circ \theta_{\mid \mathbf{k}[H]}=\theta_{\mid \mathbf{k}[H]}=B$$
since $f_{\mid \mathscr{O}(K)}:\mathscr{O}(K)\to \mathscr{O}(K)$ is the identity map.
\end{proof}

\begin{theorem}\label{uniqueness}
The assignment $(K,H,B)\mapsto (D(K,H,B),\theta)$ constructed in Theorem \ref{quotientha} determines a bijection between the set of triples $(K,H,B)$ and the set of equivalence classes of Hopf quotient pairs of $D(G)$. 
\end{theorem}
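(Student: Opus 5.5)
Injectivity of the assignment is exactly Lemma \ref{uniquenesstriples}, so the content is surjectivity. Fix a surjective Hopf algebra map $\varphi:D(G)\twoheadrightarrow D$. Our aim is to extract a triple $(K,H,B)$ from $\varphi$ and to show that $\ker\varphi=\ker\theta$, where $\theta$ is the map of Theorem \ref{quotientha}(2). Once the kernels agree, $\varphi$ factors as $f\theta$ for a Hopf algebra isomorphism $f$, since both are quotients of $D(G)$ by the same Hopf ideal.

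\emph{Step 1: extracting $K$.} Restrict $\varphi$ to the Hopf subalgebra $\mathscr{O}(G)^{\rm cop}\subset D(G)$. Its image is a Hopf subalgebra of $D$ and is a quotient of $\mathscr{O}(G)$, so it equals $\mathscr{O}(K)$ for a unique closed subgroup scheme $K\subseteq G$, with $\varphi|_{\mathscr{O}(G)}=q_K$ up to identification. Normality of $K$ comes from the relation $(1\bowtie u_1)(b\bowtie 1)(1\bowtie S(u_2))=(u\coadj b)\bowtie 1$: the ideal $\ker q_K=\mathscr{O}(G/K)^+\mathscr{O}(G)$ is then $\coadj$-stable, which is equivalent to $K$ being normal.

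\emph{Step 2: extracting $H$ and $B$.} Set $A:=\varphi(\mathscr{O}(G))\cong\mathscr{O}(K)$. It is a normal Hopf subalgebra of $D$, because $\mathscr{O}(G)$ is normal in $D(G)$ and $\varphi$ is surjective. Hence $D/DA^+$ is a Hopf quotient of $D(G)/D(G)\mathscr{O}(G)^+\cong\mathbf{k}[G]$, using (\ref{qkg}), and therefore equals $\mathbf{k}[G/H]$ for a unique normal subgroup scheme $H$. Next consider $\varphi|_{\mathbf{k}[H]}$. For $v\in\mathbf{k}[H]$, the element $\varphi(1\bowtie v)$ maps to $\varepsilon(v)$ in $D/DA^+$. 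I would use the exactness of $A\to D\to D/DA^+$, which holds because $D$ is faithfully flat over the Hopf subalgebra $A$ in finite dimensions, to conclude that $\varphi(\mathbf{k}[H])\subseteq D^{{\rm co}\,\mathbf{k}[G/H]}=A$. This gives a Hopf algebra map $B:=\varphi|_{\mathbf{k}[H]}:\mathbf{k}[H]\to\mathscr{O}(K)$.

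Its properties then follow from identities in $D(G)$:
\begin{itemize}
\item $G$-equivariance (\ref{Binv}) follows by applying $\varphi$ to $(1\bowtie u_1)(1\bowtie v)(1\bowtie S(u_2))=1\bowtie{\rm ad_\ell}(u)(v)$ and using the conjugation formula from Step 1, which on $A$ becomes $u*\cdot$.
\item That $H$ and $K$ centralize each other follows because $\varphi(1\bowtie v)$ now lies in the image of $\mathscr{O}(G)$. Commuting $(1\bowtie v)$ past $(b\bowtie 1)$ gives $q_K(v_1\coadj b)\varphi(1\bowtie v_2)=\varphi(1\bowtie v_1)q_K(b)$ for $b\in\mathscr{O}(G)$. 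Both $q_K(b)$ and the $\varphi(1\bowtie v_i)$ lie in the commutative algebra $\mathscr{O}(K)$, so $q_K(v\coadj b)=\varepsilon(v)q_K(b)$ by convolution-invertibility. Pairing with $\mathbf{k}[K]$ then yields ${\rm ad_r}(v)(w)=\varepsilon(v)w$.
\end{itemize}

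\emph{Step 3: comparing kernels.} By construction, $\ker\varphi$ contains $\mathscr{O}(G/K)^+$ and every element $\mu_K(B(v))\bowtie 1-1\bowtie v$. Theorem \ref{quotientha}(2) therefore gives $\ker\theta\subseteq\ker\varphi$, so $\varphi$ factors through $\theta$. For equality it suffices to show $\dim D\ge|K|[G:H]$. This holds because $D$ is free over $A\cong\mathscr{O}(K)$ (Nichols--Zoeller) with $D/DA^+\cong\mathbf{k}[G/H]$, so $\dim D=|K|\cdot[G:H]$. Comparing with Proposition \ref{genproprs}(1), the induced surjection $D(K,H,B)\to D$ is an isomorphism, and the pairs are equivalent.

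I expect the main obstacle to be the claim $\varphi(\mathbf{k}[H])\subseteq A$ in Step 2. This requires knowing that $A$ coincides with the coinvariants of $D\to D/DA^+$. I would cite the standard result (Schneider, or Takeuchi) that for a normal Hopf subalgebra of a finite-dimensional Hopf algebra the sequence is exact. I would also double-check that $H$ as defined really satisfies $\mathbf{k}[H]=\{u:\varphi(1\bowtie u)\in A\}$, which is consistent with the kernel of $\mathbf{k}[G]\to D/DA^+$ being $\mathbf{k}[G]\mathbf{k}[H]^+$.
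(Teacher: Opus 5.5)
Your argument is correct. Steps 1 and 2 match the paper's Claims 1--3:
\begin{itemize}
\item normality of $K$ from the conjugation formula;
\item $\varphi(1\bowtie\mathbf{k}[H])\subseteq D^{{\rm co}\,\mathbf{k}[G/H]}=\mathscr{O}(K)$ by exactness;
\item $G$-equivariance of $B$;
\item centralizing, via commutativity of $\mathscr{O}(K)$ and convolution-invertibility of $B$.
\end{itemize}

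The conclusion is where you differ. The paper uses the cleft-extension results of Andruskiewitsch--Devoto and Schneider to present $D$ as $\mathscr{O}(K)\#^{\tilde\tau}_{\tilde\sigma}\mathbf{k}[G/H]$. It then proves the explicit formula $\varphi(1\bowtie u)=B(\eta_H(u_1))\widetilde{\#}\pi_H(u_2)$ (Claim 4) from $u=\eta_H(u_1)\gamma_H(\pi_H(u_2))$. From this it concludes that the identity map of $\mathscr{O}(K)\otimes\mathbf{k}[G/H]$ intertwines $\theta$ and $\varphi$, and hence is a Hopf isomorphism.

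You instead feed the relations you found into the kernel description of Theorem~\ref{quotientha}(2) to get $\ker\theta\subseteq\ker\varphi$. You then close with a dimension count, $\dim D=|K|[G:H]$, using Nichols--Zoeller freeness of $D$ over $A$.

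Your route is shorter. It also avoids having to choose the crossed-product presentation of $D$ compatibly with $\varphi$. Claim 4 needs $\varphi(1\bowtie\gamma_H(x))=1\widetilde{\#}x$, i.e.\ that $x\mapsto\varphi(1\bowtie\gamma_H(x))$ is taken as the cleaving map of $D$; the paper attributes this to the commutative diagram alone. The cost of your route is that all the weight rests on the kernel computation in Theorem~\ref{quotientha}(2). The paper's route gives more explicit information: $\varphi$ is given by literally the same formula as $\theta$ on $\mathscr{O}(K)\otimes\mathbf{k}[G/H]$, so the crossed-product structure of any quotient is seen directly to be that of $D(K,H,B)$.

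One small correction: the side check in your last paragraph is not needed, and it is false as stated. Only the inclusion $\varphi(1\bowtie\mathbf{k}[H])\subseteq A$ is used. The reverse inclusion fails: for $B=1$ one has $\theta(1\bowtie u)=1\#\pi_H(u)$. This lies in $\mathscr{O}(K)\#1$ whenever $\pi_H(u)\in\mathbf{k}1$, in particular for all $u\in\ker\pi_H=\mathbf{k}[G]\mathbf{k}[H]^+$, which in general is not contained in $\mathbf{k}[H]$.
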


\begin{proof}
By Theorem \ref{quotientha} and Lemma \ref{uniquenesstriples}, it remains to prove that for any Hopf quotient pair $(D,\varphi)$ of $D(G)$, there exist a Hopf quotient pair $(D(K,H,B),\theta)$ of $D(G)$, and a Hopf algebra isomorphism $\bar{\varphi}:D(K,H,B)\xrightarrow{\cong}D$, such that $\bar{\varphi} \theta = \varphi$.

So, let $\varphi : D(G) \twoheadrightarrow D$ be a surjective Hopf algebra map. Since the image of $\varphi_{\mid \mathscr{O}(G)}$ is a Hopf algebra quotient of $\mathscr{O}(G)$, we may assume that 
$\varphi(\mathscr{O}(G))=\mathscr{O}(K)\subseteq D$, and $\varphi_{\mid \mathscr{O}(G)}=q_K:\mathscr{O}(G)\twoheadrightarrow \mathscr{O}(K)$ for some subgroup scheme $K\subseteq G$. Moreover, since $\mathscr{O}(G)\subseteq D(G)$ is a normal Hopf subalgebra, $\mathscr{O}(K)\subseteq D$ is a normal Hopf subalgebra.
\vspace{0.2cm}

\noindent$\mathbf{Claim\, 1.}$
$K\subseteq G$ is a normal subgroup scheme. 
\vspace{0.2cm}

\noindent{\em Proof.}
Fix $a\in \mathscr{O}(G/K)^+\mathscr{O}(G)$. Then $\varphi(a\bowtie 1)=q_K(a)=0$, hence  
$$0=\varphi\left((1\bowtie u_1)(a\bowtie 1)(1\bowtie S(u_2)\right) = \varphi\left((u\coadj a)\bowtie 1\right)=q_K(u\coadj a);$$
that is, $u\coadj a\in \mathscr{O}(G/K)^+\mathscr{O}(G)$ for every $u\in \mathbf{k}[G]$. It follows that ${\rm coad}$ (\ref{coadog}) maps $\mathscr{O}(G/K)^+\mathscr{O}(G)$ to $\mathscr{O}(G/K)^+\mathscr{O}(G)\ot \mathscr{O}(G)$; that is, $K$ is normal in $G$. \qed

Next consider the exact sequence of Hopf algebras
$$\mathbf{k}\to \mathscr{O}(K) \to D \to D/(\mathscr{O}(K)^+D)\to \mathbf{k}.$$ 
Since $D(G)/(\mathscr{O}(G)^+D(G))\cong \mathbf{k}[G]$ (\ref{qkg}), it follows that the Hopf algebra $D/(\mathscr{O}(K)^+D)$ is a Hopf quotient of $\mathbf{k}[G]$, so we may assume that 
$$D/(\mathscr{O}(K)^+D)=\mathbf{k}[G/H]$$
for some {\em normal} subgroup scheme $H$ of $G$, so that we have a commutative diagram with exact rows
\begin{equation}\label{commdiagram}
\begin{tikzcd}
\mathbf{k}\arrow[r] & \mathscr{O}(G) \arrow[r] \arrow[d, "q_K"] & D(G) \arrow[r, "\varepsilon\ot \id"] \arrow[d, "\varphi"] & {\mathbf{k}[G]} \arrow[r] \arrow[d, "\pi_H"] & \mathbf{k}\\
\mathbf{k}\arrow[r] & \mathscr{O}(K) \arrow[r] & D \arrow[r, "p"] & \mathbf{k}[G/H] \arrow[r] & \mathbf{k}.
\end{tikzcd}
\end{equation}
In particular, by \cite{AD, S}, we may assume that $D=\mathscr{O}(K) \#^{\tilde{\tau}}_{\tilde{\sigma}} \mathbf{k}[G/H]$ as Hopf algebras, so that $D=\mathscr{O}(K)\ot \mathbf{k}[G/H]$ as vector spaces, and we can view $\mathscr{O}(K)$ as a Hopf subalgebra of $D$ in the obvious way. To avoid confusion, we will write $a\widetilde{\#}x$ to denote an element of $D$.
\vspace{0.2cm}

\noindent$\mathbf{Claim\, 2.}$
For every $v\in \mathbf{k}[H]$, $\varphi(1\bowtie v)\in \mathscr{O}(K)$. Moreover, the Hopf algebra map 
$$B:=\varphi_{\mid \mathbf{k}[H]} : \mathbf{k}[H] \to \mathscr{O}(K),\quad v\mapsto \varphi(1\bowtie v),$$
is $G$-equivariant as in (\ref{G-invariant}).
\vspace{0.2cm}

\noindent{\em Proof.}
By (\ref{commdiagram}), for every $v\in \mathbf{k}[H]$, we have 
\begin{eqnarray*}
\lefteqn{(\id\otimes p)\Delta_D(\varphi(1\bowtie v))=(\id\otimes p)(\varphi(1\bowtie v_1)\otimes \varphi(1\bowtie v_2))}\\ 
& = & \varphi(1\bowtie v_1)\otimes p(\varphi(1\bowtie v_2))=\varphi(1\bowtie v_1)\otimes \pi_H(v_2)\\
& = &\varphi(1\bowtie v_1)\otimes \varepsilon(v_2)= \varphi(1\bowtie v)\otimes 1,
\end{eqnarray*}
which implies that $\varphi(1\bowtie v)\in D^{{\rm co}\mathbf{k}[G/H]}=\mathscr{O}(K)$. 

Furthermore, for every $v \in \mathbf{k}[H]$ and $u \in \mathbf{k}[G]$, we have
\begin{eqnarray*}
\lefteqn{B({\rm ad_{\ell}}(u)(v))= \varphi(1\bowtie u_1vS(u_2))}\\
& = & \varphi((1\bowtie u_1)(1\bowtie v)(1\bowtie S(u_2))) =\varphi(1\bowtie u_1)\varphi(1\bowtie v)\varphi(1\bowtie S(u_2))\\
& = & \varphi(1\bowtie u_1)\varphi(\mu_K(B(v))\bowtie 1)\varphi(1\bowtie S(u_2))=\varphi((1\bowtie u_1)(\mu_K(B(v))\bowtie 1)(1\bowtie S(u_2)))\\
& = & \varphi((u_1\coadj \mu_K(B(v))\bowtie u_2)(1\bowtie S(u_3)))=\varphi((u_1\coadj \mu_K(B(v)))\bowtie u_2S(u_3))\\
& = & \varphi((u\coadj \mu_K(B(v)))\bowtie 1)=q_K(u\coadj \mu_K(B(v)))=u*B(v).
\end{eqnarray*}
Here $\mu_K$ is a section of $q_K$ as in (\ref{muL}). Thus, for every $w\in \mathbf{k}[K]$, we have
\begin{eqnarray*}
\lefteqn{\left\langle B({\rm ad_{\ell}}(u_1)(v)),{\rm ad_{\ell}}(u_2)(w)\right\rangle=\left\langle q_K\left(u_1\coadj \mu_K(B(v))\right),{\rm ad_{\ell}}(u_2)(w)\right\rangle}\\
& = & \left\langle q_K\left(S(u_2)\coadj \mu_K(q_K\left(u_1\coadj \mu_K(B(v))\right))\right),w\right\rangle\\
& = & \left\langle S(u_2)*q_K\left(u_1\coadj \mu_K(B(v))\right),w\right\rangle=
\left\langle S(u_2)*(u_1*B(v)),w\right\rangle\\
& = & \varepsilon(u)\left\langle B(v),w\right\rangle,
\end{eqnarray*}
as claimed.
\qed
\vspace{0.1cm}

\noindent$\mathbf{Claim\, 3.}$
The normal subgroup schemes $K,H\subseteq G$ centralize each other.
\vspace{0.2cm}

\noindent{\em Proof.}
We claim that $\mathbf{k}[H]$ acts trivially on $\mathscr{O}(K)$ via $\varphi$. Indeed, 
for every $v \in \mathbf{k}[H]$ and $b \in \mathscr{O}(G)$, we have
\begin{eqnarray*}
\lefteqn{\varphi(1\bowtie v)_1\varphi(b\bowtie 1)S(\varphi(1\bowtie v)_2) = \varphi(1\bowtie v_1)\varphi(b\bowtie 1)\varphi(1\bowtie S(v_2)) }\\ 
& = & \varphi(1\bowtie v_1)\varphi(1\bowtie S(v_2))\varphi(b\bowtie 1)= \varphi(1\bowtie v_1S(v_2))\varphi(b\bowtie 1)= \varepsilon(v) \varphi(b\bowtie 1),
\end{eqnarray*}
as claimed, where we applied Claim 2 to use the commutativity of $\mathscr{O}(K)$ in the second equality.

Now since $(\ad_{\ell}(1\bowtie v))(b\bowtie 1) = (v\coadj b)\bowtie 1$, it follows that 
$$q_K(v\coadj b) = \varepsilon(v) q_K(b); \quad\forall\,v\in\mathbf{k}[H],\ b\in\mathscr{O}(G).$$ 
Pairing with $w \in \mathbf{k}[K]$ and using (\ref{left coadjoint action2}), we obtain 
$$\langle b,{\rm ad_r}(v)(w)\rangle=\langle v\coadj b,w\rangle = \langle q_K(v\coadj b), w \rangle = \varepsilon(v)\langle q_K(b), w \rangle = \varepsilon(v) \langle b, w \rangle.$$ 
Since this holds for all $b \in \mathscr{O}(G)$, it follows that ${\rm ad_r}(v)(w) = \varepsilon(v) w$ for all $w \in \mathbf{k}[K]$; that is, $\mathbf{k}[H]$ and $\mathbf{k}[K]$ commute. \qed

Set $\pi:=\pi_H$, $\gamma:=\gamma_H$, $\eta:=\eta_H$, and $q:=q_K$.
\vspace{0.2cm}

\noindent$\mathbf{Claim\, 4.}$
For every $u\in \mathbf{k}[G]$, we have $\varphi(1\bowtie u)=B(\eta(u_1))\widetilde{\#} \pi(u_2)$.
\vspace{0.2cm}

\noindent{\em Proof.} 
First note that the commutativity of (\ref{commdiagram}) implies that for every $x\in \mathbf{k}[G/H]$,
$$\varphi(1\bowtie \gamma(x))=1\widetilde{\#} x.$$
Now since by Lemma \ref{gcphgh}(2), $u=\eta(u_1)(\gamma(\pi(u_2)))$ for every $u\in \mathbf{k}[G]$, it follows that 
\begin{eqnarray*}
\lefteqn{\varphi(1\bowtie u)=\varphi(1\bowtie \eta(u_1)(\gamma(\pi(u_2))))= \varphi(1\bowtie \eta(u_1))\varphi(1\bowtie \gamma(\pi(u_2)))}\\
& = & (B(\eta(u_1))\widetilde{\#} 1)(1\widetilde{\#} \pi(u_2))= B(\eta(u_1))\widetilde{\#} \pi(u_2),
\end{eqnarray*}
as claimed. \qed
\vspace{0.2cm}

It now follows from Claims 2 and 4 that we have
\begin{equation}\label{varphiis}
\varphi(b\bowtie u)=\varphi(b\bowtie 1)\varphi(1\bowtie u)=q(b)B(\eta(u_1))\widetilde{\#} \pi(u_2);\quad \forall\, b\bowtie u\in D(G).
\end{equation}

Finally, let $(D(K,H,B),\theta)$ be the Hopf quotient pair of $D(G)$ corresponding, by Theorem \ref{quotientha} and Claims 2-3, to the triple $(K,H,B)$, and consider the identity map 
$$\bar{\varphi}:=\id_{\mathscr{O}(K)\ot \mathbf{k}[G/H]} : D(K,H,B) \to D,\quad a \# x\mapsto  a\widetilde{\#}x.$$
Then $\bar{\varphi} \theta = \varphi$ by (\ref{varphiis}), so $\theta = \varphi$, and  
$\bar{\varphi}$ is a Hopf algebra isomorphism, so $(D,\varphi)$ is equivalent to $(D(K,H,B),\theta)$.
\end{proof}

\begin{corollary}\label{lattice}
For each Hopf quotient pair $(D(K,H,B),\theta)$ of $D(G)$ and surjective Hopf algebra homomorphism $\varphi:D(K,H,B)\twoheadrightarrow D$, the Hopf quotient pair $(D,\varphi\theta)$ is equivalent to a Hopf quotient pair $(D(K',H',B'),\theta')$ for some normal subgroup schemes $K'\subseteq K$ and $H\subseteq H'$ of $G$, such that 
$$\iota^{\sharp}_{K',K}\circ B =B'\circ \iota_{H,H'}:\mathbf{k}[H]\to \mathscr{O}(K').$$
\end{corollary}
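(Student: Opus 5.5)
Since $\varphi\theta:D(G)\twoheadrightarrow D$ is a surjective Hopf algebra map, Theorem \ref{uniqueness} gives a unique triple $(K',H',B')$ and a Hopf algebra isomorphism $f:D(K',H',B')\xrightarrow{\cong}D$ with $f\theta'=\varphi\theta$. We may therefore replace $(D,\varphi\theta)$ by $(D(K',H',B'),\theta')$. It remains to show three things: $K'\subseteq K$, $H\subseteq H'$, and the compatibility $\iota^{\sharp}_{K',K}\circ B=B'\circ\iota_{H,H'}$.

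For $K'\subseteq K$, restrict to $\mathscr{O}(G)$. By construction $\theta_{\mid\mathscr{O}(G)}=q_K$ and $\theta'_{\mid\mathscr{O}(G)}=q_{K'}$. Hence
$$q_{K'}=(f^{-1}\varphi)_{\mid\mathscr{O}(K)}\circ q_K .$$
Proposition \ref{genproprs}(2) shows that $\mathscr{O}(K)\to D(K,H,B)$ and $\mathscr{O}(K')\to D(K',H',B')$ are Hopf inclusions. Therefore $g:=(f^{-1}\varphi)_{\mid\mathscr{O}(K)}$ is a Hopf algebra map $\mathscr{O}(K)\to\mathscr{O}(K')$, and it is surjective because $q_{K'}$ is. Dually, $\iota_{K'}$ factors through $\iota_K$, so $K'\subseteq K$ is a closed subgroup scheme. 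Moreover $g=\iota^{\sharp}_{K',K}$, since both $g$ and $\iota^{\sharp}_{K',K}$ composed with $q_K$ give $q_{K'}$, and $q_K$ is surjective. The scheme $K'$ is normal in $G$ by Claim 1 in the proof of Theorem \ref{uniqueness}.

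For $H\subseteq H'$, argue as in the proof of Lemma \ref{uniquenesstriples}. Take $v\in\mathbf{k}[H]$. Then $\theta(1\bowtie v)=B(v)\#1$, so
$$\theta'(1\bowtie v)=f^{-1}\varphi(B(v)\#1)=g(B(v))\#'1,$$
which lies in $\mathscr{O}(K')$. On the other hand, $\theta'(1\bowtie v)=B'(\eta_{H'}(v_1))\#'\pi_{H'}(v_2)$. Applying $\varepsilon\otimes\id$ to both expressions gives $\pi_{H'}(v)=\varepsilon(v)$. Hence $\mathbf{k}[H]^+\subseteq\ker\pi_{H'}=\mathbf{k}[G]\mathbf{k}[H']^+$. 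Since $\mathbf{k}[H]$ is the coinvariants of $\pi_H$ and $\mathbf{k}[H']$ is the coinvariants of $\pi_{H'}$, this yields $\mathbf{k}[H]\subseteq\mathbf{k}[H']$, i.e.\ $H\subseteq H'$.

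For the compatibility, note that $v\in\mathbf{k}[H']$ forces $\eta_{H'}(v)=v$ (Lemma \ref{gcphgh}(1)) and $\pi_{H'}(v_2)=\varepsilon(v_2)$. So $\theta'(1\bowtie v)=B'(v)\#'1$. Comparing with the previous paragraph gives $B'(v)=g(B(v))=\iota^{\sharp}_{K',K}(B(v))$, which is the required identity.

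I expect the main obstacle to be the deduction of $H\subseteq H'$ from $\mathbf{k}[H]^+\subseteq\mathbf{k}[G]\mathbf{k}[H']^+$. This relies on the correspondence between normal Hopf subalgebras and quotients through coinvariants in the cleft exact sequence (\ref{gammaH}), and I would cite \cite{S} for it rather than reprove it. The remaining steps are formal consequences of $f\theta'=\varphi\theta$.
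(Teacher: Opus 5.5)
Your proposal is correct and follows the paper's proof. You obtain $(K',H',B')$ and $f$ from Theorem \ref{uniqueness}, restrict to $\mathscr{O}(G)$ to get $K'\subseteq K$, and apply $\varepsilon\otimes\mathrm{id}$ to $\theta'(1\bowtie v)$ for $v\in\mathbf{k}[H]$ to get $\pi_{H'}|_{\mathbf{k}[H]}=\varepsilon$, hence $H\subseteq H'$. Your check that $g=\iota^{\sharp}_{K',K}$ and $\theta'(1\bowtie v)=B'(v)\#1$ spells out the compatibility the paper only attributes to ``functoriality.'' The step you flag as the main obstacle needs no external citation: $\pi_{H'}=\varepsilon$ on the subcoalgebra $\mathbf{k}[H]$ gives $v_1\otimes\pi_{H'}(v_2)=v\otimes 1$, so $v\in\mathbf{k}[G]^{\mathrm{co}\,\mathbf{k}[G/H']}=\mathbf{k}[H']$.
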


\begin{proof}
Since $(D,\varphi\theta)$ is a Hopf quotient pair of $D(G)$, it follows from Theorem \ref{uniqueness} that there exists a Hopf quotient pair $(D(K',H',B'),\theta')$ together with a Hopf algebra isomorphism $f:D(K',H',B')\xrightarrow{\cong} D$, such that $\varphi\theta=f\theta'$. In particular, we have
$$(\varphi\theta)|_{\mathscr{O}(G)}=\left(\varphi|_{\mathscr{O}(K)}\right)\circ q_K.$$ On the other hand, since $\varphi\theta=f\theta'$ and $\theta'|_{\mathscr{O}(G)}=q_{K'}$, we have $(\varphi\theta)|_{\mathscr{O}(G)}=\left(f|_{\mathscr{O}(K')}\right)\circ q_{K'}$. Thus, $\varphi q_K=f q_{K'}$, so $f^{-1}\circ\left(\varphi|_{\mathscr{O}(K)}\right):\mathscr{O}(K)\twoheadrightarrow  \mathscr{O}(K')$ is a surjective Hopf algebra map, so $K'\subseteq K$ is a subgroup scheme.

Next, for every $v\in \mathbf{k}[H]^+$, we have
$$\varphi(B(v)\# 1)=\varphi\theta(1\bowtie v)=f\theta'(1\bowtie v)=f\left(B'(\eta_{H'}(v_1))\#\pi_{H'}(v_2)\right),$$
thus, we have
$$B'(\eta_{H'}(v_1))\#\pi_{H'}(v_2)=f^{-1}\left(\varphi(B(v)\# 1)\right)\in \mathscr{O}(K')^+\# 1.$$
Applying $\varepsilon\ot\id$ to both sides of the last equation yields $\pi_{H'}(v)=0$, which implies that $v\in \mathbf{k}[H']^+\mathbf{k}[G]$. Thus, $\mathbf{k}[H]^+\mathbf{k}[G]\subseteq \mathbf{k}[H']^+\mathbf{k}[G]$; that is, 
$H\subseteq H'$, as desired.

Finally, since $\varphi\theta=f\theta'$, and Theorem \ref{quotientha}(2) gives $\theta|_{\mathbf{k}[H]}=B$ and $\theta'|_{\mathbf{k}[H']}=B'$, functoriality implies that we have $\iota^{\sharp}_{K',K}\circ B =B'\circ \iota_{H,H'}:\mathbf{k}[H]\to \mathscr{O}(K')$, as desired.
\end{proof}

\begin{corollary}\label{lattice2}
Let $(D(K,H,B),\theta)$ and $(D\left(K',H',B'\right),\theta')$ be two Hopf quotient pairs of $D(G)$, such that $K\subseteq K'$ and $H'\subseteq H$ are normal subgroup schemes of $G$, and
$\mathbf{q}\circ B' =B\circ \iota_{H',H}:\mathbf{k}[H']\to \mathscr{O}(K)$, where $\mathbf{q}:=\iota^{\sharp}_{K,K'}:\mathscr{O}(K')\twoheadrightarrow \mathscr{O}(K)$ is the canonical surjective Hopf algebra map. 
Then there is a unique surjective Hopf algebra map
$\varphi:D\left(K',H',B'\right)\twoheadrightarrow D(K,H,B)$, such that $\theta =\varphi\theta'$.
\end{corollary}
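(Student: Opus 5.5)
Uniqueness is immediate: $\theta'$ is surjective, so any $\varphi$ with $\varphi\theta'=\theta$ is determined on $D(K',H',B')=\theta'(D(G))$. For existence, the natural route is to show $\ker\theta'\subseteq\ker\theta$. Then $\theta$ factors through $D(G)/\ker\theta'\cong D(K',H',B')$ as a linear map $\varphi$ with $\varphi\theta'=\theta$. Since $\theta$ and $\theta'$ are surjective Hopf algebra maps, $\varphi$ is automatically an algebra and coalgebra map and is surjective. It also commutes with antipodes, because
$$\varphi S\theta'=\varphi\theta' S=\theta S=S\theta=S\varphi\theta'.$$

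To prove $\ker\theta'\subseteq\ker\theta$, I will use the explicit description of the kernel in Theorem \ref{quotientha}(2). The ideal $\ker\theta'$ is generated by $\mathscr{O}(G/K')^+$ together with the elements $\mu_{K'}(B'(v))\bowtie 1-1\bowtie v$ for $v\in\mathbf{k}[H']$. Since $\ker\theta$ is an ideal, it suffices to check that each generator lies in $\ker\theta$.

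\emph{First generators.} Since $K\subseteq K'$, we have $\mathscr{O}(G/K')\subseteq\mathscr{O}(G/K)$, hence $\mathscr{O}(G/K')^+\subseteq\mathscr{O}(G/K)^+\subseteq\ker\theta$. Equivalently, $q_K=\mathbf{q}\circ q_{K'}$ kills $\mathscr{O}(G/K')^+$, because $q_{K'}$ already does.

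\emph{Second generators.} Take $v\in\mathbf{k}[H']$. Using $q_K=\mathbf{q}\circ q_{K'}$ and $q_{K'}\mu_{K'}=\id$, we get
$$\theta(\mu_{K'}(B'(v))\bowtie 1)=\mathbf{q}(B'(v))\#1=B(v)\#1,$$
where the last equality is the hypothesis $\mathbf{q}\circ B'=B\circ\iota_{H',H}$. On the other side, $v\in\mathbf{k}[H']\subseteq\mathbf{k}[H]$, so $\eta_H(v_1)=v_1$ by Lemma \ref{gcphgh}(1) and $\pi_H(v_2)=\varepsilon(v_2)$. Hence
$$\theta(1\bowtie v)=B(v)\#1,$$
and the difference $\mu_{K'}(B'(v))\bowtie 1-1\bowtie v$ lies in $\ker\theta$.

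The only delicate point is the identification $q_K=\mathbf{q}\circ q_{K'}$, i.e.\ that the inclusions compose, $\iota_K=\iota_{K'}\circ\iota_{K,K'}$, so their comorphisms compose. One must also remember that $\theta'$ involves a section $\mu_{K'}$ for $K'$ rather than $\mu_K$; the computation above only uses $q_{K'}\mu_{K'}=\id$, so this causes no trouble. I do not expect any genuine obstacle beyond this bookkeeping.
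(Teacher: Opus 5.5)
Your proposal is correct and is essentially the paper's proof: show $\ker\theta'\subseteq\ker\theta$ by checking the generators from Theorem \ref{quotientha}(2), using $\mathscr{O}(G/K')^+\subseteq\mathscr{O}(G/K)^+$, the identities $q_K=\mathbf{q}\,q_{K'}$ and $q_{K'}\mu_{K'}=\id$, and $\theta(1\bowtie v)=B(v)\#1$ for $v\in\mathbf{k}[H']$. Your explicit verification of uniqueness and of the Hopf-map property of the induced $\varphi$ fills in details the paper leaves implicit.
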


\begin{proof}
Since $K\subseteq K'$, we have $\mathscr{O}(G/K')\subseteq \mathscr{O}(G/K)$ as Hopf subalgebras of $\mathscr{O}(G)$, so $\mathscr{O}(G/K')^+\subseteq \mathscr{O}(G/K)^+$. Next fix $v\in \mathbf{k}[H']$ and consider $$r_v:=\mu_{K'}(B'(v))\bowtie 1-1\bowtie v\ \in D(G).$$	
We claim that $r_v\in \ker(\theta)$. First note that $$\theta(1\bowtie v)=B(\eta_H(v_1))\#\pi_H(v_2)=B(v_1)\#\varepsilon(v_2)=B(v)\# 1.$$ On the other hand, $$\theta(\mu_{K'}(B'(v))\bowtie 1)=q_K(\mu_{K'}(B'(v)))\# 1.$$ Using $\mathbf{q}\,q_{K'}=q_K$ and $q_{K'}\mu_{K'}={\rm id}_{\mathscr{O}(K')}$, we get
$$q_K(\mu_{K'}(B'(v)))=\mathbf{q}(q_{K'}(\mu_{K'}(B'(v))))=\mathbf{q}(B'(v)).$$ By assumption, $\mathbf{q}\circ B'=B\circ \iota_{H',H}$, and viewing $v\in \mathbf{k}[H']$ inside $\mathbf{k}[H]$ via $\iota_{H',H}$, this gives $\mathbf{q}(B'(v))=B(v)$. Hence,	
$$\theta(\mu_{K'}(B'(v))\bowtie 1)=B(v)\# 1=\theta(1\bowtie v),$$	
so $\theta(r_v)=0$, as claimed.
	
Thus, by Theorem \ref{quotientha}(3), we obtain $\ker(\theta') \subseteq \ker({\theta})$, so there is a unique surjective Hopf algebra map $\varphi:D(K',H',B')\twoheadrightarrow D(K,H,B)$, such that $\theta=\varphi\theta'$, as claimed.
\end{proof}

Fix two Hopf quotient pairs $(D(K,H,B),\theta)$ and $(D\left(K',H',B'\right),\theta')$ of $D(G)$, and  
define the Hopf algebra map
\begin{equation}\label{phibbprime}
\begin{split}
& \beta _{B,B'}:\mathbf{k}[K\cap K']\to \mathscr{O}(H\cap H')\\
& w\mapsto \left(\iota^{\sharp}_{H\cap H',H}B^*\iota_{K\cap K',K}(w_1)\right)\left(\iota^{\sharp}_{H\cap H',H'}\overline{B'}\iota_{K\cap K',K'}(w_2)\right);
\end{split}
\end{equation}
that is, $\beta_{B,B'} =\left(\iota^{\sharp}_{H\cap H',H}B^*\iota_{K\cap K',K}\right)\star\left(\iota^{\sharp}_{H\cap H',H'}\overline{B'}\iota_{K\cap K',K'}\right)$. 

Let $L\subseteq K\cap K'$ be the subgroup scheme, such that 
$${\rm Ker}(\beta_{B,B'})=\mathbf{k}[L]^+\mathbf{k}[K\cap K'],$$ 
and define the $G$-equivariant Hopf algebra map
\begin{equation}\label{psibbprime}
\begin{split}
& \mathbf{B}_{B,B'}:\mathbf{k}[L]\to \mathscr{O}(HH'),\\
& w\mapsto \left(\mu_{H,HH'}B^*\iota_{L,K}(w_1)\right)\left(\mu_{H',HH'}\overline{B'}\iota_{L,K'}(w_2)\right);
\end{split}
\end{equation}
that is, $\mathbf{B}_{B,B'}=\left(\mu_{H,HH'}B^*\iota_{L,K}\right)\star\left(\mu_{H',HH'}\overline{B'}\iota_{L,K'}\right)$.
Set 
\begin{equation}\label{Bint}
\mathbf{B}:=\mathbf{B}_{B,B'}^*:\mathbf{k}[HH']\to \mathscr{O}(L),
\end{equation} 
and let $\left(D\left(L,HH',\mathbf{B}\right),\Theta\right)$ be the corresponding Hopf quotient pair of $D(G)$.

\begin{proposition}\label{intersection}
The following hold:
\begin{enumerate}
\item
There exist surjective Hopf algebra maps $\varphi:D(K,H,B)\twoheadrightarrow D\left(L,HH',\mathbf{B}\right)$ and $\varphi':D\left(K',H',B'\right)\twoheadrightarrow D\left(L,HH',\mathbf{B}\right)$, such that $\varphi\theta=\Theta =\varphi'\theta'$.
\item
The Hopf quotient pair $\left(D\left(L,HH',\mathbf{B}\right),\Theta\right)$ is the maximal one having the properties in (1).
\end{enumerate}
\end{proposition}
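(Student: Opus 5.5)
We reduce both parts to the lattice criteria of Corollaries \ref{lattice} and \ref{lattice2}. For (1), Corollary \ref{lattice2} is applied twice. First, with the pair $(K,H,B)$ playing the role of $(K',H',B')$ there and $(L,HH',\mathbf{B})$ the role of $(K,H,B)$. Second, with $(K',H',B')$ in that role and the same target. The containments $L\subseteq K\cap K'\subseteq K,K'$ and $H,H'\subseteq HH'$ are built in. So we only need the compatibility identities
$$\iota^{\sharp}_{L,K}\circ B=\mathbf{B}\circ\iota_{H,HH'},\qquad \iota^{\sharp}_{L,K'}\circ B'=\mathbf{B}\circ\iota_{H',HH'}.$$
We check these by pairing with $w\in\mathbf{k}[L]$ and $v\in\mathbf{k}[H]$. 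By definition (\ref{Bint}),
$$\langle \mathbf{B}(v),w\rangle=\langle v,\mathbf{B}_{B,B'}(w)\rangle=\langle v_1,\mu_{H,HH'}B^*(w_1)\rangle\langle v_2,\mu_{H',HH'}\overline{B'}(w_2)\rangle .$$
Since $v\in\mathbf{k}[H]$ and $\mu_{H,HH'}$ is a section of the restriction to $H$, the first factor is $\langle B(v_1),w_1\rangle$. The second factor equals $\langle v_2,\iota^\sharp_{H\cap H',H'}\overline{B'}(w_2)\rangle$ only when $v_2$ lies in $H\cap H'$, and that is not automatic. So before this step we must first show that the product defining $\mathbf{B}_{B,B'}$ lands in $\mathscr{O}(HH')$ well-definedly. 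This is exactly the role of $L$: for $w\in\mathbf{k}[L]$ we have $\beta_{B,B'}(w)=\varepsilon(w)$, so $B^*$ and $\overline{B'}$ agree on $H\cap H'$ up to inversion, and their product descends to a character of $HH'\cong (H\times H')/(H\cap H')$. Using this description, $\langle \mathbf{B}(v),w\rangle=\langle B(v),w\rangle$ for $v\in\mathbf{k}[H]$, and $\langle\mathbf{B}(v'),w\rangle=\langle B'(v'),w\rangle$ for $v'\in\mathbf{k}[H']$, where $\overline{B'}=B'^*\circ S$ handles the inverse. Before applying Corollary \ref{lattice2} we also check that $L$ and $HH'$ centralize each other, that both are normal, and that $\mathbf{B}$ is $G$-equivariant. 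These follow from $K,K'\supseteq L$ centralizing $H,H'$ respectively, from normality of kernels of $G$-equivariant maps, and from the $G$-equivariance of $B,B'$ (Remark \ref{bindgiso} and Proposition \ref{genproprs}(6)).

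For (2), let $(D,\psi)$ be any Hopf quotient pair that is a common quotient, with $\psi=\chi\theta=\chi'\theta'$. By Theorem \ref{uniqueness} we may take $D=D(K'',H'',B'')$. Corollary \ref{lattice} applied to $\chi$ and to $\chi'$ gives $K''\subseteq K\cap K'$ and $H,H'\subseteq H''$, hence $HH'\subseteq H''$. It also gives $\iota^\sharp_{K'',K}B=B''\iota_{H,H''}$ and $\iota^\sharp_{K'',K'}B'=B''\iota_{H',H''}$. Restricting both to $H\cap H'$ and pairing with $w\in\mathbf{k}[K'']$ shows $\beta_{B,B'}(w)=\varepsilon(w)$. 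Here the antipode in $\overline{B'}$ makes the two contributions cancel, since both equal $\langle B''(\cdot),w\rangle$. Hence $\mathbf{k}[K'']^+\subseteq\ker\beta_{B,B'}$, and so $K''\subseteq L$. The same pairing argument shows $\iota^\sharp_{K'',L}\mathbf{B}=B''\iota_{HH',H''}$, because both sides agree on $H$ and on $H'$, and $\mathbf{k}[HH']$ is generated by $\mathbf{k}[H]\mathbf{k}[H']$. Corollary \ref{lattice2} then yields a surjection $D(L,HH',\mathbf{B})\twoheadrightarrow D$ compatible with $\Theta$ and $\psi$, which is the required maximality.

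The main obstacle is the well-definedness of $\mathbf{B}_{B,B'}$ as a $G$-equivariant Hopf map into $\mathscr{O}(HH')$ that is independent of the sections $\mu$. We would handle this by the descent argument along the multiplication map $H\times H'\twoheadrightarrow HH'$, using $\beta_{B,B'}|_{\mathbf{k}[L]}=\varepsilon$.
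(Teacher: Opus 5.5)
Your architecture is the paper's. You get (1) from two applications of Corollary~\ref{lattice2}. You get (2) from Corollary~\ref{lattice}, the pairing computation showing $\mathbf{k}[K'']^+\subseteq\ker\beta_{B,B'}$, and Corollary~\ref{lattice2} again. The paper's proof of (1) is the single sentence ``this follows from Corollary~\ref{lattice2}'', and in (2) it never checks the hypothesis $\iota^{\sharp}_{K'',L}\circ\mathbf{B}=B''\circ\iota_{HH',H''}$. You are right that these compatibilities are the real content. Your argument for the one in (2) is fine once $\mathbf{B}$ is a Hopf map: both sides are algebra maps agreeing on $\mathbf{k}[H]$ and $\mathbf{k}[H']$, and $\mathbf{k}[HH']=\mathbf{k}[H]\mathbf{k}[H']$.

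The gap is the step you flag yourself: the identities $\iota^{\sharp}_{L,K}\circ B=\mathbf{B}\circ\iota_{H,HH'}$ and $\iota^{\sharp}_{L,K'}\circ B'=\mathbf{B}\circ\iota_{H',HH'}$. For $\mathbf{B}$ as literally defined in (\ref{psibbprime})--(\ref{Bint}) they are false, so no well-definedness argument can deliver them. For any choice of sections, restricting to $H\cap H'$ gives $q_{HH',H\cap H'}(\mathbf{B}_{B,B'}(w))=\beta_{B,B'}(w)=\varepsilon(w)1$ for $w\in\mathbf{k}[L]$. So your first identity would force $B$ to be trivial on $(H\cap H')\times L$. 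Concretely, take $K=K'$, $H=H'$, $B=B'\neq 1$. Then $L=K$, $\mu_{H,H}=\mathrm{id}$, and $\mathbf{B}_{B,B}(w)=B^*(w_1S(w_2))=\varepsilon(w)1$, so $\mathbf{B}=1$. Statement (1) would then make $(D(K,H,B),\theta)$ and $(D(K,H,1),\Theta)$ equivalent, contradicting Lemma~\ref{uniquenesstriples}.

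Your descent sketch implicitly replaces the pointwise product by a descended character. That is the right idea, but it is wrong as written.
On points, write $b(h,l)=\langle B(h),l\rangle$ and $b'(h',l)=\langle B'(h'),l\rangle$. The product of $B^*$ and $\overline{B'}$ is then $(h,h')\mapsto b(h,l)b'(h',l)^{-1}$. Along $m\colon H\times H'\to HH'$ the fibres are $\{(hx,x^{-1}h')\}_{x\in H\cap H'}$, and on them this function changes by $b(x,l)b'(x,l)$. But $\beta_{B,B'}|_{\mathbf{k}[L]}=\varepsilon$ says $b=b'$, not $b=b'^{-1}$, on $(H\cap H')\times L$. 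What descends is $b(h,l)b'(h',l)$, which is built from $B'^*$ rather than $\overline{B'}$. Also, $m$ is a homomorphism only if $H$ and $H'$ commute; as a group, $HH'$ is a quotient of $H\rtimes H'$. So multiplicativity needs its own argument.

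The repair is to define $\mathbf{B}\colon\mathbf{k}[HH']\to\mathscr{O}(L)$ by $\mathbf{B}(vv'):=\iota^{\sharp}_{L,K}B(v)\,\iota^{\sharp}_{L,K'}B'(v')$ for $v\in\mathbf{k}[H]$, $v'\in\mathbf{k}[H']$.
It is well defined on $\mathbf{k}[HH']\cong\mathbf{k}[H]\otimes_{\mathbf{k}[H\cap H']}\mathbf{k}[H']$ precisely because $\iota^{\sharp}_{L,K}B=\iota^{\sharp}_{L,K'}B'$ on $\mathbf{k}[H\cap H']$. That equality is the dual form of $\beta_{B,B'}|_{\mathbf{k}[L]}=\varepsilon$.
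It is multiplicative because $v'\tilde v={\rm ad}_{\ell}(v'_1)(\tilde v)\,v'_2$, because $\iota^{\sharp}_{L,K}B({\rm ad}_{\ell}(v'_1)(\tilde v))=\varepsilon(v'_1)\,\iota^{\sharp}_{L,K}B(\tilde v)$ (by (\ref{Binv}), since $H'$ centralizes $L\subseteq K'$), and because $\mathscr{O}(L)$ is commutative.
Comultiplicativity and $G$-equivariance are routine. With this $\mathbf{B}$ your two identities in (1) hold by construction, and the rest of your argument, including (2), goes through.
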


\begin{proof}
(1) This follows from Corollary \ref{lattice2}.

(2) Suppose that 
$$\phi:D(K,H,B)\twoheadrightarrow D\left(K'',H'',B''\right),\quad\phi':D\left(K',H',B'\right)\twoheadrightarrow D\left(K'',H'',B''\right)$$
are surjective Hopf algebra maps, such that $\phi\theta=\theta'' =\phi'\theta'$. Then by Corollary \ref{lattice}, $K''\subseteq K\cap K'$ and $HH'\subseteq H''$ are normal subgroup schemes of $G$,
$$\iota^{\sharp}_{K'',K}\circ B=B''\circ \iota_{H,H''}:\mathbf{k}[H]\to \mathscr{O}(K''),\quad\text{and}$$
$$\iota^{\sharp}_{K'',K'}\circ B' =B''\circ \iota_{H',H''}:\mathbf{k}[H']\to \mathscr{O}(K'').$$

We claim that $K''\subseteq L$. Indeed, for every $w\in \mathbf{k}[K'']$ and $v\in \mathbf{k}[H\cap H']$, we have
\begin{eqnarray*}
\lefteqn{\langle \beta_{B,B'}(\iota_{K'',K\cap K'}(w)),v\rangle}\\
& = & \langle \iota^{\sharp}_{H\cap H',H}B^*\iota_{K\cap K',K}(\iota_{K'',K\cap K'}(w_1)),v_1\rangle 
\langle \iota^{\sharp}_{H\cap H',H'}\overline{B'}\iota_{K\cap K',K'}(\iota_{K'',K\cap K'}(w_2)),v_2\rangle\\
& = & \langle \iota^{\sharp}_{H\cap H',H}B^*\iota_{K'',K}(w_1),v_1\rangle 
\langle \iota^{\sharp}_{H\cap H',H'}\overline{B'}\iota_{K'',K'}(w_2),v_2\rangle\\
& = & \langle w_1,\iota^{\sharp}_{K'',K}B\iota_{H\cap H',H}(v_1)\rangle 
\langle S(w_2),\iota^{\sharp}_{K'',K'}B'\iota_{H\cap H',H'}(v_2)\rangle\\
& = & \langle w_1,B''\iota_{H,H''}\iota_{H\cap H',H}(v_1)\rangle 
\langle S(w_2),B''\iota_{H',H''}\iota_{H\cap H',H'}(v_2)\rangle\\
& = & \langle w_1,B''\iota_{H\cap H',H''}(v_1)\rangle 
\langle S(w_2),B''\iota_{H\cap H',H''}(v_2)\rangle\\
& = & \langle w_1S(w_2),B''\iota_{H\cap H',H''}(v)\rangle =\langle \varepsilon(w),B''\iota_{H\cap H',H''}(v)\rangle
=\langle \varepsilon(w),v\rangle.
\end{eqnarray*}
Thus, $\mathbf{k}[K'']^+\subseteq {\rm Ker}(\beta_{B,B'})=\mathbf{k}[L]^+\mathbf{k}[K\cap K']$, so $K''\subseteq L$, as claimed.

Next we claim that the surjective Hopf algebra map 
$$\Phi:D\left(L,HH',\mathbf{B}\right)\twoheadrightarrow D\left(K'',H'',B''\right)$$
given by Corollary \ref{lattice2}, satisfies $\Phi\varphi=\phi$ and $\Phi\varphi'=\phi'$. Indeed, by Corollary \ref{lattice2}, $\theta'' =\Phi\Theta$, and by assumption, $\varphi\theta=\Theta =\varphi'\theta'$ and $\phi\theta=\theta'' =\phi'\theta'$. Thus, we have
$$\phi\theta=\theta'' =\Phi\Theta=\Phi\varphi\theta,$$
so $\Phi\varphi=\phi$ since $\theta$ is surjective. Similarly, $\Phi\varphi'=\phi'$.
\end{proof}

\subsection{The quasitriangular ribbon structure on $D(K,H,B)$}\label{sec:Quasi-triangular ribbon structures}
Fix bases $\B_K\subseteq \B$  
for $\mathbf{k}[K]$ and $\mathbf{k}[G]$, respectively, and let $\{\delta_u\mid u\in \B\}$ be the dual basis for $\mathscr{O}(G)$. It is clear that $\{q_K(\delta_u)\mid u\in \B_K\}$ is the dual basis of $\B_K$ for $\mathscr{O}(K)$, and that for $u\in \B_K$, $\langle q_K(\delta_u),v\rangle\ne 0$ if and only if $v\in \B_K$.

\begin{corollary}\label{main0}
Each Hopf quotient $D(K,H,B)$ of $D(G)$ is quasitriangular ribbon, with $R$-matrix
$$R(K,H,B):=\sum_{u \in \B_K} (B(\eta_H(u_1))\#\pi_H(u_2))\otimes (q_K(\delta_u)\# 1),$$
and ribbon element
$$V(K,H,B):=\sum_{u \in \B_K} q_K(S(\delta_u))B(\eta_H(u_1))\#\pi_H(u_2).$$
\end{corollary}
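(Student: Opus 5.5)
The plan is to transport the quasitriangular ribbon structure of $D(G)$ along the surjective Hopf algebra map $\theta:D(G)\twoheadrightarrow D(K,H,B)$ of Theorem \ref{quotientha}. The general principle is standard. If $(A,R)$ is quasitriangular and $f:A\twoheadrightarrow A'$ is a surjective Hopf algebra map, then $(f\otimes f)(R)$ is an $R$-matrix for $A'$. Indeed, the axioms $\Delta^{\rm op}(a)R=R\Delta(a)$, $(\Delta\otimes\id)(R)=R_{13}R_{23}$ and $(\id\otimes\Delta)(R)=R_{13}R_{12}$ are preserved by $f$, and surjectivity lets us check the first one on elements $f(a)$. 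In the same way, if $V$ is a ribbon element of $A$, then $f(V)$ is central, invertible, group-like up to $(f\otimes f)(R_{21}R)^{-1}$, and satisfies $f(V)^2=f(u)S(f(u))$ and $S(f(V))=f(V)$, where $u$ is the Drinfeld element. So the first step is simply to invoke this, with $A=D(G)$, $R$ as in (\ref{rmatrix}) and $V$ as in (\ref{ribbon}). We may note that $\theta$ is Hopf, hence commutes with $S$.

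The second step computes $(\theta\otimes\theta)(R)$ explicitly. From Theorem \ref{quotientha}(2), $\theta(1\bowtie u)=B(\eta_H(u_1))\#\pi_H(u_2)$ and $\theta(\delta_u\bowtie 1)=q_K(\delta_u)\#1$, so
$$(\theta\otimes\theta)(R)=\sum_{u\in\B}\left(B(\eta_H(u_1))\#\pi_H(u_2)\right)\otimes\left(q_K(\delta_u)\#1\right).$$
To reduce the sum to $\B_K$, we use the basis choice $\B_K\subseteq\B$ and the remark preceding the corollary. For $u\in\B\setminus\B_K$, the element $q_K(\delta_u)$ pairs to zero with all of $\B_K$, hence with all of $\mathbf{k}[K]$. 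Thus $q_K(\delta_u)=0$, and those terms vanish. This gives the displayed formula for $R(K,H,B)$.

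The third step computes $\theta(V)$, with $V=\sum_{u\in\B}S(\delta_u)\bowtie u$. Here $\theta(S(\delta_u)\bowtie u)=q_K(S(\delta_u))B(\eta_H(u_1))\#\pi_H(u_2)$. Since $q_K$ commutes with $S$, we have $q_K(S(\delta_u))=S(q_K(\delta_u))$, which vanishes for $u\notin\B_K$ by the same argument. So only $u\in\B_K$ survives, and we get $V(K,H,B)$.

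The main subtlety is the interaction between the Sweedler notation $u_1\otimes u_2$ inside the sum and the restriction to $\B_K$. The expression $\sum_u u\otimes\delta_u$ is the canonical element and is basis-independent. The truncation is legitimate because we drop whole summands indexed by $u$, each of which carries the factor $q_K(\delta_u)=0$ or $S(q_K(\delta_u))=0$; the coproduct of $u$ is never split across the truncation. I would state this carefully, and then the corollary follows.
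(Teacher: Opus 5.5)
Your proposal is correct and follows the paper's own proof: you push $R$ and $V$ forward along the surjective Hopf map $\theta$ and evaluate them with the formula for $\theta$ from Theorem \ref{quotientha}(2). Your explicit check that the summands with $u\in\B\setminus\B_K$ vanish fills in the step the paper calls ``clear'': for such $u$ you have $q_K(\delta_u)=0$, and also $q_K(S(\delta_u))=S(q_K(\delta_u))=0$.
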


\begin{proof}
By Theorem \ref{quotientha}(2), the element 
\begin{eqnarray*}
\lefteqn{R(K,H,B)=(\theta\otimes \theta)(R)}\\
& = & \sum_{u \in \B} \theta(1\bowtie u) \otimes \theta(\delta_u\bowtie 1)=\sum_{u \in \B_K}(B(\eta_H(u_1))\#\pi_H(u_2))\otimes (q_K(\delta_u)\# 1)
\end{eqnarray*}
is an $R$-matrix for $D(K,H,B)$, 
and
$$V(K,H,B)=\theta(V)=\sum_{u \in \B} \theta(S(\delta_u)\bowtie u)=\sum_{u \in \B_K}q_K(S(\delta_u))B(\eta_H(u_1))\#\pi_H(u_2)$$
is a ribbon element, as claimed.
\end{proof}

\section{Tensor subcategories of $\Rep(D(G))$}\label{sec:Tensor subcategories}

Fix a finite group scheme $G$ over $\mathbf{k}$ as in \S\ref{sec:Finite group schemes}.

\subsection{Construction and classification} 
Let $\mathscr{Z}(G):=\Rep(D(G))$ as in \S\ref{sec:The representation category of D(G)}. For each triple $(K,H,B)$ as in Theorem \ref{quotientha}, set
$$\mathscr{Z}(K,H,B):=\Rep(D(K,H,B)).$$
Then using the surjective Hopf algebra map $\theta:D(G)\twoheadrightarrow D(K,H,B)$, we can (and will) view $\mathscr{Z}(K,H,B)$ as a tensor subcategory of $\mathscr{Z}(G)$.

\begin{corollary}\label{main-1}
The tensor category $\mathscr{Z}(K,H,B)$ is ribbon braided, with Frobenius-Perron dimension $|K| [G : H]$.
\end{corollary}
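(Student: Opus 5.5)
The claim has three parts: $\mathscr{Z}(K,H,B)$ is braided, it is ribbon, and its Frobenius--Perron dimension is $|K|[G:H]$. I will deduce all three from Corollary \ref{main0} and Proposition \ref{genproprs}(1), via the standard dictionary between finite dimensional Hopf algebras and their representation categories.

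First, braiding. By Corollary \ref{main0}, $D(K,H,B)$ is quasitriangular with $R$-matrix $R(K,H,B)=(\theta\ot\theta)(R)$. So $\mathscr{Z}(K,H,B)=\Rep(D(K,H,B))$ is braided, with braiding $c_{X,Y}=\tau\circ R(K,H,B)$, where $\tau$ is the flip. Since $\theta$ is a surjective Hopf algebra map, the pullback functor $\Rep(D(K,H,B))\to\Rep(D(G))$ is a fully faithful tensor functor. A representation of $D(K,H,B)$, viewed as a $D(G)$-module through $\theta$, is acted on by $R$ exactly as by $(\theta\ot\theta)(R)$. Hence this braiding is the restriction of the braiding of $\mathscr{Z}(G)$, and $\mathscr{Z}(K,H,B)$ is a braided tensor subcategory.

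Second, ribbon. Corollary \ref{main0} also gives a ribbon element $V(K,H,B)=\theta(V)$. Acting by $V(K,H,B)^{-1}$, or by $V(K,H,B)$ depending on convention, gives a twist $\theta_X$ satisfying the balancing and duality axioms. These axioms hold because the corresponding identities for $V$ in $D(G)$ are preserved by the Hopf map $\theta$: centrality, $\Delta(V)=(R_{21}R)^{-1}(V\ot V)$, $S(V)=V$, and invertibility. Hence $\mathscr{Z}(K,H,B)$ is ribbon.

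Third, the dimension. For a finite dimensional Hopf algebra $A$, the Frobenius--Perron dimension of $\Rep(A)$ is $\dim_{\mathbf{k}}A$. The reason is that the forgetful functor is a fiber functor, so $\mathrm{FPdim}(X)=\dim X$, and $A$ is the sum over simples $X$ of $\mathrm{FPdim}(X)$ copies of the projective cover $P(X)$. Combining this with Proposition \ref{genproprs}(1) gives $\mathrm{FPdim}\,\mathscr{Z}(K,H,B)=\dim D(K,H,B)=|K|[G:H]$. This dimension count rests on the vector space identification $D(K,H,B)=\mathscr{O}(K)\ot\mathbf{k}[G/H]$ from Theorem \ref{quotientha}, together with $\dim\mathbf{k}[G/H]=[G:H]$.

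None of these steps is a real obstacle. The only point needing care is the ribbon compatibility $\Delta(V)=(R_{21}R)^{-1}(V\ot V)$, which transfers along $\theta\ot\theta$ because $\theta$ is a bialgebra map.
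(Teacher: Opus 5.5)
Your proposal is correct and takes essentially the same approach as the paper. The paper's proof simply cites Proposition \ref{genproprs}, with the ribbon structure coming from Corollary \ref{main0}; you have spelled out the standard facts behind that citation, namely that $(\theta\ot\theta)(R)$ and $\theta(V)$ give the braiding and twist on the quotient, and that ${\rm FPdim}\,\Rep(A)=\dim_{\mathbf{k}}A$ for a finite dimensional Hopf algebra $A$.
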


\begin{proof}
This follows from Proposition \ref{genproprs}.
\end{proof}

\begin{theorem}\label{mainclass}
The assignment $(K,H,B)\mapsto \mathscr{Z}(K,H,B)$ determines a bijection between the set of triples $(K,H,B)$ and the set of tensor subcategories of $\mathscr{Z}(G)$.
\end{theorem}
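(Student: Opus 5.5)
The plan is to combine Theorem \ref{uniqueness} with the general correspondence between tensor subcategories of $\Rep(A)$, for a finite dimensional Hopf algebra $A$, and equivalence classes of Hopf quotient pairs of $A$ (see \cite[Proposition 2]{BN}). Once both are in place, the bijection follows by composing them, and the main task is to check that the composite is the assignment $(K,H,B)\mapsto \mathscr{Z}(K,H,B)$ as stated.

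\emph{Step 1: from Hopf quotient pairs to tensor subcategories.} Given a Hopf quotient pair $(D,\varphi)$ of $D(G)$, pulling back along $\varphi$ gives a fully faithful tensor functor $\Rep(D)\hookrightarrow\mathscr{Z}(G)$. It is full because $\varphi$ is surjective, so $D(G)$-linear and $D$-linear maps between $D$-modules coincide. Its essential image is closed under tensor products, duals and subquotients, since $\varphi$ is a Hopf map and $D$-submodules are exactly the $D(G)$-submodules. Equivalent pairs have the same image.

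\emph{Step 2: from tensor subcategories back to Hopf quotient pairs.} Conversely, let $\mathscr{D}\subseteq\mathscr{Z}(G)$ be a tensor subcategory. Let $I$ be the intersection of the annihilators of all objects of $\mathscr{D}$. Then:
\begin{enumerate}
\item $I$ is a Hopf ideal, because $\mathscr{D}$ is closed under $\ot$ and duals.
\item $\mathscr{D}=\Rep(D(G)/I)$. The inclusion $\subseteq$ is clear. For $\supseteq$, $D(G)/I$ embeds in a finite direct sum of objects of $\mathscr{D}$, hence lies in $\mathscr{D}$, and every $D(G)/I$-module is a quotient of a free one, so it lies in $\mathscr{D}$ by closure under subquotients.
\item These two constructions are mutually inverse. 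Starting from a pair $(D,\varphi)$, the kernel of $\varphi$ is recovered as the annihilator of the regular $D$-module.
\end{enumerate}
This is exactly \cite[Proposition 2]{BN}, which I will cite rather than reprove.

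\emph{Step 3: composing the bijections.} Theorem \ref{uniqueness} (together with Lemma \ref{uniquenesstriples}) identifies triples $(K,H,B)$ bijectively with equivalence classes of Hopf quotient pairs $(D(K,H,B),\theta)$. The image of $(D(K,H,B),\theta)$ under Step 1 is precisely the subcategory $\mathscr{Z}(K,H,B)$ as we have embedded it via $\theta$. So the composite map is the stated assignment, and it is a bijection.

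The only genuinely delicate point is faithfulness of Step 2, namely showing that two non-equivalent quotient pairs give different subcategories. This rests on the fact that, in the finite dimensional setting, a subcategory closed under subquotients determines the kernel ideal. I expect this to be dispatched by the cited result, so no real obstacle remains.
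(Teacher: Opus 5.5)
Your proposal is correct and matches the paper's proof: the paper also obtains the theorem by combining \cite[Proposition 2]{BN} (tensor subcategories of $\Rep(A)$ correspond to equivalence classes of Hopf quotient pairs of $A$) with Theorem \ref{uniqueness}. Your annihilator-ideal sketch of the Brugui\`eres--Natale correspondence is a correct unpacking of that citation, but it adds nothing the paper's argument needs.
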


\begin{proof}
It is well known that the set of tensor
subcategories of the representation category of a finite dimensional Hopf algebra is in bijection with the set of equivalence classes of its quotient pairs (see, e.g., \cite[Proposition 2]{BN}). Thus, the claim follows from Theorem \ref{uniqueness}.
\end{proof}

Theorem \ref{mainclass} yields a classification of certain finite braided group scheme-theoretical categories \cite{Ge}.

\begin{corollary}
Let $\mathscr{B}$ be any finite braided group scheme-theoretical category which admits a fiber functor to ${\rm Vect}$. Then there exists a finite group scheme $G$, and a triple $(K,H,B)$ as in Theorem \ref{quotientha}, such that $\mathscr{B}$ is braided tensor equivalent to $\mathscr{Z}(K,H,B)$. 
\end{corollary}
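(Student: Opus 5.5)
The plan is to reduce the statement to Theorem \ref{mainclass}. To do this, we combine it with two standard facts: the reconstruction theorem for finite tensor categories with a fiber functor, and the description of braided group scheme-theoretical categories as subcategories of centers.

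\emph{Step 1.} Since $\mathscr{B}$ admits a fiber functor to ${\rm Vect}$, Tannakian reconstruction gives a finite dimensional quasitriangular Hopf algebra $(A,R_A)$ with $\mathscr{B}\simeq\Rep(A)$ as braided tensor categories.

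\emph{Step 2.} Next we use that $\mathscr{B}$ is group scheme-theoretical. By definition \cite{Ge}, $\mathscr{B}$ is categorically Morita equivalent to ${\rm Coh}(G)$ for some finite group scheme $G$ (possibly twisted by a $3$-cocycle $\omega$). Since $\mathscr{B}$ has a fiber functor, it is Morita equivalent to $\Rep(A)$ and hence, via $A^*$, to a pointed-type category. We want to force $\omega$ to be trivial. I would argue this as follows: the dual of $\mathscr{B}$ with respect to the module category ${\rm Vect}$ is $\Rep(A^*)$, which again has a fiber functor. So we may choose the Morita partner ${\rm Coh}(G)$ to be untwisted, replacing $G$ by a suitable group scheme obtained from the group scheme-theoretical data $(G,\omega,L,\psi)$ of $\mathscr{B}$.

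\emph{Step 3.} A braided category $\mathscr{B}$ admits a canonical braided tensor functor $\mathscr{B}\to\mathscr{Z}(\mathscr{B})$. Morita invariance of the center gives $\mathscr{Z}(\mathscr{B})\simeq\mathscr{Z}({\rm Coh}(G))\simeq{\rm Coh}(G)^G\simeq\mathscr{Z}(G)$ by (\ref{the center is gequiv}). The functor $\mathscr{B}\to\mathscr{Z}(\mathscr{B})$ is fully faithful: it is the embedding $X\mapsto (X,c_{-,X})$. It therefore identifies $\mathscr{B}$ with a braided tensor subcategory of $\mathscr{Z}(G)$. Here the braiding is the restriction of that of $\mathscr{Z}(G)$.

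\emph{Step 4.} By Theorem \ref{mainclass}, this subcategory equals $\mathscr{Z}(K,H,B)$ for a unique triple $(K,H,B)$. Its braiding is the one induced from $\mathscr{Z}(G)$. This is exactly the one given by the $R$-matrix $R(K,H,B)=(\theta\ot\theta)(R)$ of Corollary \ref{main0}, since restriction along $\theta$ is a braided functor. Hence $\mathscr{B}\simeq\mathscr{Z}(K,H,B)$ as braided tensor categories.

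The main obstacle is Step 2, together with the identification $\mathscr{Z}(\mathscr{B})\simeq\mathscr{Z}(G)$ with \emph{trivial} twist. In general, the Morita class of a group scheme-theoretical category involves $\omega$, and the center would then be $\Rep(D^\omega(G))$ rather than $\Rep(D(G))$. I would try first to use the fiber functor: it gives $\mathscr{B}^*_{\rm Vect}\simeq\Rep(A^*)$. If instead the intended meaning of \emph{group scheme-theoretical} in \cite{Ge} already uses ${\rm Coh}(G)$ untwisted, this step is immediate. Otherwise one restricts to that case, in line with Remark \ref{future}.
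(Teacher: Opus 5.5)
Your outline matches the paper's proof: embed $\mathscr{B}$ into $\mathscr{Z}(\mathscr{B})$ via $X\mapsto(X,c_{-,X})$, identify $\mathscr{Z}(\mathscr{B})$ with $\mathscr{Z}(G)$, and apply Theorem~\ref{mainclass}. Steps~1, 3 and~4 are fine, including your observation that the induced braiding is the one given by $R(K,H,B)=(\theta\ot\theta)(R)$. The gap is in Step~2, which is the only non-formal input, and the argument you sketch for it does not work.

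The fiber functor gives $\mathscr{B}^*_{{\rm Vect}}\simeq\Rep(A^*)$. But $\Rep(A^*)$ is in general neither pointed nor of the form ${\rm Coh}(G')$. The fact that it again admits a fiber functor says nothing about the $3$-cocycle in the group scheme-theoretical data of $\mathscr{B}$, so ``we may choose the Morita partner ${\rm Coh}(G)$ to be untwisted'' does not follow. A fiber functor also does not force $\omega$ to be trivial in a given presentation. Already for $p=0$, $\Vec_{\mathbb{Z}/4}$ is Morita equivalent to $\Vec^{\omega}_{\mathbb{Z}/2\times\mathbb{Z}/2}$ with $\omega$ cohomologically nontrivial. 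Indeed, its center contains a Lagrangian subcategory equivalent to $\Rep(\mathbb{Z}/2\times\mathbb{Z}/2)$, and it has invertible simples $X$ with $c_{X,X}=\sqrt{-1}$, which $\mathscr{Z}(\Vec_{\mathbb{Z}/2\times\mathbb{Z}/2})$ does not have. So one really has to produce a different group scheme $G$, and your Step~2 contains no way of doing so.

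The paper does not use a Morita-theoretic argument here. It takes from \cite{Ge} that, because $\mathscr{B}$ is tensor equivalent to the representation category of a Hopf algebra, $\mathscr{Z}(\mathscr{B})$ is braided equivalent to $\mathscr{Z}(G)=\Rep(D(G))$ for \emph{some} finite group scheme $G$. That citation is exactly what your Step~2 needs; with it, your Steps~3 and~4 go through unchanged. Your fallback of restricting to presentations with trivial twist would only prove the corollary for that smaller class of categories, not the statement as given.
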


\begin{proof}
Since any finite braided group scheme-theoretical category $\mathscr{B}$ is a braided tensor subcategory 
of its center $\mathscr{Z}\left(\mathscr{B}\right)$, and by \cite{Ge} the latter is braided tensor equivalent to $\mathscr{Z}(G)$ for some finite group scheme $G$ (since $\mathscr{B}$ is tensor equivalent to a representation category of a Hopf algebra), the claim follows from Theorem \ref{mainclass}.
\end{proof}

\begin{corollary}\label{main-2}
The following hold:
\begin{enumerate}
\item
$\mathscr{Z}(K',H',B')\subseteq \mathscr{Z}(K,H,B)$ if and only if $K'\subseteq K$ and $H\subseteq H'$ are normal subgroup schemes of $G$, and $\iota^{\sharp}_{K',K}\circ B=B'\circ \iota_{H,H'}:\mathbf{k}[H]\to \mathscr{O}(K')$.
\item
$\mathscr{Z}(K,H,B)\cap \mathscr{Z}\left(K',H',B'\right)=\mathscr{Z}\left(L,HH',\mathbf{B}\right)$, where $\mathbf{B}$ is defined in (\ref{Bint}).
\end{enumerate}
\end{corollary}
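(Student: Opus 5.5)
The plan is to translate both parts into statements about Hopf quotient pairs of $D(G)$ and then apply Corollary \ref{lattice}, Corollary \ref{lattice2} and Proposition \ref{intersection}. The dictionary is the correspondence \cite[Proposition 2]{BN} already used in Theorem \ref{mainclass}. Under it, a tensor subcategory $\Rep(D_2)\subseteq \Rep(D(G))$ (pulled back along $\varphi_2$) is contained in $\Rep(D_1)$ (pulled back along $\varphi_1$) if and only if $\ker\varphi_1\subseteq\ker\varphi_2$. Equivalently, there is a (necessarily unique and surjective) Hopf algebra map $\psi:D_1\twoheadrightarrow D_2$ with $\psi\varphi_1=\varphi_2$. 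I will state this explicitly. The nontrivial direction goes as follows: a $D(G)$-module lies in $\Rep(D_2)$ exactly when it is annihilated by $\ker\varphi_2$. Applying this to the regular module $D_2$ together with the fact that $D_2$ is a faithful $D_2$-module shows that the annihilator ideals determine the subcategory.

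For part (1), suppose first that $\mathscr{Z}(K',H',B')\subseteq\mathscr{Z}(K,H,B)$. By the dictionary there is a surjective Hopf map $\varphi:D(K,H,B)\twoheadrightarrow D(K',H',B')$ with $\varphi\theta=\theta'$. Corollary \ref{lattice} then says that $(D(K',H',B'),\theta')$ is equivalent to some $(D(K'',H'',B''),\theta'')$ with $K''\subseteq K$, $H\subseteq H''$ and $\iota^\sharp_{K'',K}B=B''\iota_{H,H''}$. Lemma \ref{uniquenesstriples} gives $(K'',H'',B'')=(K',H',B')$, which yields the stated conditions. Normality is automatic, since these are triples as in Theorem \ref{quotientha}. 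Conversely, if the conditions hold, Corollary \ref{lattice2} applied with the roles of the two triples arranged appropriately produces a surjection $D(K,H,B)\twoheadrightarrow D(K',H',B')$ compatible with $\theta,\theta'$. The dictionary then gives the inclusion of subcategories.

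For part (2), the intersection $\mathscr{Z}(K,H,B)\cap\mathscr{Z}(K',H',B')$ is a tensor subcategory. It corresponds to the largest Hopf quotient pair through which both $\theta$ and $\theta'$ factor, namely the quotient by the Hopf ideal generated by $\ker\theta+\ker\theta'$. The reason is that a $D(G)$-module lies in both subcategories exactly when it is annihilated by both kernels. By Theorem \ref{mainclass} this intersection equals $\mathscr{Z}(K'',H'',B'')$ for some triple, and by part (1) this triple is characterized as the maximal common quotient. Proposition \ref{intersection}(1) shows that $\mathscr{Z}(L,HH',\mathbf{B})$ is contained in both subcategories. Proposition \ref{intersection}(2) shows it contains every common subcategory $\mathscr{Z}(K'',H'',B'')$. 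Hence it is the intersection.

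The main obstacle is the dictionary step, in particular making sure that inclusion of subcategories corresponds to factorization of quotient maps with the correct direction of arrows. Part (1) also needs a check that the pair of conditions on the triples is equivalent to the factorization. This requires both that Corollary \ref{lattice2} applies with the given compatibility equation and that the kernel description in Theorem \ref{quotientha}(2) is used correctly. Everything else is formal bookkeeping with Lemma \ref{uniquenesstriples}.
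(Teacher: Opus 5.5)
Your route is the paper's. Part (1) comes from the subcategory/quotient dictionary together with Corollary \ref{lattice}, Lemma \ref{uniquenesstriples} and Theorem \ref{mainclass}, and part (2) comes from Proposition \ref{intersection}. Part (1) is correct, and more complete than the paper's one-line proof. The converse direction really does require Corollary \ref{lattice2} with the roles of the two triples swapped, exactly as you set it up, and the paper's citation leaves that implicit.

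Part (2) has a genuine gap, which you inherit from the paper. The step ``Proposition \ref{intersection}(1) shows that $\mathscr{Z}(L,HH',\mathbf{B})$ is contained in both subcategories'' is false for $\mathbf{B}$ as defined in (\ref{psibbprime})--(\ref{Bint}). Deriving it from Corollary \ref{lattice2} needs $\iota^{\sharp}_{L,K}\circ B=\mathbf{B}\circ\iota_{H,HH'}$ and $\iota^{\sharp}_{L,K'}\circ B'=\mathbf{B}\circ\iota_{H',HH'}$. This is exactly the kind of compatibility you flagged in part (1), and here it fails.

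Take $(K',H',B')=(K,H,B)$ with $B\neq 1$, e.g.\ $G=K=H=\mathbb{G}_{a,1}$ and $B=B_{\lambda}$ with $\lambda\neq 0$. Since $B^*$ is an algebra map, $\beta_{B,B}(w)=B^*(w_1)B^*(S(w_2))=\varepsilon(w)1$, so $L=K$. But $\mu_{H,H}=\id$, so the same computation gives $\mathbf{B}_{B,B}(w)=\varepsilon(w)1$, i.e.\ $\mathbf{B}=1$. Then (2) would assert $\mathscr{Z}(K,H,B)=\mathscr{Z}(K,H,1)$, contradicting Theorem \ref{mainclass}. Replacing $\overline{B'}$ by $B'^*$ does not repair this, since one would get $B^{\star 2}\neq B$.

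The subgroup schemes $L$ and $HH'$ are correct; only the third entry must change. It should be the glued map
\[
\mathbf{B}(vv')=\iota^{\sharp}_{L,K}(B(v))\,\iota^{\sharp}_{L,K'}(B'(v')),\qquad v\in\mathbf{k}[H],\ v'\in\mathbf{k}[H'].
\]
It is well defined because $\mathbf{k}[HH']=\mathbf{k}[H]\otimes_{\mathbf{k}[H\cap H']}\mathbf{k}[H']$, and triviality of $\beta_{B,B'}$ on $\mathbf{k}[L]$ says exactly that $\iota^{\sharp}_{L,K}B$ and $\iota^{\sharp}_{L,K'}B'$ agree on $\mathbf{k}[H\cap H']$. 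It is a $G$-equivariant Hopf algebra map because $H$ and $H'$ centralize $L\subseteq K\cap K'$ and $B,B'$ are $G$-equivariant.

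With this $\mathbf{B}$, the two compatibilities above hold by construction. The maximality step also invokes Corollary \ref{lattice2} and needs $\iota^{\sharp}_{K'',L}\circ\mathbf{B}=B''\circ\iota_{HH',H''}$, which the paper also never checks. This follows from the two relations supplied by Corollary \ref{lattice}: both sides are algebra maps that agree on $\mathbf{k}[H]$ and on $\mathbf{k}[H']$, and these generate $\mathbf{k}[HH']$. Your argument for (2) then goes through as written.
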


\begin{proof}
(1) This follows from Corollary \ref{lattice} and Theorem \ref{mainclass}.

(2) This follows from Proposition \ref{intersection}.
\end{proof}

\subsection{The M\"uger centralizer of $\mathscr{Z}(K,H,B)$}
For each Hopf algebra $D(K,H,B)$, recall the Hopf algebra $D(H,K,\bar{B})$ from Proposition \ref{genproprs}(6).

\begin{theorem}\label{main2}
For each tensor subcategory $\mathscr{Z}(K,H,B)\subseteq \mathscr{Z}(G)$, the tensor subcategory $\mathscr{Z}(H,K,\bar{B})\subseteq \mathscr{Z}(G)$ is its M\"uger centralizer in $\mathscr{Z}(G)$. 
\end{theorem}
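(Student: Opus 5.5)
The plan is a two-step argument: containment via the $R$-matrix, then equality by comparing Frobenius--Perron dimensions.

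\textbf{Criterion and reduction.} For a factorizable Hopf algebra $D(G)$ with $R$-matrix $R$, objects $X,Y$ centralize each other iff $c_{Y,X}c_{X,Y}$ acts as the identity on $X\ot Y$, i.e.\ iff $R_{21}R$ acts trivially on $X\ot Y$. For the tensor subcategories $\mathscr{Z}(K,H,B)$ and $\mathscr{Z}(H,K,\bar{B})$ pulled back along $\theta$ and $\theta'$, it suffices (testing against the regular modules) to show
$$(\theta\ot\theta')(R_{21}R)=1\ot 1 \in D(K,H,B)\ot D(H,K,\bar{B}).$$
This gives $\mathscr{Z}(H,K,\bar B)\subseteq \mathscr{Z}(K,H,B)'$.

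\textbf{Containment.} I would compute $(\theta\ot\theta')(R_{21}R)$ directly from Theorem \ref{quotientha}(2). Write
$$R_{21}R=\sum_{u,u'}(\delta_{u'}\bowtie 1)(1\bowtie u)\ot(1\bowtie u')(\delta_u\bowtie 1).$$
Applying $\theta$ to $\delta_{u'}$ gives $q_K(\delta_{u'})$, which kills all $u'\notin \B_K$. Applying $\theta'$ to $1\bowtie u'$ for $u'\in\mathbf{k}[K]$ gives $\bar{B}(u')\#1$, since $\eta_K(u')=u'$ and $\pi_K(u')=\varepsilon$. Symmetrically, $\theta'(\delta_u)=q_H(\delta_u)$ restricts to $u\in\B_H$, and $\theta(1\bowtie u)=B(u)\#1$.

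The element therefore becomes
$$\sum_{u\in\B_H,\,u'\in\B_K} q_K(\delta_{u'})B(u)\ot \bar B(u')q_H(\delta_u),$$
with everything lying in the commutative subalgebras $\mathscr{O}(K)\ot\mathscr{O}(H)$. Under the identification $\mathscr{O}(K)\ot\mathscr{O}(H)\cong \Hom(\mathbf{k}[K]\ot\mathbf{k}[H],\mathbf{k})$, the two sums become the canonical elements of the pairings $\langle B(v),w\rangle$ and $\langle \bar B(w),v\rangle=\langle B(v),S(w)\rangle$. These are convolution inverses of each other, so their product is $1\ot1$. Keeping track of which tensor factor carries which element is the fiddly part of this step.

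\textbf{Equality by dimension.} Since $\mathscr{Z}(G)$ is non-degenerate (Shimizu), the Müger centralizer satisfies
$$\mathrm{FPdim}(\mathscr{C})\,\mathrm{FPdim}(\mathscr{C}')=\mathrm{FPdim}(\mathscr{Z}(G))=|G|^2.$$
By Corollary \ref{main-1},
$$|K|[G:H]\cdot|H|[G:K]=|G|^2 .$$
Combined with the containment above, this forces equality. The main obstacle I expect is the containment computation: choosing compatible dual bases $\B_H,\B_K\subseteq\B$ and checking the antipode convention in $\bar B=B^*\circ S$ against the order in $R_{21}R$. If a sign or ordering problem appears there, I would instead verify that $R_{21}R$ maps to the canonical element of $\langle B\,\cdot,\cdot\rangle\star\langle \bar{B}\,\cdot,\cdot\rangle$ evaluated pointwise on $\mathbf{k}[H]\ot\mathbf{k}[K]$.
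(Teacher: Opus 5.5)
Your proposal is correct and follows essentially the same route as the paper: the $R_{21}R$ criterion tested on regular modules, the reduction of $(\theta\otimes\bar{\theta})(R_{21}R)$ to $\sum_{u\in\B_H,\,u'\in\B_K}q_K(\delta_{u'})B(u)\otimes\bar{B}(u')q_H(\delta_u)$, the pairing (convolution-inverse) computation showing this equals $1\otimes 1$, and equality via Frobenius--Perron dimensions and \cite[Theorem 4.9]{Sh}. The only cosmetic difference is that you factor $R_{21}R$ before applying the algebra maps $\theta,\bar{\theta}$, which lets you avoid the coadjoint term $u_1\coadj\delta_{\tilde{u}}$ that the paper writes out and then discards.
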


\begin{proof}
Fix $D:=D(K,H,B)$, and set $\bar{D}:=D(H,K,\bar{B})$. Also, let $\mathscr{Z}:=\mathscr{Z}(K,H,B)$, and $\bar{\mathscr{Z}}:=\mathscr{Z}(H,K,\bar{B})$. Let $\theta: D(G) \twoheadrightarrow D$ and $\bar{\theta}:D(G) \twoheadrightarrow \bar{D}$ be the Hopf algebra quotients constructed in Theorem \ref{quotientha}.

\vspace{0.2cm}
\noindent$\mathbf{Claim\,1.}$
$\mathscr{Z}$ and $\bar{\mathscr{Z}}$ centralize each other if and only if $(\theta \otimes \bar{\theta})(R_{21}R) = 1 \otimes 1$.
\vspace{0.2cm}

\noindent{\em Proof.} 
Recall that the square braiding in $\mathscr{Z}(G)$ is defined by multiplication by $R_{21}R$. 
If $\mathscr{Z}$ and $\bar{\mathscr{Z}}$ centralize each other, then $(\theta \otimes \bar{\theta})(R_{21}R)$ acts as the identity on $X \otimes Y$ for all $X\in \mathscr{Z}$ and $Y \in \bar{\mathscr{Z}}$. In particular, we can take the faithful representation $D\otimes \bar{D}$ of $D\otimes \bar{D}$, and get that $(\theta \otimes \bar{\theta}) (R_{21}R) = 1 \otimes 1$. 

Conversely, if $(\theta \otimes \bar{\theta}) (R_{21}R) = 1 \otimes 1$, then $R_{21}R$ acts by the identity on $X \otimes Y$ for all $X \in \mathscr{Z}$ and $Y \in \bar{\mathscr{Z}}$, so $\mathscr{Z}$ and $\bar{\mathscr{Z}}$ centralize each other, as claimed. \qed
\vspace{0.2cm}

Now fix a basis $\B$ for $\mathbf{k}[G]$, such that $\B$ contains bases $\B_H$ and $\B_K$ for $\mathbf{k}[H]$ and $\mathbf{k}[K]$, respectively, and let $\{\delta_u\mid u\in \B\}$ be the dual basis for $\mathscr{O}(G)$. By (\ref{rmatrix}), we have
$$R_{21}R= \sum_{u,\tilde{u} \in \B} (\delta_u\bowtie \tilde{u}) \otimes ((u_1\coadj \delta_{\tilde{u}})\bowtie u_2).$$ 
We claim that $(\theta \otimes \bar{\theta}) (R_{21}R)=1 \otimes 1$. Indeed, we have
\begin{eqnarray*}
\lefteqn{(\theta \otimes \bar{\theta}) (R_{21}R)=\sum_{u,\tilde{u} \in \B} \theta(\delta_u\bowtie \tilde{u}) \otimes \bar{\theta}((u_1\coadj \delta_{\tilde{u}})\bowtie u_2)}\\ 
& = & \sum_{u,\tilde{u} \in \B} \left(q_K(\delta_u) B(\eta_H(\tilde{u}_1)) \# \pi_H(\tilde{u}_2)\right) \otimes \left(q_H(u_1\coadj \delta_{\tilde{u}}) \bar{B}(\eta_K(u_2))\# \pi_K(u_3)\right)\\ 
& = & \sum_{u\in \B_K,\,\tilde{u}\in \B_H} \left(q_K(\delta_u) B(\eta_H(\tilde{u}_1)) \# \pi_H(\tilde{u}_2)\right) \otimes \left(q_H(u_1\coadj \delta_{\tilde{u}}) \bar{B}(\eta_K(u_2))\# \pi_K(u_3)\right)\\ 
& = & \sum_{u\in \B_K,\,\tilde{u}\in \B_H} \left(q_K(\delta_u) B(\eta_H(\tilde{u})) \# 1\right) \otimes \left(q_H(\delta_{\tilde{u}}) \bar{B}(\eta_K(u))\# 1\right)\\ 
& = & \sum_{u\in \B_K,\,\tilde{u}\in \B_H} \left(q_K(\delta_u) B(\tilde{u}) \# 1\right) \otimes \left(q_H(\delta_{\tilde{u}}) \bar{B}(u)\# 1\right).
\end{eqnarray*}
To show that 
$\sum_{u\in \B_K,\,\tilde{u}\in \B_H} \left(q_K(\delta_u) B(\tilde{u})\# 1\right)\otimes \left(q_H(\delta_{\tilde{u}}) \bar{B}(u)\# 1\right)=(1\# 1)\otimes (1\# 1)$, 
we compute for any $v\in \mathbf{k}[K]$ and $w\in \mathbf{k}[H]$,
\begin{eqnarray*}
\lefteqn{\sum_{u\in \B_K,\,\tilde{u}\in \B_H} \left\langle q_K(\delta_u) B(\tilde{u}),v\right\rangle \left\langle q_H(\delta_{\tilde{u}}) \bar{B}(u),w\right\rangle}\\
& = & \sum_{u\in \B_K,\,\tilde{u}\in \B_H} \left\langle q_K(\delta_u),v_1\right\rangle \left\langle B(\tilde{u}),v_2\right\rangle \left\langle q_H(\delta_{\tilde{u}}),w_1\right\rangle \left\langle \bar{B}(u),w_2\right\rangle\\
& = & \sum_{u\in \B_K,\,\tilde{u}\in \B_H} \left\langle q_K(\delta_u)\left\langle u,\bar{B}^*(w_2)\right\rangle,v_1\right\rangle  \left\langle q_H(\delta_{\tilde{u}})\left\langle \tilde{u},B^*(v_2)\right\rangle,w_1\right\rangle\\
& = & \left\langle \bar{B}^*(w_2),v_1\right\rangle \left\langle B^*(v_2),w_1\right\rangle 
=\left\langle \bar{B}^*(w_2),v_1\right\rangle \left\langle B(w_1),v_2\right\rangle=\left\langle \bar{B}^*(w_2)B(w_1),v\right\rangle\\
& = & \left\langle B(S(w_2))B(w_1),v\right\rangle =\left\langle B(S(w_2)w_1),v\right\rangle=\varepsilon(v)\varepsilon(w).
\end{eqnarray*}
Thus, $(\theta \otimes \bar{\theta}) (R_{21}R) = 1 \otimes 1$, as claimed.

Hence, it follows from Claim 1 that $\mathscr{Z}\subseteq \bar{\mathscr{Z}}'$, and since both categories have the same Frobenius-Perron dimension by Corollary \ref{main-1} and \cite[Theorem 4.9]{Sh}, an equality holds, as claimed.
\end{proof}

Recall (see, e.g., \cite{EGNO}) that a finite braided tensor category is called {\em non-degenerate} if its M\"uger center is trivial.

\begin{corollary}\label{main4}
For each tensor subcategory $\mathscr{Z}(K,H,B)\subseteq \mathscr{Z}(G)$, the following hold:
\begin{enumerate}
\item
$\left(D(K,H,B),R(K,H,B)\right)$ is triangular if and only if $\mathscr{Z}(K,H,B)$ is symmetric, if and only if, $K\subseteq H$ and $B\circ \iota_{K,H}=\iota_{K,H}^{\sharp}\circ \overline{B}:\mathbf{k}[K]\to \mathscr{O}(K)$. 

Furthermore, in this case $K$ is commutative, so $D(K,H,B)=\mathbf{k}[\widetilde{G}]$ as in Proposition \ref{genproprs}(7), and we have
$$R(K,H,B)=\sum_{u \in \B_K} (B(u)\#1)\otimes (q_K(\delta_u)\# 1)\in\mathbf{k}[K^{\vee}]^{\otimes 2}\quad\text{and}$$
$$V(K,H,B)=\sum_{u \in \B_K} q_K(S(\delta_u))B(u)\#1\in\mathbf{k}[K^{\vee}].$$
\item
$\left(D(K,H,B),R(K,H,B)\right)$ is factorizable if and only if $\mathscr{Z}(K,H,B)$ is non-degenerate, if and only if, $HK=G$ and 
$$\beta _{B,\overline{B}}=\left(\iota^{\sharp}_{K\cap H,K} B^* \iota_{K\cap H,H}\right)\star\left(\iota^{\sharp}_{K\cap H,H} B \iota_{K\cap H,K}\right):\mathbf{k}[K\cap H]\to \mathscr{O}(K\cap H)$$
is a Hopf algebra isomorphism. In particular, if $\mathscr{Z}(K,H,B)$ is non-degenerate then $K\cap H$ is a self dual central subgroup scheme of $G$.
\item
$\mathscr{Z}(K,H,B)$ is Lagrangian if and only if $K=H$ and $B=\bar{B}$. In particular, if $\mathscr{Z}(K,H,B)$ is Lagrangian then $K=H$ is commutative, so 
$$D(H,H,B)=\mathbf{k}[H^{\vee}]\#^{\tau}_{\sigma}\mathbf{k}[G/H]=\mathbf{k}[\widetilde{G}],$$ 
as in Proposition \ref{genproprs}(7).
\end{enumerate}
\end{corollary}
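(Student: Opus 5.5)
The plan is to reduce all three parts to the centralizer computation of Theorem \ref{main2} and the lattice description of Corollary \ref{main-2}. Throughout we write $\mathscr{Z}:=\mathscr{Z}(K,H,B)$, so that $\mathscr{Z}'=\mathscr{Z}(H,K,\bar B)$.

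\textbf{Part (1).} The M\"uger center of $\mathscr{Z}$ is $\mathscr{Z}\cap\mathscr{Z}'$. Hence $\mathscr{Z}$ is symmetric if and only if $\mathscr{Z}\subseteq\mathscr{Z}'$. By Corollary \ref{main-2}(1) applied to $\mathscr{Z}(K,H,B)\subseteq\mathscr{Z}(H,K,\bar B)$, this holds if and only if $K\subseteq H$ (both orders of inclusion reduce to this) and $\iota^\sharp_{K,H}\circ\bar B=B\circ\iota_{K,H}$ as maps $\mathbf{k}[K]\to\mathscr{O}(K)$.

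Triangularity of $(D(K,H,B),R(K,H,B))$ means $R_{21}R=1\otimes 1$ in $D^{\otimes2}$. Since $D\otimes D$ is a faithful module, the argument of Claim 1 in the proof of Theorem \ref{main2} shows this is equivalent to the square braiding being trivial on $\mathscr{Z}$, that is, to symmetry.

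For the \lq\lq furthermore'': $K\subseteq H$ and $H,K$ centralize each other, so $K$ centralizes itself and is commutative. Proposition \ref{genproprs}(7) then gives $D=\mathbf{k}[\widetilde G]$. For $u\in\B_K\subseteq\mathbf{k}[H]$ we have $\eta_H(u_1)\otimes\pi_H(u_2)=u\otimes 1$, and this simplifies the formulas of Corollary \ref{main0} to the stated $R$ and $V$.

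\textbf{Part (2).} For the first equivalence, factorizability is equivalent to non-degeneracy of $\Rep$, by Shimizu \cite{Sh}. Non-degeneracy means $\mathscr{Z}\cap\mathscr{Z}'=\Vec$. By Corollary \ref{main-2}(2), the intersection is $\mathscr{Z}(L,HK,\mathbf{B})$, where $L\subseteq K\cap H$ is defined by $\ker\beta_{B,\bar B}=\mathbf{k}[L]^+\mathbf{k}[K\cap H]$.

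By Corollary \ref{main-1} its dimension is $|L|[G:HK]$. This equals $1$ if and only if $HK=G$ and $L=1$, i.e. $\beta_{B,\bar B}$ is injective. Since $\mathbf{k}[K\cap H]$ and $\mathscr{O}(K\cap H)$ have equal dimension, injectivity is the same as being an isomorphism.

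The expression for $\beta_{B,\bar B}$ has to be unwound from (\ref{phibbprime}) with $K'=H$, $H'=K$, $B'=\bar B$. The point is to use $\overline{\bar B}=B$ (from $\bar B=B^*S$ and involutivity) so that the second convolution factor becomes $\iota^\sharp B\iota$.

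Finally, if $\beta$ is an isomorphism, then $K\cap H$ is commutative, since it centralizes itself. It is central because it commutes with both $H$ and $K$, hence with $HK=G$. It is self-dual via $\beta:\mathbf{k}[K\cap H]\cong\mathscr{O}(K\cap H)$.

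\textbf{Part (3).} Lagrangian means $\mathscr{Z}=\mathscr{Z}'$ (equivalently, symmetric with $\mathrm{FPdim}^2=\mathrm{FPdim}\,\mathscr{Z}(G)$). By the bijection of Theorem \ref{mainclass} this holds if and only if $(K,H,B)=(H,K,\bar B)$, i.e. $K=H$ and $B=\bar B$. Commutativity of $K=H$ follows as in part (1), and Proposition \ref{genproprs}(7) gives $D(H,H,B)=\mathscr{O}(H)^{\rm cop}\#\mathbf{k}[G/H]=\mathbf{k}[H^\vee]\#^{\tau}_{\sigma}\mathbf{k}[G/H]=\mathbf{k}[\widetilde G]$.

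The main obstacle is the identification of $\beta_{B,\bar B}$ in part (2) and the dimension bookkeeping in the intersection formula. In particular one must check that $L=1$ corresponds exactly to $\ker\beta$ being trivial, using that a Hopf map between finite cocommutative/commutative Hopf algebras with trivial Hopf kernel is injective.
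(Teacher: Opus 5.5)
Your proposal is correct and follows essentially the same route as the paper. All three parts are reduced to the centralizer formula $\mathscr{Z}(K,H,B)'=\mathscr{Z}(H,K,\bar{B})$ of Theorem \ref{main2}, combined with the inclusion and intersection criteria of Corollary \ref{main-2} for (1)--(2) and the bijection of Theorem \ref{mainclass} for (3). The extra details you supply are sound and only make explicit what the paper leaves implicit: triangularity versus symmetry via the faithful module $D\otimes D$, Shimizu's theorem for factorizability, the identity $\overline{\bar{B}}=B$ when unwinding $\beta_{B,\bar{B}}$, and the Frobenius--Perron count $|L|[G:HK]=1$ in place of comparing with $(1,G,1)$.
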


\begin{proof}
Fix $\mathscr{Z}:=\mathscr{Z}(K,H,B)\subseteq \mathscr{Z}(G)$.

(1) Since $\mathscr{Z}$ is symmetric if and only if $\mathscr{Z}\subseteq \mathscr{Z}'$, if and only if by Theorem \ref{main2}, $\mathscr{Z}(K,H,B)\subseteq \mathscr{Z}(H,K,\overline{B})$, the claim follows from Corollary \ref{main-2}(1).

(2) Since $\mathscr{Z}$ is non-degenerate if and only if $\mathscr{Z}'\cap \mathscr{Z}=\Vec$, if and only if by Theorem \ref{main2}, $\mathscr{Z}(K,H,B)\cap \mathscr{Z}(H,K,\overline{B})=\mathscr{Z}(1,G,1)$,  
and since by Corollary \ref{main-2}(2), $\mathscr{Z}(K,H,B)\cap \mathscr{Z}(H,K,\overline{B})=\mathscr{Z}(L,KH,\mathbf{B})$, we get that $\mathscr{Z}(K,H,B)$ is non-degenerate if and only if $G=KH$ and $L=1$, so $\beta _{B,\overline{B}}:\mathbf{k}[K\cap H]\to \mathscr{O}(K\cap H)$ is injective, hence a Hopf algebra isomorphism, as claimed.

(3) Since $\mathscr{Z}$ is Lagrangian if and only if $\mathscr{Z}'=\mathscr{Z}$, if and only if by Theorem \ref{main2}, $\mathscr{Z}(H,K,\overline{B})=\mathscr{Z}(K,H,B)$, the claim follows from Theorem \ref{mainclass}.
\end{proof}

\subsection{The simples and projectives of $\mathscr{Z}(K,H,B)$}
Retain the notation from \S\ref{sec:The representation category of D(G)}. 

Fix a triple $\left(K,H,B\right)$ as in Theorem \ref{quotientha}. Recall that $K\subseteq G$ is a normal subgroup scheme, so $C_g\subset K$ for every $g\in K(\mathbf{k})$, and let ${\rm C}_K(\mathbf{k}):=\{C_g\mid g\in K(\mathbf{k})\}\subseteq {\rm C}(\mathbf{k})$. 

Fix $C_g\in {\rm C}_K(\mathbf{k})$, with representative $g\in C_g(\mathbf{k})$, and let $(M,\rho_M)\in {\rm Corep}(\mathscr{O}(G_g))$. Recall that for $m\in M$, we write $\rho_M(m)=m^{(-1)}\ot m^{(0)}\in \mathscr{O}(G_g)\ot M$. Let 
\begin{equation*}
\begin{split}
& \rho_M^g:\mathscr{O}(C_g)\otimes M\to \mathscr{O}(C_g)\otimes M\otimes \mathscr{O}(G),\\
& \rho_M^g(c\otimes m)= 
\left(i_g^{-1}\right)^{\sharp}\alpha_{g}\left(i_g^{\sharp}(c)_1\mu_g\left(m^{(-1)}\right)_1 \right)\otimes
m^{(0)}\otimes i_g^{\sharp}(c)_2\mu_g\left(m^{(-1)}\right)_2,
\end{split}
\end{equation*}
and define $\mathbf{F}_{C_g}(M,\rho_M)=\left(\mathscr{O}(C_g)\otimes M,\rho_M^g\right)\in \mathscr{Z}(G)_{C_g}$. 
In particular, the action of $u\in\mathbf{k}[G]$ on $c\otimes m\in \mathscr{O}(C_g)\otimes M$ is given by
\begin{equation}\label{actGonCg}
\begin{split}
& u\cdot (c\ot m)=\left\langle u,i_g^{\sharp}(c)_2\mu_g\left(m^{(-1)}\right)_2\right\rangle \left(i_g^{-1}\right)^{\sharp}\alpha_{g}\left(i_g^{\sharp}(c)_1\mu_g\left(m^{(-1)}\right)_1 \right)\otimes
m^{(0)}.
\end{split}
\end{equation}
Note that since $g\in K$, $H\subseteq G$ is normal, and $H$ centralizes $K$, it follows that $H\subseteq G_g$ is a normal subgroup scheme, and $q_{G,H}=q_{G_g,H}\circ q_{G,G_g}$.

\begin{lemma}\label{expac}
For every $v\in\mathbf{k}[H]$ and $c\otimes m\in \mathscr{O}(C_g)\otimes M$, we have 
$$\iota_{H,G}(v)\cdot (c\ot m)=\left\langle \iota_{H,G_g}(v),m^{(-1)}\right\rangle
c\otimes m^{(0)}=c\otimes m\cdot \iota_{H,G_g}(v).$$
\end{lemma}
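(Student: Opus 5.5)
Start from the explicit action formula (\ref{actGonCg}) with $u=\iota_{H,G}(v)$. The pairing $\langle \iota_{H,G}(v), b\rangle$ equals $\langle v, q_{G,H}(b)\rangle$, and $q_{G,H}$ is an algebra map. Hence the right-hand side of (\ref{actGonCg}) becomes
$$\left\langle v_1, q_{G,H}\big(i_g^{\sharp}(c)_2\big)\right\rangle\left\langle v_2, q_{G,H}\big(\mu_g(m^{(-1)})_2\big)\right\rangle \left(i_g^{-1}\right)^{\sharp}\alpha_{g}\left(i_g^{\sharp}(c)_1\mu_g\left(m^{(-1)}\right)_1 \right)\otimes m^{(0)},$$
using that $\mathbf{k}[H]$ is a coalgebra and the pairing is a Hopf pairing. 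The plan is to treat the two factors separately.

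\textbf{First factor.} Show that $i_g^{\sharp}(c)_1\ot q_{G,H}(i_g^{\sharp}(c)_2)=i_g^{\sharp}(c)\ot 1$. Here $i_g^{\sharp}(c)\in \mathscr{O}(G/G_g)=\mathscr{O}(G)^{\mathrm{co}\,\mathscr{O}(G_g)}$, so $b_1\ot q_g(b_2)=b\ot 1$. Since $H\subseteq G_g$, we have $q_{G,H}=q_{G_g,H}\circ q_g$. Applying $\id\ot q_{G_g,H}$ then gives the claim. Alternatively one can argue directly: for $v\in \mathbf{k}[H]$, $\langle i_g^{\sharp}(c), u_1 v_1 g S(v_2)S(u_2)\rangle$ equals the same expression without $v$, because $v$ centralizes $g\in K$.

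\textbf{Second factor.} Use the colinearity (\ref{mu-gcolinear}) of $\mu_g$, namely $\mu_g(d)_1\ot q_g(\mu_g(d)_2)=\mu_g(d_1)\ot d_2$, and compose with $q_{G_g,H}$. This turns $\mu_g(m^{(-1)})_1\ot q_{G,H}(\mu_g(m^{(-1)})_2)$ into $\mu_g(m^{(-1)}_1)\ot q_{G_g,H}(m^{(-1)}_2)$.

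\textbf{Combining.} After these two substitutions the element becomes
$$\langle v, q_{G_g,H}(m^{(-1)}_2)\rangle \left(i_g^{-1}\right)^{\sharp}\alpha_g\big(i_g^{\sharp}(c)\mu_g(m^{(-1)}_1)\big)\ot m^{(0)}.$$
By $\mathscr{O}(G/G_g)$-linearity of $\alpha_g$ and (\ref{mu-greta-g}), $\alpha_g(i_g^{\sharp}(c)\mu_g(d))=i_g^{\sharp}(c)\varepsilon(d)$. The expression therefore collapses to $\langle \iota_{H,G_g}(v), m^{(-1)}\rangle\, c\ot m^{(0)}$, after using coassociativity of the coaction to merge $m^{(-1)}_1, m^{(-1)}_2$. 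The last equality is just the definition of the right $\mathbf{k}[G_g]$-action on $M$ induced by the left $\mathscr{O}(G_g)$-coaction, restricted to $\mathbf{k}[H]$.

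The main obstacle is bookkeeping the Sweedler indices in the first step: the pairing with $v$ must split correctly across the product $i_g^{\sharp}(c)_2\mu_g(m^{(-1)})_2$. Cocommutativity of $\mathbf{k}[H]$ makes the order of $v_1,v_2$ irrelevant. One must also check that $\mathscr{O}(G/G_g)$ is a left coideal subalgebra, so that the factor $i_g^{\sharp}(c)_1$ can be pulled out of $\alpha_g$.
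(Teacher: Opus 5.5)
Your proposal is correct and follows essentially the same route as the paper. You split the pairing with $\iota_{H,G}(v)$ across the product, kill the $i_g^{\sharp}(c)$-factor using its right $G_g$-coinvariance together with $H\subseteq G_g$, and then apply the colinearity (\ref{mu-gcolinear}) of $\mu_g$, the $\mathscr{O}(G/G_g)$-linearity of $\alpha_g$, and $\alpha_g\mu_g=\varepsilon$. The only difference is ordering: the paper pulls $i_g^{\sharp}(c)$ out of $\alpha_g$ before applying colinearity.

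Two cosmetic remarks. In your ordering the left coideal property is not actually needed, since only $i_g^{\sharp}(c)$ itself remains inside $\alpha_g$. Also, the final merging of $m^{(-1)}_1,m^{(-1)}_2$ uses the counit axiom, not coassociativity.
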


\begin{proof}
Since for every $c\in \mathscr{O}(C_g)$, $\Delta(i_g^{\sharp}(c))\in \mathscr{O}(G)\ot \mathscr{O}(G/G_g)$, it follows from (\ref{actGonCg}) that 
for every $v\in\mathbf{k}[H]\subseteq \mathbf{k}[G_g]$ and $c\otimes m\in \mathscr{O}(C_g)\otimes M$, we have
\begin{eqnarray*}
\lefteqn{\iota_{H,G}(v)\cdot (c\ot m)=\left\langle \iota_{H,G}(v),i_g^{\sharp}(c)_2\mu_g\left(m^{(-1)}\right)_2\right\rangle \left(i_g^{-1}\right)^{\sharp}\alpha_{g}\left(i_g^{\sharp}(c)_1\mu_g\left(m^{(-1)}\right)_1 \right)\otimes m^{(0)}}\\
& = & \left\langle \iota_{H,G}(v_1),i_g^{\sharp}(c)_2\right\rangle 
\left\langle \iota_{H,G}(v_2),\mu_g\left(m^{(-1)}\right)_2\right\rangle
\left(i_g^{-1}\right)^{\sharp}\alpha_{g}\left(i_g^{\sharp}(c)_1\mu_g\left(m^{(-1)}\right)_1 \right)\otimes m^{(0)}\\
& = & \varepsilon(i_g^{\sharp}(c)_2)\varepsilon(\iota_{H,G}(v_1)) 
\left\langle \iota_{H,G}(v_2),\mu_g\left(m^{(-1)}\right)_2\right\rangle
\left(i_g^{-1}\right)^{\sharp}\alpha_{g}\left(i_g^{\sharp}(c)_1\mu_g\left(m^{(-1)}\right)_1 \right)\otimes m^{(0)}\\
& = &  
\left\langle \iota_{H,G}(v),\mu_g\left(m^{(-1)}\right)_2\right\rangle
\left(i_g^{-1}\right)^{\sharp}\alpha_{g}\left(i_g^{\sharp}(c)\mu_g\left(m^{(-1)}\right)_1 \right)\otimes m^{(0)}\\
& = &  
\left\langle \iota_{H,G}(v),\mu_g\left(m^{(-1)}\right)_2\right\rangle
\left(i_g^{-1}\right)^{\sharp}\left(i_g^{\sharp}(c)\alpha_{g}\left(\mu_g\left(m^{(-1)}\right)_1 \right)\right)\otimes m^{(0)}\\
& = &  
\left\langle \iota_{H,G}(v),\mu_g\left(m^{(-1)}\right)_2\right\rangle
\left(i_g^{-1}\right)^{\sharp}\left(\alpha_{g}\left(\mu_g\left(m^{(-1)}\right)_1 \right)\right)c\otimes m^{(0)}\\
& = &  
\left\langle v,q_{G,H}(\mu_g\left(m^{(-1)}\right)_2)\right\rangle
\left(i_g^{-1}\right)^{\sharp}\left(\alpha_{g}\left(\mu_g\left(m^{(-1)}\right)_1 \right)\right)c\otimes m^{(0)}\\
& = &  
\left\langle v,q_{G_g,H}(q_{G,G_g}(\mu_g\left(m^{(-1)}\right)_2))\right\rangle
\left(i_g^{-1}\right)^{\sharp}\left(\alpha_{g}\left(\mu_g\left(m^{(-1)}\right)_1 \right)\right)c\otimes m^{(0)}\\
& = &  
\left\langle v,q_{G_g,H}(m^{(-1)}_2)\right\rangle
\left(i_g^{-1}\right)^{\sharp}\left(\alpha_{g}(\mu_g(m^{(-1)}_1))\right)c\otimes
m^{(0)}= \left\langle v,q_{G_g,H}(m^{(-1)})\right\rangle c\otimes m^{(0)}\\
& = & \left\langle \iota_{H,G_g}(v),m^{(-1)}\right\rangle
c\otimes m^{(0)}=c\otimes m\cdot \iota_{H,G_g}(v),
\end{eqnarray*}
as claimed, where we used (\ref{mu-gcolinear}) in the fourth to last equation and (\ref{mu-greta-g}) in the third to last equation.
\end{proof}

From the $G$-equivariant Hopf algebra map $B:\mathbf{k}[H]\to \mathscr{O}(K)$,  
we obtain the map
\begin{equation}\label{charBg}
B_g:\mathbf{k}[H]\to \mathbf{k},\quad v\mapsto \langle B(v),g\rangle,
\end{equation}
which is a $G_g$-invariant character since for every $u\in \mathbf{k}[G_g]$ and $v\in \mathbf{k}[H]$,
$$B_g({\rm ad_{\ell}}(S(u))(v))=\langle B({\rm ad_{\ell}}(S(u))(v)),g\rangle=\langle B(v),{\rm ad_{\ell}}(u)(g)\rangle=\langle B(v),g\rangle=B_g(v).$$
We also obtain the Drinfeld twist $\psi_g$ for $\mathscr{O}(G_g/H)$ (i.e., a Hopf $2$-cocycle for $\mathbf{k}[G_g/H]$), given by
\begin{equation}\label{twistJg}
\psi_g(x,y)=\langle \sigma(x,y),g\rangle ;\quad\forall x,y\in \mathbf{k}[G_g/H].
\end{equation}
Let $\mathbf{k}^{\psi_g}[G_g/H]=(\mathscr{O}(G_g/H)_{\psi_g})^*$ be the associated twisted group algebra, and let $\cdot$ denote its multiplication. 

\begin{lemma}\label{Help0}
For each $g\in K(\mathbf{k})$, the map 
$$p_g:\mathbf{k}[G_g]\to \mathbf{k}^{\psi_g}[G_g/H],\quad u\mapsto B_g(\eta_H(u_1))\pi_H(u_2),$$
is a surjective algebra homomorphism, which maps every $v\in \mathbf{k}[H]$ to $B_g(v)1$.
\end{lemma}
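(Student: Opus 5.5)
The plan is to deduce Lemma \ref{Help0} from the multiplicativity of $\theta$ in Theorem \ref{quotientha}(2), followed by evaluation at the closed point $g$. Two preliminary checks come first. Since $H\subseteq G_g$ is normal, we use the section $\gamma_H$ of $\pi_H:\mathbf{k}[G]\to\mathbf{k}[G/H]$ and may take it (or its restriction) to land in $\mathbf{k}[G_g]$ on $\mathbf{k}[G_g/H]$. Alternatively, we simply note that $\eta_H(u_1)\ot\pi_H(u_2)\in \mathbf{k}[H]\ot \mathbf{k}[G_g/H]$ for $u\in\mathbf{k}[G_g]$, because $u_1\ot u_2\in\mathbf{k}[G_g]^{\ot 2}$. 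Hence $p_g$ is well defined.

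The value on $\mathbf{k}[H]$ is immediate. For $v\in\mathbf{k}[H]$ we have $\eta_H(v)=v$ by Lemma \ref{gcphgh}(1) and $\pi_H(v)=\varepsilon(v)$, so $p_g(v)=B_g(v_1)\varepsilon(v_2)=B_g(v)1$. Surjectivity goes as follows. For $x\in\mathbf{k}[G_g/H]$, set $u:=\eta^{-1}_H$-twisted preimage as in the surjectivity part of Theorem \ref{quotientha}, namely $u'= B_g^{-1}(\eta_H(\gamma(x)_1))\gamma(x)_2$. Using that $B_g$ is a character (so convolution invertible with inverse $B_g\circ S$), one checks $p_g(\gamma(x))=B_g(\eta_H(\gamma(x)_1))x_2$. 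Multiplying by the convolution inverse then recovers $x$. More simply: $p_g$ is a linear map with $p_g(\gamma(x)_1) \ot\cdots$, and it is bijective up to the triangular change $u\mapsto B_g(\eta(u_1))u_2$, which is invertible.

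Multiplicativity is the main step. Evaluate the identity $\theta(1\bowtie u\tilde u)=\theta(1\bowtie u)\theta(1\bowtie\tilde u)$, proved in Theorem \ref{quotientha}, by applying $\langle -,g\rangle\ot\id$ to the $\mathscr{O}(K)$-factor. The left side gives $p_g(u\tilde u)$. For the right side, the product formula gives
$$B(\eta_H(u_1))\{\pi_H(u_2)\cdot B(\eta_H(\tilde u_1))\}\sigma(\pi_H(u_3),\pi_H(\tilde u_2))\#\pi_H(u_4\tilde u_3).$$
Since evaluation at the grouplike $g$ is an algebra map on $\mathscr{O}(K)$, this splits into a product of three scalars. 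By (\ref{Binv}), $\langle u_2*B(w),g\rangle=\langle B(w),{\rm ad_r}\text{-conjugate of }g\rangle$, and for $u\in\mathbf{k}[G_g]$ this reduces to $\varepsilon(u_2)B_g(w)$. This is exactly the $G_g$-invariance of $B_g$ noted after (\ref{charBg}), and it is where $u\in\mathbf{k}[G_g]$ is essential. What remains is $B_g(\eta_H(u_1))B_g(\eta_H(\tilde u_1))\psi_g(\pi_H(u_2),\pi_H(\tilde u_2))\,\pi_H(u_3)\pi_H(\tilde u_3)$. This is precisely $p_g(u)\cdot p_g(\tilde u)$ in the twisted group algebra, provided the multiplication of $\mathbf{k}^{\psi_g}[G_g/H]$ is $x\cdot y=\psi_g(x_1,y_1)x_2y_2$, which I will take as its definition (dual to $\mathscr{O}(G_g/H)_{\psi_g}$). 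Unitality follows from $p_g(1)=1$.

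I expect the obstacle to be bookkeeping rather than conceptual. First, one must confirm that the restriction of $\sigma$ to $\mathbf{k}[G_g/H]$, evaluated at $g$, really is a 2-cocycle matching the twisted multiplication convention. The cocycle property follows from Remark \ref{trivialtaubar}, or directly from the associativity just derived. Second, one must justify the invariance step $\langle x\cdot a,g\rangle=\varepsilon(x)\langle a,g\rangle$ for $x\in\mathbf{k}[G_g/H]$ and $a\in\im B$, which I will verify by pairing through (\ref{left coadjoint action2}).
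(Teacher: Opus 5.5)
Your multiplicativity argument is correct, but it is organized differently from the paper's. The paper works directly in $\mathbf{k}[G_g]$. It expands $\eta_H(u\tilde u)$ by Lemma \ref{gcphgh}(3), removes the conjugation term using the $G_g$-invariance of $B_g$, and identifies the remaining factor with $\psi_g$ via Lemma \ref{newlemma}(1).

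You instead observe that $p_g(u)=(\mathrm{ev}_g\otimes\id)\,\theta(1\bowtie u)$ for $u\in\mathbf{k}[G_g]$, and you specialize the multiplicativity of $\theta$ already proved in Theorem \ref{quotientha}. The only new input is $\langle \pi_H(u)\cdot a,g\rangle=\varepsilon(u)\langle a,g\rangle$ for $u\in\mathbf{k}[G_g]$. This holds for every $a\in\mathscr{O}(K)$, not only $a\in\im B$, because $\langle u*a,g\rangle=\langle\mu_K(a),S(u_1)gu_2\rangle$ and $S(u_1)gu_2=\varepsilon(u)g$.

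Your route is shorter, and it applies the invariance only to elements of $\mathbf{k}[G_g]$. In the paper's expansion the conjugating element $\gamma_H(\pi_H(u_2))_1$ need not lie in $\mathbf{k}[G_g]$, so that step tacitly uses the fact that the measuring factors through $\pi_H$. The paper's route, for its part, does not need the crossed-product formulas. Your remark that associativity and unitality of $\mathbf{k}^{\psi_g}[G_g/H]$ follow from multiplicativity plus surjectivity is a nice bonus, but it makes surjectivity load-bearing.

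The surjectivity paragraph, which the paper does not spell out, does not work as written.
\begin{itemize}
\item You cannot re-choose $\gamma_H$ so that $\gamma_H(\mathbf{k}[G_g/H])\subseteq\mathbf{k}[G_g]$. The section $\gamma_H$ is fixed in the construction, $p_g$ and $\psi_g$ depend on it, and no such compatible section has been shown to exist.
\item Even granting that choice, no convolution inverse is needed: colinearity and $\eta_H\gamma_H=\varepsilon$ give $p_g(\gamma_H(x))=x$ directly.
\item The inverse you propose is wrong. The convolution inverse of $B_g\circ\eta_H$ is $B_g\circ\eta_H^{-1}$, not something built from $B_g\circ S$, because $\eta_H$ is not a coalgebra map.
\end{itemize}

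Your ``triangular change'' idea is right and can be made precise as follows. Given $x\in\mathbf{k}[G_g/H]$, pick $u\in\mathbf{k}[G_g]$ with $\pi_H(u)=x$ and set $u':=B_g(\eta_H^{-1}(u_1))u_2\in\mathbf{k}[G_g]$. Since $B_g$ is multiplicative on $\mathbf{k}[H]$ and $\eta_H^{\pm1}$ take values in $\mathbf{k}[H]$, you get
\[
p_g(u')=B_g\bigl(\eta_H^{-1}(u_1)\eta_H(u_2)\bigr)\pi_H(u_3)=\pi_H(u)=x .
\]
This mirrors the surjectivity argument for $\theta$ in Theorem \ref{quotientha}.
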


\begin{proof}
By Lemma \ref{gcphgh}(3) and Lemma \ref{newlemma}(1), for every $u,\tilde{u}\in \mathbf{k}[G_g]$, we have
\begin{eqnarray*}
\lefteqn{p_g(u\tilde{u})=B_g(\eta(u_1\tilde{u}_1))\pi(u_2\tilde{u}_2)}\\
& = & B_g\left(\eta(u_1)\gamma(\pi(u_2))_1\eta(\tilde{u}_1)S(\gamma(\pi(u_2))_2)\eta\{\gamma(\pi(u_2))_3\gamma(\pi(\tilde{u}_2))\}\right)
\pi(u_3\tilde{u}_3)\\
& = & B_g\left(\eta(u_1)\eta(\tilde{u}_1)\eta\{\gamma(\pi(u_2))\gamma(\pi(\tilde{u}_2))\}\right)
\pi(u_3\tilde{u}_3)\\
& = & B_g(\eta(u_1)\eta(\tilde{u}_1))\langle \sigma(\pi(u_2),\pi(\tilde{u}_2)),g\rangle \pi(u_3\tilde{u}_3)\\
& = & B_g(\eta(u_1)\eta(\tilde{u}_1))\psi_g(\pi(u_2),\pi(\tilde{u}_2))\pi(u_3\tilde{u}_3)\\
& = & B_g(\eta(u_1)\eta(\tilde{u}_1))\pi(u_2)\cdot \pi(\tilde{u}_2)= p_g(u)\cdot p_g(\tilde{u}), 
\end{eqnarray*}
as claimed.
\end{proof}

The injective coalgebra homomorphism $p_g^*:\mathscr{O}(G_g/H)_{\psi_g}\xrightarrow{1:1}\mathscr{O}(G_g)$ induces an Abelian embedding 
${\rm Corep}(\mathscr{O}(G_g/H)_{\psi_g})\subseteq {\rm Corep}(\mathscr{O}(G_g))$, so that 
${\rm Corep}(\mathscr{O}(G_g/H)_{\psi_g})$ consists of all $G_g$-modules $M$ for which
\begin{equation}\label{hinv}
m\cdot \iota_{H,G_g}(v)=B_g(v)m;\quad \forall v\in \mathbf{k}[H],\,m\in M.
\end{equation}

Now for each $C_g\in {\rm C}_K(\mathbf{k})$, consider the full Abelian subcategory 
$$
\widetilde{\mathscr{Z}}(K,H,B)_{C_g}:=\mathbf{F}_{C_g}({\rm Corep}(\mathscr{O}(G_g/H)_{\psi_g}))\subseteq \mathscr{Z}(G)_{C_g},
$$
and let
\begin{equation}\label{Dg}
\widetilde{\mathscr{Z}}(K,H,B):=\bigoplus_{C_g\in {\rm C}_K(\mathbf{k})}\overline{\widetilde{\mathscr{Z}}(K,H,B)_{C_g}}\subseteq \mathscr{Z}(G).
\end{equation} 

\begin{lemma}\label{Help}
Fix $\widetilde{\mathscr{Z}}:=\widetilde{\mathscr{Z}}(K,H,B)$ as in (\ref{Dg}). Then the following hold:
\begin{enumerate}
\item
For each simple $M\in {\rm Corep}(\mathscr{O}(G_g/H)_{\psi_g})$, we have
$$P_{\widetilde{\mathscr{Z}}}(\mathbf{F}_{C_g}(M))=\mathscr{O}(K^{\circ})\ot\mathscr{O}(C_g(\mathbf{k}))\ot P_{(G_g/H,\psi_g)}(M).$$
\item
The Frobenius-Perron dimension of the Abelian subcategory $\widetilde{\mathscr{Z}}\subseteq \mathscr{Z}(G)$ is equal to $|K|[G:H]$.
\item
For each $C_g\in {\rm C}_K(\mathbf{k})$, we have $\overline{\widetilde{\mathscr{Z}}_{C_g}}=\mathscr{Z}(K,H,B)\cap \overline{\mathscr{Z}(G)_{C_g}}$. Thus,  
$$\mathscr{Z}(K,H,B)=\bigoplus_{C_g\in {\rm C}_K(\mathbf{k})}\mathscr{Z}(K,H,B)\cap \overline{\mathscr{Z}(G)_{C_g}}.$$
\end{enumerate}
\end{lemma}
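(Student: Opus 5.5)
The plan is to prove (3) first, since it identifies the simple objects of $\mathscr{Z}(K,H,B)$, then construct the projective covers in (1), and finally get (2) by a dimension count. Throughout, $\mathscr{Z}(K,H,B)$ is the full subcategory of $\mathscr{Z}(G)$ of $D(G)$-modules annihilated by $\ker\theta$. By Theorem \ref{quotientha}(2), this means two conditions:
\begin{itemize}
\item[(a)] $\mathscr{O}(G/K)^+$ acts by zero;
\item[(b)] each $v\in\mathbf{k}[H]$ acts as $\mu_K(B(v))\bowtie 1$.
\end{itemize}

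\textbf{Step 1: simples of $\mathscr{Z}(K,H,B)$ in each block.} Condition (a) says the underlying $\mathscr{O}(G)$-module is scheme-theoretically supported on $K$. So a simple object $\mathbf{F}_{C_g}(M)$ of $\mathscr{Z}(G)$ can lie in $\mathscr{Z}(K,H,B)$ only if $C_g\subset K$, i.e.\ $C_g\in{\rm C}_K(\mathbf{k})$. For such $g$ I will compare the two sides of (b) on $\mathbf{F}_{C_g}(M)=\mathscr{O}(C_g)\otimes M$.
\begin{itemize}
\item By Lemma \ref{expac}, $v$ acts by $c\otimes m\mapsto c\otimes m\cdot\iota_{H,G_g}(v)$.
\item $\mu_K(B(v))\bowtie 1$ acts by multiplication by $q_{C_g}(\mu_K(B(v)))$ on the first factor, i.e.\ by the restriction of $B(v)$ to $C_g$.
\end{itemize}
Evaluating at the closed point $g$ shows that (b) forces $m\cdot v=B_g(v)m$, which is exactly (\ref{hinv}). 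Conversely, I expect (\ref{hinv}) to imply (b) on all of $C_g\cong G/G_g$. The argument would transport along the orbit: $i_g$ intertwines the $G$-action, and $B$ is $G$-equivariant in the sense of (\ref{Binv}). Hence both sides of (b) are $G$-equivariant $\mathscr{O}(C_g)$-linear endomorphisms that agree at the fibre over $g$, so they agree.

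By Lemma \ref{Help0} and the identification after it, the simples of $\mathscr{Z}(K,H,B)$ supported on $C_g$ are then exactly the $\mathbf{F}_{C_g}(M)$ with $M$ a simple of ${\rm Corep}(\mathscr{O}(G_g/H)_{\psi_g})$. Combined with the block decomposition of Theorem \ref{maings}(2), this gives (3). For the equality in (3), I read both sides as blocks of $\mathscr{Z}(K,H,B)$ with the same simple objects.

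\textbf{Step 2: projective covers, (1).} I will mimic Theorem \ref{maings}(3), replacing $G^\circ$ by $K^\circ$. This replacement is natural because $\mathscr{O}(G)$ acts through $\mathscr{O}(K)$. Concretely, I take $P_{(G_g/H,\psi_g)}(M)$, the projective cover in the twisted category, and form $\mathscr{O}(K^\circ)\otimes\mathscr{O}(C_g(\mathbf{k}))\otimes P_{(G_g/H,\psi_g)}(M)$ with the coaction $R^g$ defined by the same formula. I then check three things:
\begin{itemize}
\item it satisfies (a) and (b), by the same computation as in Step 1;
\item it surjects onto $\mathbf{F}_{C_g}(M)$;
\item its socle and head are both $\mathbf{F}_{C_g}(M)$.
\end{itemize}

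\textbf{Step 3: dimensions, (2).} I compute $\sum_X {\rm FPdim}(X)\,{\rm FPdim}(P(X))$ over the simples $X$, using Theorem \ref{maings}(4) and the twisted analogue. For $X=\mathbf{F}_{C_g}(M)$:
\begin{itemize}
\item ${\rm FPdim}(X)=|C_g|\dim M$;
\item ${\rm FPdim}(P(X))=|K^\circ|\,|C_g(\mathbf{k})|\dim P(M)$.
\end{itemize}
Since $\mathbf{k}^{\psi_g}[G_g/H]$ is a finite dimensional algebra, $\sum_M\dim M\dim P(M)=|G_g|/|H|$. Using $|C_g||G_g|=|G|$, the contribution of one orbit is $|G||K^\circ||C_g(\mathbf{k})|/|H|$. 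Summing over ${\rm C}_K(\mathbf{k})$ uses $\sum|C_g(\mathbf{k})|=|K(\mathbf{k})|$, and $|K^\circ||K(\mathbf{k})|=|K|$. The total is therefore $|K|[G:H]$.

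\textbf{Main obstacle.} The hardest step is proving that the object in (1) is projective in the subcategory. My first approach is to bypass a direct verification with a dimension argument. For each simple $X$, the object $P_X$ built in Step 2 is a quotient of the projective cover of $X$ in $\mathscr{Z}(K,H,B)$, so $\sum_X{\rm FPdim}(X){\rm FPdim}(P_X)$ is at most ${\rm FPdim}\,\mathscr{Z}(K,H,B)$. By Corollary \ref{main-1} the latter equals $|K|[G:H]$, and Step 3 shows the left side equals the same number. Hence every inequality $P_X\twoheadleftarrow P(X)$ is an isomorphism, which gives (1) and (2) simultaneously.
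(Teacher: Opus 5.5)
\textbf{The gap is in Step~2.} It breaks the squeeze you use for (1), and in your write-up also for (2).

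The object $P_X=\mathscr{O}(K^{\circ})\ot\mathscr{O}(C_g(\mathbf{k}))\ot P_{(G_g/H,\psi_g)}(M)$, with the coaction $R^g_M$ copied from Theorem~\ref{maings}(3), sees $B$ only through data at the closed point $g$, namely through the $\mathbf{k}^{\psi_g}[G_g/H]$-module $P_{(G_g/H,\psi_g)}(M)$. Condition (b), however, requires each $v\in\mathbf{k}[H]$ to act by multiplication by the function $B(v)$ on the whole thickened support $K^{\circ}C_g(\mathbf{k})$, and that function need not be constant along $K^{\circ}$. So $P_X$ is in general not an object of $\mathscr{Z}(K,H,B)$.

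Concretely, take $G=K=H=\mathbb{G}_{a,1}$ and $B=B_\lambda$ with $\lambda\neq 0$, so $D(K,H,B)=\mathscr{O}(\mathbb{G}_{a,1})$. For $g=1$, $C_g$ is a point and $G_g/H$ is trivial. Your $P_X$ is then the regular $\mathscr{O}(\mathbb{G}_{a,1})$-module with $\mathbf{k}[G]$ acting through $\varepsilon$. But $\theta(1\bowtie\delta_1)=\lambda t$ forces $\delta_1$ to act by $\lambda t\neq 0$ on every object of $\mathscr{Z}(K,H,B)$; your $P_X$ lies in $\mathscr{Z}(K,H,1)$ instead.

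The ``same computation as in Step~1'' cannot catch this. Step~1 uses that maps between objects annihilated by $\mathscr{I}(C_g)$ are determined by their fibres at $g$, and $P_X$ is not annihilated by $\mathscr{I}(C_g)$. Hence $P_X$ is not a quotient of $P_{\mathscr{Z}(K,H,B)}(X)$, and the inequality $\sum_X{\rm FPdim}(X){\rm FPdim}(P_X)\le |K|[G:H]$ is unproved.

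Two smaller points about the candidate. Even a correct candidate would need head $X$, which you do not address. Its socle is irrelevant, and your claim that it is $X$ is false in general: for $\Rep(G_1)=\mathscr{Z}(1,1,1)\subseteq\mathscr{Z}(G_1)$ in Example~\ref{2dimrestlie}, the projective cover of the simple on which $x$ acts by $i\in\mathbb{Z}/p\mathbb{Z}$ has socle on which $x$ acts by $i-1$.

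\textbf{What works, and how it differs from the paper.} Step~1 is sound and takes a different route. The paper proves only that membership forces (\ref{hinv}), by evaluating at $g$ as you do. It then gets (3) from the inclusion $\mathscr{Z}(K,H,B)\subseteq\widetilde{\mathscr{Z}}$ plus the equality of Frobenius--Perron dimensions in (2), so its (3) rests on (1).

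Your equivariant transport proves the converse directly. State it for the two maps $X\to X\ot\mathscr{O}(H)$: the $H$-coaction, and multiplication by the bicharacter of $B$. Both are $\mathscr{O}(C_g)$-linear (the first because $H$ centralizes $K$) and $G$-equivariant for the conjugation action on $\mathscr{O}(H)$ (the second by (\ref{Binv})). A single operator $v$ is not $G$-equivariant, so this reformulation is needed.

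This gives (3) with the closures taken inside $\mathscr{Z}(K,H,B)$, as you read them. A Serre closure in $\mathscr{Z}(G)$ would not work: for $G=\mathbb{G}_{a,1}$, $K=1$, $H=G$ it is all of $\mathscr{Z}(G)$, while $\mathscr{Z}(1,G,1)={\rm Vect}$. Once (3) holds, (2) is just Corollary~\ref{main-1}.

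\textbf{A possible repair of (1): reverse the squeeze.}
\begin{enumerate}
\item $D(K,H,B)$ is free over $\mathscr{O}(K)$, so $P(X)$ is a projective $\mathscr{O}(K)$-module supported on $K^{\circ}C_g(\mathbf{k})$.
\item Its rank $r$ is constant along $C_g(\mathbf{k})$ (conjugate by $\theta(1\bowtie g')$, $g'\in G(\mathbf{k})$), so $\dim P(X)=|K^{\circ}||C_g(\mathbf{k})|\,r$.
\item Let $\mathfrak{m}_g\subset\mathscr{O}(K)$ be the maximal ideal of $g$. Then $\Hom_{G_g}(P(X)/\mathfrak{m}_gP(X),N)\cong\Hom(P(X),\mathbf{F}_{C_g}(N))$ for $N\in{\rm Corep}(\mathscr{O}(G_g/H)_{\psi_g})$.
\item By Step~1, $\mathbf{F}_{C_g}(N)$ lies in $\mathscr{Z}(K,H,B)$, so this fibre is a projective $\mathbf{k}^{\psi_g}[G_g/H]$-module mapping onto $M$. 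Hence $r\ge\dim P_{(G_g/H,\psi_g)}(M)$.
\item Compare with $\dim D(K,H,B)=\sum_X\dim X\,\dim P(X)$. Your count in Step~3 then forces equality for every $X$.
\end{enumerate}
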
 

\begin{proof}
(1) The proof is similar to the proof of \cite[Theorem 8.3(5)]{GS}.

(2) By definition, using (1), we have
\begin{eqnarray*}
\lefteqn{{\rm FPdim}(\widetilde{\mathscr{Z}})=\sum_{X\in \mathcal{O}(\widetilde{\mathscr{Z}})} {\rm FPdim}(X){\rm FPdim}(P_{\widetilde{\mathscr{Z}}}(X))}\\
& = & \sum_{C_g\in {\rm C}_K(\mathbf{k}),\,M\in \mathcal{O}(G_g/H,\psi_g)} {\rm FPdim}(\mathbf{F}_{C_g}(M)){\rm FPdim}(P_{\widetilde{\mathscr{Z}}}(\mathbf{F}_{C_g}(M)))\\
& = & \sum_{C_g\in {\rm C}_K(\mathbf{k}),\,M\in \mathcal{O}(G_g/H,\psi_g)} |C_g|{\rm dim}_{\mathbf{k}}(M)|K^{\circ}||C_g(\mathbf{k})|{\rm dim}_{\mathbf{k}}\left(P_{(G_g/H,\psi_g)}(M)\right)\\
& = & \sum_{C_g\in {\rm C}_K(\mathbf{k})} |C_g||K^{\circ}||C_g(\mathbf{k})|[G_g:H]=\sum_{C_g\in {\rm C}_K(\mathbf{k})} 
[G:H]|K^{\circ}||C_g(\mathbf{k})|\\
& = & [G:H]|K|\sum_{C_g\in {\rm C}_K(\mathbf{k})} 
\frac{|C_g(\mathbf{k})|}{|K(\mathbf{k})|}=[G:H]|K|,
\end{eqnarray*}
as claimed.

(3) If $\mathbf{F}_{C_g}(M,\rho_M)\in \mathscr{Z}(K,H,B)\cap \mathscr{Z}(G)_{C_g}$, then by Lemma \ref{expac},
$$\left\langle \iota_{H,G_g}(v),m^{(-1)}\right\rangle
c\otimes m^{(0)}=q_{K,C_g}(B(v))c\ot m$$
for every $v\in\mathbf{k}[H]$ and $c\otimes m\in \mathscr{O}(C_g)\otimes M$. Applying $g\ot\id_M$ to both sides yields 
$$\left\langle \iota_{H,G_g}(v),m^{(-1)}\right\rangle 
c(g)m^{(0)}=B_g(v)c(g)m,$$
which implies that $(M,\rho_M)\in {\rm Corep}(\mathscr{O}(G_g/H)_{\psi_g})$, so 
$$\mathscr{Z}(K,H,B)\cap \mathscr{Z}(G)_{C_g}\subseteq \widetilde{\mathscr{Z}}(K,H,B)_{C_g}.$$
Thus, $\mathscr{Z}(K,H,B)\cap \overline{\mathscr{Z}(G)_{C_g}}\subseteq \overline{\widetilde{\mathscr{Z}}(K,H,B)_{C_g}}$, so $\mathscr{Z}(K,H,B)\subseteq \widetilde{\mathscr{Z}}(K,H,B)$. Since by (2), $\widetilde{\mathscr{Z}}(K,H,B)$ and $\mathscr{Z}(K,H,B)$ have the same Frobenius-Perron dimension, the claim follows.  
\end{proof}
 
\begin{theorem}\label{main1}
Fix a braided subcategory $\mathscr{Z}:=\mathscr{Z}(K,H,B)\subseteq \mathscr{Z}(G)$, and for each $C_g\in {\rm C}_K(\mathbf{k})$, set $\mathscr{Z}_{C_g}:=\mathscr{Z} \cap \mathscr{Z}(G)_{C_g}$. Then the following hold:
\begin{enumerate}
\item
The Abelian equivalence $\mathbf{F}_{C_g}$ (\ref{funfc}) restricts to an equivalence 
$$\mathbf{F}_{C_g}:{\rm Corep}(\mathscr{O}(G_g/H)_{\psi_g})\xrightarrow{\simeq}\mathscr{Z}_{C_g},\quad  
(M,\rho_M)\mapsto \left(\mathscr{O}(C_g)\otimes M,\rho_M^g\right).$$
In particular, the composition functor 
$$\Rep(G/H)\simeq {\rm Corep}(\mathscr{O}(G/H))\xrightarrow{\mathbf{F}_{1}}\mathscr{Z}_1\hookrightarrow \mathscr{Z}$$
coincides with the canonical embedding $\Rep(G/H)\hookrightarrow \mathscr{Z}$ of braided categories.
\item
For each $C_g\in {\rm C}_K(\mathbf{k})$, with representative $g\in C_g(\mathbf{k})$, there is a bijection between the set of equivalence classes of simple objects $M\in {\rm Corep}(\mathscr{O}(G_g/H)_{\psi_g})$ and isomorphism classes of simple objects of $\mathscr{Z}_{C_g}$, assigning $M$ to $\mathbf{F}_{C_g}(M)$. Moreover, we have a direct sum decomposition of Abelian categories
$$\mathscr{Z}(K,H,B)=\bigoplus_{C_g\in {\rm C}_K(\mathbf{k})}\overline{\mathscr{Z}_{C_g}},$$
and
$\overline{\Rep(G/H)}\simeq \overline{\mathscr{Z}_{1}}=\mathscr{Z}\left(K^{\circ},H,\iota^{\sharp}_{K^{\circ},K}\circ B\right)$. In particular, if $G$ is connected then the simples of $\mathscr{Z}$ are precisely those of $\Rep(G/H)\simeq {\rm Corep}(\mathscr{O}(G/H))$.
\item
For each $(M,\rho_M)\in {\rm Corep}(\mathscr{O}(G_g/H)_{\psi_g})$, we have $\mathbf{F}_{C_g}(M,\rho_M)^*\cong \mathbf{F}_{C_{g^{-1}}}(M^*,\rho_{M^*})$.
\item
For each $(M,\rho_M)\in {\rm Corep}(\mathscr{O}(G_g/H)_{\psi_g})$, we have
$$
{\rm FPdim}\left(\mathbf{F}_{C_g}(M,\rho_M)\right)=|C_g|{\rm dim}_{\mathbf{k}}(M).$$
\item
For each simple $(M,\rho_M)\in {\rm Corep}(\mathscr{O}(G_g/H)_{\psi_g})$, we have
$$P_{\mathscr{Z}}\left(\mathbf{F}_{C_g}(M,\rho_M)\right)\cong 
\left(\mathscr{O}(K^{\circ})\ot\mathscr{O}(C_g(\mathbf{k}))\ot P_{(G_g/H,\psi_g)}(M,\rho_M),R_M^g\right),$$
where $\mathscr{O}(K)$ acts on the first two factors diagonally, and  
$$R_{M}^g:=\left(\chi_{C_g}\ot\id^{\ot 2}\right)\left(\id_{\mathscr{O}(K^{\circ})}\ot\rho_{P_{(G_g/H,\psi_g)}(M,\rho_M)}^g\right)\left(\nu_{C_g}\ot\id\right).$$ 
In particular, we have 
$${\rm FPdim}\left(P_{\mathscr{Z}}\left(\mathbf{F}_{C_g}(M,\rho_M)\right)\right)=|K^{\circ}||C_g(\mathbf{k})|{\rm dim}_{\mathbf{k}}\left(P_{(G_g/H,\psi_g)}(M,\rho_M)\right).$$
\item
For each $C_g\in {\rm C}_K(\mathbf{k})$, we have 
$
{\rm FPdim}\left(\overline{\mathscr{Z}_{C_g}}\right)=|K^{\circ}||C_g(\mathbf{k})|[G:H]$. 
\end{enumerate}
\end{theorem}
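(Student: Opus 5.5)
The plan is to deduce everything from Theorem \ref{maings} (the description of $\mathscr{Z}(G)$ from \cite{GS}) together with Lemmas \ref{expac}, \ref{Help0} and \ref{Help}.

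\textbf{Step 1: parts (1) and (2).} First I would show that $\mathscr{Z}_{C_g}=\widetilde{\mathscr{Z}}(K,H,B)_{C_g}$. The inclusion $\subseteq$ was established in the proof of Lemma \ref{Help}(3), using Lemma \ref{expac} evaluated at $g$. For the reverse inclusion, Lemma \ref{Help}(3) gives $\overline{\widetilde{\mathscr{Z}}_{C_g}}=\mathscr{Z}\cap\overline{\mathscr{Z}(G)_{C_g}}$. Intersecting both sides with $\mathscr{Z}(G)_{C_g}$ yields the equality, because $\widetilde{\mathscr{Z}}_{C_g}\subseteq\mathscr{Z}(G)_{C_g}$ by construction.

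Since $\mathbf{F}_{C_g}$ is an equivalence onto $\mathscr{Z}(G)_{C_g}$, it restricts to an equivalence from ${\rm Corep}(\mathscr{O}(G_g/H)_{\psi_g})$ onto $\mathscr{Z}_{C_g}$. Simples correspond because ${\rm Corep}(\mathscr{O}(G_g/H)_{\psi_g})$ is closed under subquotients inside ${\rm Corep}(\mathscr{O}(G_g))$; this is the condition (\ref{hinv}). The bijection on simples then follows from Theorem \ref{maings}(2). The direct sum decomposition is Lemma \ref{Help}(3).

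For $g=1$ we have $G_1=G$, $B_1=\varepsilon$ and $\psi_1$ trivial. Hence $\mathbf{F}_1$ restricted to ${\rm Corep}(\mathscr{O}(G/H))$ is the restriction of the canonical embedding $\Rep(G)\hookrightarrow\mathscr{Z}(G)$ to $\Rep(G/H)$, and this is the canonical embedding into $\mathscr{Z}$. To get $\overline{\mathscr{Z}_1}=\mathscr{Z}(K^\circ,H,\iota^\sharp B)$, I would do three things:
\begin{itemize}
\item observe that the right-hand side is contained in $\mathscr{Z}$ by Corollary \ref{main-2}(1);
\item show that its objects are supported on $K^\circ$, so its simples lie in $\mathscr{Z}_1$ by Theorem \ref{maings}(2), which gives containment in $\overline{\mathscr{Z}_1}$;
\item compare Frobenius--Perron dimensions: the left side has $|K^\circ|[G:H]$ by part (6) with $g=1$, and the right side has $|K^\circ|[G:H]$ by Corollary \ref{main-1}.
\end{itemize}
The connected case follows because then $C_K(\mathbf{k})=\{1\}$.

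\textbf{Step 2: parts (3), (4) and (6).} Part (4) is Theorem \ref{maings}(4). For (3), duality in $\mathscr{Z}(G)$ sends $\mathscr{Z}(G)_{C_g}$ to $\mathscr{Z}(G)_{C_{g^{-1}}}$, since the antipode of $\mathscr{O}(G)$ maps $\mathscr{I}(C_g)$ to $\mathscr{I}(C_{g^{-1}})$. The dual of a simple is simple, and $\mathscr{Z}$ is closed under duals. I would therefore identify the corresponding $G_{g^{-1}}=G_g$-module as $M^*$ by computing the action of $\mathbf{k}[G_g]$ at the point $g^{-1}$ using (\ref{actGonCg}). One also checks that $M^*$ satisfies (\ref{hinv}) for $g^{-1}$, using $B_{g^{-1}}=B_g\circ S$. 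Part (6) is obtained by summing ${\rm FPdim}(X){\rm FPdim}(P(X))$ over the block, exactly as in the computation in Lemma \ref{Help}(2) but without summing over $C_g$.

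\textbf{Step 3: part (5) and the main obstacle.} Part (5) is Lemma \ref{Help}(1) transported along $\mathscr{Z}=\widetilde{\mathscr{Z}}$, with the explicit comodule structure $R^g_M$ inherited from Theorem \ref{maings}(3). I expect the main work to be verifying that this module, built with $\mathscr{O}(K^\circ)$ in place of $\mathscr{O}(G^\circ)$, actually lies in $\mathscr{Z}$ and is projective there. The approach I would take is to realize it as the largest quotient of $P_{\mathscr{Z}(G)}(\mathbf{F}_{C_g}(M))$ lying in $\mathscr{Z}$, that is, the quotient by the ideal of $\theta$. This is the standard fact that $\theta$-coinvariant quotients of projectives are projective over the quotient Hopf algebra, since $D(G)$ is free over a quotient by a normal Hopf ideal. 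Its dimension is then computed from the two identifications $\mathscr{O}(G)\to\mathscr{O}(K)$ and $P_{G_g}(M)\to P_{(G_g/H,\psi_g)}(M)$. The FPdim formula then follows immediately.
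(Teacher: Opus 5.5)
For parts (1), (2), (4), (5) and (6) you follow the paper's route: everything is read off from Lemma~\ref{Help} and Theorem~\ref{maings}. Two of your additions are correct and more explicit than the paper:
\begin{itemize}
\item You extract $\mathscr{Z}_{C_g}=\widetilde{\mathscr{Z}}(K,H,B)_{C_g}$ from Lemma~\ref{Help}(3), using the evaluation-at-$g$ argument for one inclusion and intersection with $\mathscr{Z}(G)_{C_g}$ for the other.
\item You prove $\overline{\mathscr{Z}_1}=\mathscr{Z}(K^\circ,H,\iota^\sharp_{K^\circ,K}\circ B)$ by a support-plus-FPdim argument. The FPdim comparison works because both sides have the same simples, and projective covers in the smaller category are quotients of those in the larger.
\end{itemize}

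The genuine problem is part (3). There the paper only cites [GS], and you propose to identify the module at $g^{-1}$ as $M^*$ by direct computation. That identification fails in general.

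Put $X=\mathbf{F}_{C_g}(M)$, and let $\mathfrak{m}_g\subset\mathscr{O}(C_g)$ be the maximal ideal at $g$. Since $\mathscr{O}(G)$ acts on $X^*$ through $S$, the fibre you read off at $g^{-1}$ is
$X^*/\mathfrak{m}_{g^{-1}}X^*\cong(\{x\in X\mid \mathfrak{m}_g x=0\})^*$.
This is the dual of the socle of $X$ at $g$, not the dual of the fibre $X/\mathfrak{m}_gX\cong M$. When $C_g$ is non-reduced, that socle is $\operatorname{soc}(\mathscr{O}(C_g)_g)\otimes M$, and $G_g$ acts on it through an extra character $\chi_g$. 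What the computation actually produces is $\mathbf{F}_{C_{g^{-1}}}(M^*\otimes\chi_g^{-1})$, and $\chi_g$ can be nontrivial.

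Here is a concrete example with $(K,H,B)=(G,1,1)$:
\begin{itemize}
\item Take $p=2$ and $G=\mathbb{G}_{a,1}\rtimes\mathbb{Z}/3$, where a generator $\sigma$ acts by a primitive cube root of unity $\zeta$. Take $g=\sigma$.
\item Then $G_\sigma=\mathbb{Z}/3$, $C_\sigma=\mathbb{G}_{a,1}\times\{\sigma\}$ and $\mathscr{O}(C_\sigma)=\mathbf{k}[t]/(t^2)$, with $\sigma$ rescaling $t$ by $\zeta^{\pm1}$.
\item By the formula for $\rho^g_M$, both $\mathbf{F}_{C_\sigma}(\mathbf{k})$ and $\mathbf{F}_{C_{\sigma^{-1}}}(\mathbf{k})$ are generated over $\mathscr{O}(G)$ by the $G$-invariant vector $1\otimes 1$. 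In particular $\mathbf{F}_{C_\sigma}(\mathbf{k})$ is $\mathscr{O}(C_\sigma)$ with the conjugation action.
\item Any $\sigma$-invariant $\phi\in\mathbf{F}_{C_\sigma}(\mathbf{k})^*$ must kill $t$. Hence $\mathscr{O}(G)\phi=\mathbf{k}\phi$, so the $2$-dimensional dual is not generated by an invariant vector.
\item Therefore $\mathbf{F}_{C_\sigma}(\mathbf{k})^*\not\cong\mathbf{F}_{C_{\sigma^{-1}}}(\mathbf{k}^*)$.
\end{itemize}
So this step cannot be completed as stated. Your argument, and (3) itself, are correct when $C_g$ is reduced, for example when $G$ is constant or connected, or when $g$ centralizes $G^\circ$. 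In general, (3) needs the twist by $\chi_g^{-1}$.

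On part (5): realizing the projective cover as $P/(\ker\theta)P$, with $P=P_{\mathscr{Z}(G)}(\mathbf{F}_{C_g}(M))$, is a sound way to get projectivity and the cover property. The reason you give is not the right one, though. $P/(\ker\theta)P$ is just the base change $D(K,H,B)\otimes_{D(G)}P$, which is projective, and $(\ker\theta)P\subseteq\operatorname{rad}P$ gives the cover property; no freeness is involved.

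Identifying this quotient with the displayed module $(\mathscr{O}(K^\circ)\otimes\mathscr{O}(C_g(\mathbf{k}))\otimes P_{(G_g/H,\psi_g)}(M),R^g_M)$ is also not a formal consequence of the ``two identifications''. Each $v\in\mathbf{k}[H]$ must act by the function $B(v)$, which is generally non-constant on the thickened support $K^\circ C_g(\mathbf{k})$. That identification is exactly the content of Lemma~\ref{Help}(1), which you would have to cite, as the paper does, or actually prove.
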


\begin{proof}
(1)-(2) These follow from Lemma \ref{Help}.

(3) This follows from \cite[Theorem 8.3]{GS} (see Theorem \ref{maings}).

(4) This follows from (1).

(5)-(6) These follow from \cite[Theorem 8.3]{GS} and the previous parts.
\end{proof}

\section{Some special cases and examples}\label{sec:examples}

\subsection{Canonical examples}\label{sec:canexs}
The following are some canonical Hopf quotients of $D(G)$ and their corresponding braided subcategories of $\mathscr{Z}(G)$:

(1) We have 
$$D(1,G,1)=\mathbf{k},\quad D(G,1,1)=D(G),\quad D(1,1,1)=\mathbf{k}[G].$$
Thus, we have
$$\mathscr{Z}(1,G,1)={\rm Vect},\quad \mathscr{Z}(1,G,1)'=\mathscr{Z}(G,1,1)=\mathscr{Z}(G),\quad \mathscr{Z}(1,1,1)=\Rep(G).$$

(2) For every normal subgroup scheme $K\subseteq G$, we have $D(1,K,1)=\mathbf{k}[G/K]$, and $D(K,1,1)=\mathscr{O}(K)^{{\rm cop}}\# \mathbf{k}[G]$ is a tensor product coalgebra and smash product algebra. In particular, $D(1,G^{\circ},1)=\mathbf{k}[G(\mathbf{k})]$, and $D(G^{\circ},1,1)=\mathscr{O}(G^{\circ})^{{\rm cop}}\# \mathbf{k}[G]$.

(3) For every normal subgroup scheme $K\subseteq G$, a {\em central} subgroup scheme $H\subseteq G$, and a $G$-equivariant group scheme morphism $B:K\to H^{\vee}$, the Hopf algebra
$$D(K,H,B)=\mathscr{O}(K)\#_{\sigma}^{\tau} \mathbf{k}[G/H]$$
is a tensor product coalgebra and smash product algebra, and by Proposition \ref{genproprs}(7), 
$$D(H,K,\overline{B})=\mathbf{k}[H^{\vee}]\#_{\sigma}^{\tau} 
\mathbf{k}[G/K]=\mathbf{k}[\widetilde{G}],$$ 
where $\widetilde{G}$ is the group scheme extension of $H^{\vee}$ by $G/K$ corresponding to  
$$\sigma\in Z^2(H^{\vee},G/K),\quad \tau\in Z^1(H^{\vee},G/K),$$
with trivial action. Thus, we have 
$$\mathscr{Z}(K,H,B)=\Rep\left(\mathscr{O}(K)\#_{\sigma}^{\tau} 
\mathbf{k}[G/H]\right),$$
and $\mathscr{Z}(K,H,B)'=\mathscr{Z}(H,K,\overline{B})=\Rep(\widetilde{G})$.

\subsection{Direct products}\label{sec:canexs5}
Let $G:=K\times H$, and let $B:\mathbf{k}[H]\xrightarrow{}\mathscr{O}(K)$ be any $G$-equivariant Hopf algebra map. Then
$$\mathscr{Z}(K\times 1,1\times H,B)\subseteq \mathscr{Z}(G)$$
is a non-degenerate braided subcategory. Note that since $\sigma$ and $\tau$ are trivial by Remark \ref{trivial}, it follows that  
$$\mathscr{Z}(K\times 1,1\times H,B)\cong \mathscr{Z}(K)$$
as braided categories (regardless of the choice of $B$).

\subsection{Constant groups}\label{sec:exconst1}
Assume that $G$ is constant; that is, $G=G(\mathbf{k})$. For $g\in G$, let $\delta_g\in \mathscr{O}(G)$ be the delta function at $g$. The elements $\{\delta_g\}_{g\in G}$ are pairwise orthogonal idempotents, hence any $\mathscr{O}(G)$-module $V$ decomposes as $$V = \bigoplus_{g\in G} V_g, \qquad V_g:=\delta_gV.$$ Recall that in this case a $D(G)$-module is a $G$-graded vector space $V=\bigoplus_{g\in G}V_g$, together with an action of $G$, such that $x\cdot V_g \subseteq V_{xgx^{-1}}$ for every $x,g\in G$.
	
Now fix a triple $(K,H,B)$ and let $\theta:D(G)\twoheadrightarrow D(K,H,B)$ be the quotient map as in Theorem~\ref{quotientha}. Since in this case $\gamma_H$ is a coalgebra map, we have that
$$D(K,H,B)=\mathscr{O}(K)^{{\rm cop}}\#_{\sigma}\mathbf{k}[G/H]$$
is a tensor product coalgebra by Proposition \ref{genproprs}(4).

If $V\in\Rep(D(K,H,B))=\mathscr{Z}(K,H,B)$, and we inflate $V$ along $\theta$ to a $D(G)$-module, then the $\mathscr{O}(G)$-action factors through $q_K:\mathscr{O}(G)\twoheadrightarrow\mathscr{O}(K)$. In particular, $q_K(\delta_g)=0$ for $g\notin K$, so $V_g=\delta_gV=0$ unless $g\in K$. Thus, $V$ is supported on $K$. Moreover, for $h\in H$, we have $\pi_H(h)=1$ and $\eta_H(h)=h$, hence 
$$\theta(1\bowtie h)=B(h)\# 1\in \mathscr{O}(K)\#_{\sigma}\mathbf{k}[G/H],$$ 
so on each homogeneous component $V_k$, $k\in K$, the element $h$ acts by the scalar $B(h)(k)$. Equivalently, setting 
$$\beta:H\times K\to \mathbf{k}^\times,\qquad \beta(h,k):=B(h)(k),$$ 
we see that $\mathscr{Z}(K,H,B)$ consists precisely of $G$-equivariant sheaves on $G$ supported on $K$ for which the action of $H$ on the fibre over $k\in K$ is given by $\beta(h,k)$, so in the fusion case, $\mathscr{Z}(K,H,B)$ is the category $\cS(K,H,B)$ from \cite[Section 3]{NNW}.

It follows from Theorem \ref {main1}, similarly to \cite[Corollary 6.4]{GS2}, that we have a direct sum decomposition of Abelian categories
$$\mathscr{Z}(K,H,B)=\bigoplus_{C_g\in {\rm C}_K}\mathscr{Z}(K,H,B)_{C_g}\simeq \bigoplus_{C_g\in {\rm C}_K}{\rm Corep}(\mathscr{O}(G_g/H)_{\psi_g}),$$
given by $\mathbf{F}_{C_g}$, $C_g\in {\rm C}_K$,
and for every simple $M\in {\rm Corep}(\mathscr{O}(G_g/H)_{\psi_g})$, we have
$$P_{\mathscr{Z}(K,H,B)}\left(\mathbf{F}_{C_g}(M)\right)= \mathbf{F}_{C_g}(P_{(G_g/H,\psi_g)}(M))=
\left(\mathscr{O}(C_g)\ot P_{(G_g/H,\psi_g)}(M),\rho_{P_{(G_g/H,\psi_g)}(M)}^g\right),$$
where $\mathscr{O}(K)$ acts on the first factor.

\subsection{Connected groups}\label{sec:restlie case double}
Assume that $G$ is {\em connected}; that is, $G(\mathbf{k})=1$ (e.g., $G$ is the finite group scheme associated to 
a finite dimensional restricted $p$-Lie algebra $\mathfrak{g}$). Then it follows from Theorem \ref {main1}, similarly to \cite[Example 7.5]{GS2}, that for each triple $\left(K,H,B\right)$ as in Theorem \ref{quotientha}, we have $\mathscr{Z}(K,H,B)=\overline{{\rm Rep}\left(G/H\right)}$, and 
for every simple $M\in \Rep\left(G/H\right)$, we have
$$P_{\mathscr{Z}(K,H,B)}\left(\mathbf{F}(M)\right)\cong 
\left(\mathscr{O}(K)\ot P_{G/H}(M),\id\ot\rho_{P_{G/H}(M)}\right),$$
where $\mathscr{O}(K)$ acts on the first factor.

\begin{example}\label{2dimrestlie}
Let $\mathfrak{g}$ be the unique $2$-dimensional non-Abelian restricted $p$-Lie algebra with basis $\{x,y\}$, such that $[x,y]=y$, $x^{[p]}=x$, and $y^{[p]}=0$. Then $\mathfrak{g}={\rm Lie}(G_1)$, where $G_1$ is the Frobenius kernel of the group scheme $G:=\mathbb{G}_a\rtimes \mathbb{G}_m$ of automorphisms of the affine line $\mathbb{A}^1$. It is clear that $\mathfrak{a}:={\rm sp}_{\mathbf{k}}\{y\}={\rm Lie}(\mathbb{G}_{a,1})\subseteq \mathfrak{g}$ is an ideal, $\mathfrak{g}/\mathfrak{a}={\rm Lie}(\mathbb{G}_{m,1})$, and $G_1=\mathbb{G}_{a,1}\rtimes \mathbb{G}_{m,1}$.

We have the (degenerate non-symmetric) braided subcategory 
$$\mathscr{Z}\left(\mathbb{G}_{a,1},1,1\right)=\Rep(\mathbb{G}_{a,1}^{\vee}\times G_1)\subseteq \mathscr{Z}(G_1)$$
of Frobenius-Perron dimension $p^3$, 
and the infinite family of non-equivalent braided subcategories
$$\mathscr{Z}_{\lambda}:=\mathscr{Z}\left(\mathbb{G}_{a,1},\mathbb{G}_{a,1},B_{\lambda}\right)=\Rep(\mathbb{G}_{a,1}^{\vee}\times \mathbb{G}_{m,1},R_{\lambda})\subseteq \mathscr{Z}(G_1)$$ 
of Frobenius-Perron dimension $p^2$ parameterized by $\lambda \in \mathbf{k}$, where $B_{\lambda}$ and $R_{\lambda}$ are given in \S\ref{sec:appendix1}. Note that $\mathscr{Z}_{\lambda}$ is Lagrangian if and only if $\lambda=0$, and that $\mathscr{Z}_0$ and $\mathscr{Z}\left(1,1,1\right)=\Rep(G_1,1\ot 1)$ are not equivalent as braided subcategories of $\mathscr{Z}(G_1)$, as predicted by Theorem \ref{mainclass}. \qed 
\end{example}

\begin{example}\label{3dimrestHlie}
Let $\mathfrak{g}$ be the $3$-dimensional Heisenberg $p$-Lie algebra with basis $\{x,y,z\}$, such that $[x,y]=z$. Let 
$\mathfrak{h}:={\rm sp}_{\mathbf{k}}\{z\}\subset \mathfrak{g}$, and let $\mathfrak{k}:={\rm sp}_{\mathbf{k}}\{x,z\}\subset \mathfrak{g}$; the former is a central ideal, and the latter is an ideal. Recall that for $p>2$, $\mathfrak{g}$ admits exactly three non-isomorphic $[p]$-structures, given by 
$$x^{[p]}=y^{[p]}=z^{[p]}=0;\quad x^{[p]}=z,\,y^{[p]}=z^{[p]}=0;\quad
x^{[p]}=y^{[p]}=0,\,z^{[p]}=z.$$

In each case, let $G$ be the finite group scheme such that $\mathscr{O}(G)=(u^{[p]}(\mathfrak{g}))^*$; it is a local commutative Hopf algebra of dimension $p^3$ (so, $u^{[p]}(\mathfrak{g})$ is connected). Also, let $H\subset G$ be the central subgroup scheme such that $\mathfrak{h}={\rm Lie}(H)$, and $K\subset G$ the normal subgroup scheme such that $\mathfrak{k}={\rm Lie}(K)$.

{\bf The case $x^{[p]}=y^{[p]}=z^{[p]}=0$:}   
In this case, $\mathfrak{g}$ is $[p]$-unipotent, so that $u^{[p]}(\mathfrak{g})$ is local (and connected). Thus, $D(G)=(u^{[p]}(\mathfrak{g}))^{*{\rm cop}}\bowtie u^{[p]}(\mathfrak{g})$ is local and connected. Also, $H\cong \mathbb{G}_{a,1}$, $G/H\cong \mathbb{G}_{a,1}^2$, $K\cong \mathbb{G}_{a,1}^2$, and $G/K\cong \mathbb{G}_{a,1}$.

(1) We have an infinite family of non-equivalent {\em unipotent} braided subcategories 
$$\mathscr{Z}\left(H,H,B_{\lambda}\right)=\Rep(\mathbf{k}[\mathbb{G}_{a,1}^{\vee}]\#_{\sigma}^{\tau}\mathbf{k}[\mathbb{G}_{a,1}^2],R_{\lambda})=\Rep(\widetilde{G},R_{\lambda})\subseteq \mathscr{Z}(G)$$
of Frobenius-Perron dimension $p^3$, parameterized by $\lambda \in \mathbf{k}$, where $\widetilde{G}$ is the finite group scheme extension of $\mathbb{G}_{a,1}^2$ by $\mathbb{G}_{a,1}^{\vee}$ corresponding to $\tau\in Z^1(\mathbb{G}_{a,1}^2,(\mathbb{G}_{a,1}^{\vee})^2)$ and $\sigma\in Z^2(\mathbb{G}_{a,1}^2,\mathbb{G}_{a,1}^{\vee})$, and $B_{\lambda}$ and $R_{\lambda}$ are given in \S\ref{sec:appendix1}.

(2) We have an infinite family of non-equivalent {\em unipotent} braided subcategories 
$$\mathscr{Z}(K,H,B)=\Rep(\mathbf{k}[(\mathbb{G}_{a,1}^{\vee})^2]\#_{\sigma}^{\tau}\mathbf{k}[\mathbb{G}_{a,1}^2],R_B)=\Rep(\widetilde{G},R_B)\subseteq \mathscr{Z}(G)$$
of Frobenius-Perron dimension $p^4$, parameterized by $B\in \Hom(\mathbb{G}_{a,1},(\mathbb{G}_{a,1}^{\vee})^2)$, where $\widetilde{G}$ is the group scheme extension of $\mathbb{G}_{a,1}^2$ by $(\mathbb{G}_{a,1}^{\vee})^2$ corresponding to $\sigma\in Z^2(\mathbb{G}_{a,1}^2,(\mathbb{G}_{a,1}^{\vee})^2)$ and $\tau\in Z^1(\mathbb{G}_{a,1}^2,(\mathbb{G}_{a,1}^{\vee})^4)$.

{\bf The case $x^{[p]}=z,\,y^{[p]}=z^{[p]}=0$:} 
In this case $H\cong \mathbb{G}_{a,1}$ and $G/H\cong \mathbb{G}_{m,1}\times \mathbb{G}_{a,1}$, so we have the infinite family of non-equivalent braided subcategories 
$$\mathscr{Z}\left(H,H,B_{\lambda}\right)=\Rep(\mathbf{k}[\mathbb{G}_{a,1}^{\vee}]\#_{\sigma}^{\tau}\mathbf{k}[\mathbb{G}_{m,1}\times \mathbb{G}_{a,1}],R_{\lambda})=\Rep(\widetilde{G},R_{\lambda})\subseteq \mathscr{Z}(G)$$
of Frobenius-Perron dimension $p^3$, parameterized by $\lambda \in \mathbf{k}$, where $\widetilde{G}$ is the group scheme extension of $\mathbb{G}_{m,1}\times \mathbb{G}_{a,1}$ by $\mathbb{G}_{a,1}^{\vee}$ corresponding to $\sigma\in Z^2(\mathbb{G}_{m,1}\times \mathbb{G}_{a,1},\mathbb{G}_{a,1}^{\vee})$ and $\tau\in Z^1(\mathbb{G}_{m,1}\times \mathbb{G}_{a,1},(\mathbb{G}_{a,1}^{\vee})^2)$, and $B_{\lambda}$ and $R_{\lambda}$ are given in \S\ref{sec:appendix1}.

{\bf The case $x^{[p]}=y^{[p]}=0,\,z^{[p]}=z$:}  
In this case $H\cong \mathbb{G}_{m,1}$ and $G/H\cong \mathbb{G}_{a,1}^2$. Since the only group scheme morphism $\mathbb{G}_{a,1}\to \mathbb{Z}/p\mathbb{Z}$ is the trivial one, we only have the symmetric subcategory 
$$\mathscr{Z}\left(H,H,1\right)=\Rep(\mathbb{Z}/p\mathbb{Z}\times \mathbb{G}_{a,1}^2,1\ot 1)\subseteq \mathscr{Z}(G)$$
of Frobenius-Perron dimension $p^3$. \qed 
\end{example}

\subsection{Trivial $B$}\label{sec:Beq1}
Set $\mathscr{Z}(K,H):=\mathscr{Z}(K,H,1)$. Since by definition, an object $X$ in ${\rm Coh}(K)^{G/H}$ is an $\mathscr{O}(K)$-module in the category $\Rep(G/H)$, it follows from Proposition \ref{genproprs}(5) that we have a natural embedding of tensor categories 
$${\rm Coh}(K)^{G/H}\xrightarrow{1:1} \mathscr{Z}(K,H).$$
Since both categories have the same Frobenius-Perron dimension, it follows that we have an equivalence of tensor categories
$$\mathscr{Z}(K,H)\simeq {\rm Coh}(K)^{G/H}.$$

Now consider $K$ as a subgroup scheme of $K\times G/H$ via the group scheme embedding 
$$\Delta_{K,H}:K\to K\times G/H,\quad k\mapsto (k,\pi_H(k)).$$
Then, similarly to the proof that $\mathscr{Z}(G)\simeq {\rm Coh}(G)^{G}\simeq \mathscr{C}(G\times G,\Delta_{G,1}(G))$ \cite[Section 8]{GS}, we have an equivalence of tensor categories 
\begin{equation*}
{\rm Coh}(K)^{G/H}\simeq \mathscr{C}(K\times G/H,\Delta_{K,H}(K)),
\end{equation*}
so $\mathscr{Z}(K,H)$ is group scheme-theoretical in the sense of \cite{Ge}. In particular, by \cite{N}, it follows that 
$\mathscr{Z}(K,H)$ has the finite generation property, and \cite{Ge} implies the classification of exact indecomposable module categories over $\mathscr{Z}(K,H)$.

\subsection{Commutative groups}\label{sec:canexs0}
Assume that $A\subseteq G$ is a normal {\em commutative} subgroup scheme (e.g. $A$ is contained in the center of $G$). Then $D(A,A,1)=\mathbf{k}[A^{\vee}\times G/A]$ is a triangular Hopf algebra, where $A^{\vee}$ is the Cartier dual of $A$, and 
$$\mathscr{Z}(A,A,1)=\Rep\left(A^{\vee}\times G/A\right)\subseteq \mathscr{Z}(G)$$
is a Lagrangian subcategory.

More generally, if $B:A\to A^{\vee}$ is any $G$-equivariant group scheme morphism, then $(D(A,A,B),R_B)=(\mathbf{k}[A^{\vee}]\#_{\sigma}^{\tau} \mathbf{k}[G/A],R_B)=(\mathbf{k}[\widetilde{G}],R_B)$ is a quasitriangular Hopf algebra (see Proposition \ref{genproprs}(7)), and by Corollary \ref{main4}(3), the braided subcategory 
$$\mathscr{Z}(A,A,B)=\Rep\left(\mathbf{k}[A^{\vee}]\#_{\sigma}^{\tau} \mathbf{k}[G/A],R_B\right)=\Rep(\widetilde{G},R_B)\subseteq \mathscr{Z}(G)$$
is Lagrangian if and only if $B=\overline{B}:=B^{\vee}S$.

Assume further that $G$ is {\em commutative}. Then for any group scheme morphism $B:A\xrightarrow{}A^{\vee}$, we have the braided subcategory 
$$\mathscr{Z}(A,A,B)=\Rep\left(\mathbf{k}[A^{\vee}]\#_{\sigma}^{\tau} \mathbf{k}[G/A],R_B\right)=\Rep(\widetilde{G},R_B)\subseteq \mathscr{Z}(G);$$
by Corollary \ref{main4}(3), it is Lagrangian if and only if $B=\overline{B}$.

Assume even further that $G$ is {\em self dual}. Then for each group scheme morphism $B:G\xrightarrow{}G^{\vee}$, by Corollary \ref{main4}(2), the braided subcategory 
$$\mathscr{Z}(G,G,B)=\Rep\left(\mathbf{k}[G^{\vee}],R_B\right)\subseteq \mathscr{Z}(G)$$
is non-degenerate if and only if $B\star B^{\vee}:G\xrightarrow{\cong }G^{\vee}$ is a group scheme isomorphism. 

For example, let $G:=\mathbb{G}_{a,1}$. In \S\ref{sec:appendix1} we show that 
$$\mathscr{Z}(\mathbb{G}_{a,1},\mathbb{G}_{a,1},B_{\lambda})=\Rep\left(\mathbf{k}[\mathbb{G}_{a,1}^{\vee}],R_{\lambda}\right)\subseteq \mathscr{Z}(\mathbb{G}_{a,1})$$
is an infinite family of non-equivalent non-degenerate braided subcategories of Frobenius-Perron dimension $p$, parametrized by $\lambda\in \mathbf{k}^{\times}$, where $R_{\lambda}$ is given in (\ref{rz1}).

\begin{example}\label{newexs}
As mentioned in Remark \ref{trivialtaubar}, if $\eta$ or $\gamma$ is a coalgebra map, then $\tau$ is trivial. In \S\ref{sec:appendix2} we give an example of a nontrivial $\tau$ for $G= \mathbb{G}_{a,2}$ and $H = K = \mathbb{G}_{a,1}$ (\ref{tauappendix}), and deduce that for each $\lambda\in \mathbf{k}^{\times}$, the quasitriangular Hopf algebra 
$$\left(D(\mathbb{G}_{a,1},\mathbb{G}_{a,1},B_{\lambda}),R_{\lambda}\right)= \left(\mathbf{k}[\mathbb{G}_{a,1}^{\vee}]\#^{\tau} \mathbf{k}[\mathbb{G}_{a,2}/\mathbb{G}_{a,1}],R_{\lambda}\right)=\left(\mathbf{k}[\widetilde{\mathbb{G}_{a,1}}],R_{\lambda}\right)$$
is neither a tensor product coalgebra nor a tensor product algebra, where $R_{\lambda}$ is given in (\ref{rz2}). Thus,   
$$\mathscr{Z}(\mathbb{G}_{a,1},\mathbb{G}_{a,1},B_{\lambda})=\Rep\left(\mathbf{k}[\mathbb{G}_{a,1}^{\vee}]\#^{\tau} \mathbf{k}[\mathbb{G}_{a,2}/\mathbb{G}_{a,1}],R_{\lambda}\right)=\Rep\left(\widetilde{\mathbb{G}_{a,1}},R_{\lambda}\right)\subseteq \mathscr{Z}(\mathbb{G}_{a,2})$$
is an infinite family of non-equivalent braided subcategories of Frobenius-Perron dimension $p^2$, parametrized by $\lambda\in \mathbf{k}$. 

Note that since $\sigma$ is trivial, we have $\widetilde{\mathbb{G}_{a,1}}=\mathbb{G}_{a,1}^2$. \qed
\end{example}

\section{Appendix}\label{sec:appendix}

\subsection{The group scheme $\mathbb{G}_{a,r}$} 
Let $\mathbb{G}_{a,r}$ denote the $r$-th Frobenius kernel of $\mathbb{G}_a$. Recall that $\mathscr{O}(\mathbb{G}_{a,r}) = \mathbf{k}[t]/(t^{p^r})$ with $t$ primitive. Fix the basis $\{t^i\}_{0\le i\le p^r-1}$ for $\mathscr{O}(\mathbb{G}_{a,r})$ and denote by $\{\delta_i\}_{0\le i\le p^r-1}$ the dual basis. The multiplication in $\mathbf{k}[\mathbb{G}_{a,r}]$ is determined by 
$$\delta_m\delta_n=\begin{cases}
\binom{m+n}{m}\delta_{m+n},& m+n<p^r\\
0,& \text{otherwise}.
\end{cases}$$
The unit is $\delta_0$, and one has $$\Delta(\delta_n)=\sum_{a+b=n}\delta_a\otimes\delta_b,\quad \varepsilon(\delta_n)=\delta_{n,0},\quad S(\delta_n)=(-1)^n\delta_n.$$ 
It is well-known that the map
$$F_r: \mathbf{k}[X_0,\ldots,X_{r-1}]/(X_0^p,\ldots,X_{r-1}^p) \to \mathbf{k}[\mathbb{G}_{a,r}],\quad X_i\mapsto \delta_{p^i},$$ is an {\em algebra} isomorphism with inverse given by 
$$F_r^{-1}:\mathbf{k}[\mathbb{G}_{a,r}]\to \mathbf{k}[X_0,\ldots,X_{r-1}]/(X_0^p,\ldots,X_{r-1}^p),\quad \delta_n\mapsto \frac{1}{a_{0}! \cdots a_{r-1}!} X_0^{a_0} \cdots X_{r-1}^{a_{r-1}},$$ where $n = a_0 + \cdots + a_{r-1}p^{r-1}$ is the $p$-adic expansion of $n$.

\subsection{The center $\mathscr{Z}(\mathbb{G}_{a,1})$}\label{sec:appendix1}
Since $F_1^{-1}:\mathbf{k}[\mathbb{G}_{a,1}] \xrightarrow{\cong }\mathscr{O}(\mathbb{G}_{a,1})$ is a Hopf algebra isomorphism, it follows that Hopf algebra maps $\mathbf{k}[\mathbb{G}_{a,1}] \to \mathscr{O}(\mathbb{G}_{a,1})$ are parameterized by elements $\lambda \in \mathbf{k}$, where 
\begin{equation}\label{bz}
B_{\lambda}: \mathbf{k}[\mathbb{G}_{a,1}] \to \mathscr{O}(\mathbb{G}_{a,1}),\quad \delta_n\mapsto \frac{\lambda^n}{n!}t^n.
\end{equation}
This is an isomorphism if and only if $\lambda \neq 0$. Note that $B_{\lambda}=\overline{B_{\lambda}}$ if and only if $\lambda=0$ or $p=2$ (since $B_{\lambda}=B_{\lambda}^*$).

It now follows from Corollary \ref{main0} in a straightforward manner that for each $\lambda\in\mathbf{k}$, 
\begin{equation}\label{rz1}
R_{\lambda}:=\sum_{i=0}^{p-1}\frac{\lambda^i}{i!}(t^i\ot t^i)
\end{equation}
is an $R$-matrix for $D(\mathbb{G}_{a,1},\mathbb{G}_{a,1},B_{\lambda})=\mathbf{k}[\mathbb{G}_{a,1}^{\vee}]$. By Corollary \ref{main4}(2), $(\mathbf{k}[\mathbb{G}_{a,1}^{\vee}],R_{\lambda})$ is factorizable if and only if $\lambda \neq 0$; that is, $R_{\lambda}\ne 1\ot 1$. 

\subsection{The center $\mathscr{Z}(\mathbb{G}_{a,2})$}\label{sec:appendix2} 
By Theorem \ref{quotientha}, since $\mathbb{G}_{a,2}$ is commutative, Hopf quotient pairs of $D(\mathbb{G}_{a,2})$ are indexed by $(K,H,B)$, where $K$ and $H$ are subgroup schemes of $\mathbb{G}_{a,2}$, and $B : \mathbf{k}[H] \to \mathscr{O}(K)$ is a Hopf algebra map. 
We will now compute the Hopf quotient pairs for $K = H = \mathbb{G}_{a,1}$, up to equivalence, following the construction laid out in \S \ref{sec:Hopf algebra quotients of D(G)}. 

We have 
$$q:=q_{\mathbb{G}_{a,1}} : \mathscr{O}(\mathbb{G}_{a,2}) \twoheadrightarrow \mathscr{O}(\mathbb{G}_{a,1}),\quad t\mapsto  t,$$ with section 
$$\mu:=\mu_{\mathbb{G}_{a,1}} : \mathscr{O}(\mathbb{G}_{a,1}) \xrightarrow{1:1} \mathscr{O}(\mathbb{G}_{a,2}),\quad t\mapsto  t.$$ 

Next consider the exact sequence of Hopf algebras 
$$\mathbf{k} \to \mathbf{k}[\mathbb{G}_{a,1}] \to \mathbf{k}[\mathbb{G}_{a,2}] \xrightarrow{\pi:=\pi_{\mathbb{G}_{a,1}}} \mathbf{k}[\mathbb{G}_{a,2}/\mathbb{G}_{a,1}] \to \mathbf{k}.$$ 

\begin{lemma}
The following hold:
\begin{enumerate}
\item
The surjective Hopf algebra map $\pi$ is determined by 
$$\delta_n \mapsto \begin{cases}
\delta_{n/p},& p\mid n\\
0,& p\nmid n
\end{cases}$$
\item
The map $\gamma:=\gamma_{\mathbb{G}_{a,1}}$ determined by 
$$\gamma: \mathbf{k}[\mathbb{G}_{a,2}/\mathbb{G}_{a,1}] \to \mathbf{k}[\mathbb{G}_{a,2}],\quad \delta_n \mapsto  \delta_{pn},$$
is a cleaving map.
\item
$\gamma$ is {\em not} a coalgebra map.
\item
The convolution inverse of $\gamma$ is given by $\gamma^{-1}(\delta_n)=(-1)^n\delta_{pn}$.
\item
The action $\cdot$ of $\mathbf{k}[\mathbb{G}_{a,2}/\mathbb{G}_{a,1}]$ on $\mathscr{O}(\mathbb{G}_{a,1})$ is trivial. 
\item
$\eta:=\eta_{\mathbb{G}_{a,1}} = \id \star \gamma^{-1}\pi$ is the projection of $\mathbf{k}[\mathbb{G}_{a,2}]$ onto the subspace $\mathbf{k}[\mathbb{G}_{a,1}]$.
\item
The convolution inverse of $\eta$ is given by 
$$\eta^{-1}(\delta_n)=
\begin{cases}
(-1)^n\delta_n,& 0\le n<p,\\
0,& p\le n<p^2.
\end{cases}$$
\end{enumerate}
\end{lemma}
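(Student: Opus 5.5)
The plan is to verify each item by direct computation in the bases $\{\delta_n\}$ of $\mathbf{k}[\mathbb{G}_{a,2}]$ and $\mathbf{k}[\mathbb{G}_{a,2}/\mathbb{G}_{a,1}]\cong\mathbf{k}[\mathbb{G}_{a,1}]$. I would dualize wherever possible.

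For (1), note that $\mathscr{O}(\mathbb{G}_{a,2}/\mathbb{G}_{a,1})=\mathscr{O}(\mathbb{G}_{a,2})^{\mathbb{G}_{a,1}}$ is the Hopf subalgebra $\mathbf{k}[t^p]$, with $s:=t^p$ primitive. Then $\pi$ is the transpose of the inclusion $s^j\mapsto t^{pj}$. Pairing $\delta_n$ against $t^{pj}$ gives $\delta_{n,pj}$, so $\pi(\delta_n)=\delta_{n/p}$ when $p\mid n$, and $\pi(\delta_n)=0$ otherwise.

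For (2), first $\pi\gamma=\id$ and $\varepsilon\gamma=\varepsilon$. Colinearity (\ref{gammacolinear}) is checked directly:
$$\gamma(\delta_n)_1\ot\pi(\gamma(\delta_n)_2)=\sum_{a+b=pn}\delta_a\ot\pi(\delta_b)=\sum_{j}\delta_{pn-pj}\ot\delta_j,$$
and this equals $\gamma((\delta_n)_1)\ot(\delta_n)_2$. Convolution invertibility then follows from (4).

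For (3), I would observe that $\Delta(\delta_{pn})$ contains terms $\delta_a\ot\delta_b$ with $p\nmid a$ (for instance $\delta_1\ot\delta_{p-1}$ inside $\Delta(\delta_p)$). These do not lie in $\im\gamma\ot\im\gamma$.

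For (4), the convolution inverse must satisfy
$$\sum_{j}\gamma(\delta_{n-j})\gamma^{-1}(\delta_j)=\varepsilon(\delta_n).$$
Here the product $\delta_{p(n-j)}\delta_{pj}=\binom{pn}{pj}\delta_{pn}$, and by Lucas $\binom{pn}{pj}\equiv\binom{n}{j}$. So it suffices that $\sum_j\binom{n}{j}(-1)^j=\delta_{n,0}$, which holds. The right-inverse identity follows the same way, since everything is commutative.

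For (5), I would use $x\cdot a=q(\gamma(x)\coadj\mu(a))$. The coadjoint action is trivial because $\mathbb{G}_{a,2}$ is commutative (the remark after (\ref{left coadjoint action2})). Hence $x\cdot a=\varepsilon(\gamma(x))q(\mu(a))=\varepsilon(x)a$.

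For (6), I would compute
$$\eta(\delta_n)=\sum_{a+b=n}\delta_a\gamma^{-1}(\pi(\delta_b))=\sum_{j}(-1)^j\delta_{n-pj}\delta_{pj}.$$
Write $n=n_0+pn_1$ with $0\le n_0<p$. Then $\delta_{n-pj}\delta_{pj}=\binom{n}{pj}\delta_n$, and by Lucas $\binom{n}{pj}\equiv\binom{n_1}{j}$. Summing with signs gives $\delta_{n_1,0}\delta_n$. So $\eta(\delta_n)=\delta_n$ when $n<p$ and $\eta(\delta_n)=0$ otherwise, which is the projection onto $\mathbf{k}[\mathbb{G}_{a,1}]=\mathrm{span}\{\delta_0,\dots,\delta_{p-1}\}$.

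For (7), I would use Lemma \ref{gcphgh}(5): $\eta^{-1}(u)=\gamma(\pi(u_1))S(u_2)$. This gives
$$\eta^{-1}(\delta_n)=\sum_j \delta_{pj}(-1)^{n-pj}\delta_{n-pj}.$$
By the same Lucas computation the coefficient is $(-1)^{n}\sum_j(-1)^{pj}\binom{n_1}{j}$. For odd $p$ this is $(-1)^n\delta_{n_1,0}$, as required. For $p=2$ the signs are trivial, $(1+1)^{n_1}$ vanishes mod $2$ when $n_1\ge1$, and $(-1)^n=1$, so the formula still holds.

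The main obstacle is purely bookkeeping: applying Lucas' theorem correctly to the products $\delta_m\delta_n$, and treating $p=2$ separately in (4) and (7). Everything else is formal.
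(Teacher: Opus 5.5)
Your proposal is correct and takes essentially the paper's route: a direct computation in the $\delta_n$ bases, where Lucas' theorem (in the form $\binom{pn_1+n_0}{pj}\equiv\binom{n_1}{j} \bmod p$) does the work in (4), (6) and (7), exactly as in the paper's proof of (6), and where your use of $\eta^{-1}=(\gamma\pi)\star S$ from Lemma \ref{gcphgh}(5) is what the paper's ``similar to (6)'' amounts to. The only variation is in (1): you obtain $\pi$ as the transpose of the inclusion $\mathscr{O}(\mathbb{G}_{a,2})^{\mathbb{G}_{a,1}}=\mathbf{k}[t^p]\hookrightarrow\mathscr{O}(\mathbb{G}_{a,2})$, whereas the paper exhibits an algebra map with kernel $\mathbf{k}[\mathbb{G}_{a,1}]^+\mathbf{k}[\mathbb{G}_{a,2}]=\mathrm{span}\{\delta_n\mid p\nmid n\}$; your version has the small advantage of fixing the identification $\mathbf{k}[\mathbb{G}_{a,2}/\mathbb{G}_{a,1}]\cong\mathbf{k}[\mathbb{G}_{a,1}]$ and giving the Hopf property at once.
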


\begin{proof}
(1) It is easy to see that $\mathbf{k}[\mathbb{G}_{a,1}]^+ = \text{sp}_{\mathbf{k}}\{\delta_1,\ldots,\delta_{p-1}\}$, and moreover that $$\mathbf{k}[\mathbb{G}_{a,1}]^+ \mathbf{k}[\mathbb{G}_{a,2}] = \text{sp}_{\mathbf{k}}\{\delta_n \mid p \nmid n\}.$$ The $\mathbf{k}$-linear map $\mathbf{k}[\mathbb{G}_{a,2}] \to \mathbf{k}[\mathbb{G}_{a,1}]$ determined by $$\delta_n \mapsto \begin{cases}
\delta_{n/p},& p\mid n\\
0,& p\nmid n
\end{cases}$$ is a surjective algebra map with kernel $\mathbf{k}[\mathbb{G}_{a,1}]^+ \mathbf{k}[\mathbb{G}_{a,2}]$, so $\mathbf{k}[\mathbb{G}_{a,2}/\mathbb{G}_{a,1}]\cong \mathbf{k}[\mathbb{G}_{a,1}]$ and this map is precisely the Hopf algebra map $\pi$, as claimed.

(2) - (4) These are straightforward verification, and (5) is clear.

(6) For every $0\le n<p^2$, we have 
$$\eta(\delta_n) = \sum_{a+b = n} \delta_a \gamma^{-1}\pi(\delta_b) = \sum_{a+pb' = n} (-1)^{b'} \delta_a \delta_{pb'} = \sum_{a + pb' = n} (-1)^{b'} \binom{n}{a} \delta_{n}.$$ Write $n = ps + r$ with $0 \le s,r < p$. Then pairs $(a,b')$ satisfying $a+pb' = n$ are of the form $(r+ip, s-i)$ for $0 \le i \le s$. Hence $$\eta(\delta_n) = \sum_{i=0}^s (-1)^{s-i} \binom{r+ps}{r+pi} \delta_n = \sum_{i=0}^s (-1)^{s-i} \binom{s}{i} \delta_n,$$ the last equality being another application of Lucas' theorem. This sum equals $0$ if $s > 0$, and equals $\delta_n$ if $s = 0$. In other words, we have
$$
\eta(\delta_n) = 
\begin{cases}
\delta_n,& 0\le n<p,\\
0,& p\le n<p^2,
\end{cases}$$ 
so the claim follows.

(7) Similar to (6).
\end{proof}

Recall the definitions of $\bar{\sigma}, \bar{\tau}$ and $\sigma, \tau$ given in (\ref{defnolsigma}), (\ref{defnoltau}) and (\ref{defnsigma}), (\ref{defntau}).
	
\begin{lemma}
The cocycle $\sigma$ is trivial and the cococycle $\tau$ satisfies $\tau(\delta_0) = 1 \otimes 1$, 
$$\tau(\delta_1) = \sum_{j=1}^{p-1} \frac{\lambda^p}{j! (p-j)!} t^j \otimes t^{p-j},$$ 
\label{tauappendix} and $\tau(\delta_n) = 0$ for $2 \le n < p$.
\end{lemma}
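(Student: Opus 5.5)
The plan is to handle $\sigma$ by a structural remark and $\tau$ by a direct computation. I would use the explicit data from the preceding lemma: $\gamma(\delta_n)=\delta_{pn}$, $\eta$ the projection onto $\mathbf{k}[\mathbb{G}_{a,1}]=\mathrm{sp}_{\mathbf{k}}\{\delta_0,\dots,\delta_{p-1}\}$, $\eta^{-1}(\delta_a)=(-1)^a\delta_a$ for $a<p$ and $0$ otherwise, and $B=B_\lambda$ from (\ref{bz}).

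\emph{Triviality of $\sigma$.} I will show that $\gamma$ is an algebra map and then invoke Remark \ref{trivial}(2). In $\mathbf{k}[\mathbb{G}_{a,2}]$ we have $\gamma(\delta_m)\gamma(\delta_n)=\delta_{pm}\delta_{pn}=\binom{pm+pn}{pm}\delta_{p(m+n)}$. By Lucas' theorem $\binom{p(m+n)}{pm}\equiv\binom{m+n}{m}$ mod $p$, which is exactly the structure constant of $\delta_m\delta_n$ in $\mathbf{k}[\mathbb{G}_{a,2}/\mathbb{G}_{a,1}]\cong\mathbf{k}[\mathbb{G}_{a,1}]$. When $m+n\ge p$, both sides vanish. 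Together with $\gamma(\delta_0)=\delta_0$, this gives that $\gamma$ is an algebra map, so $\sigma=1$.

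\emph{Computation of $\tau$.} I would apply Lemma \ref{newlemma}(2) with $u=\gamma(\delta_n)=\delta_{pn}$, so that $\pi(u)=\delta_n$ and
$$\tau(\delta_n)=\sum_{a+b+c=pn}\ \sum_{i=0}^{a}(-1)^a\,B(\delta_i\delta_b)\otimes B(\delta_{a-i}\delta_c),$$
where only $a,b,c<p$ contribute, because $\eta$ and $\eta^{-1}$ kill $\delta_m$ for $m\ge p$. The key observation is that $\delta_i\delta_b$, computed inside $\mathbf{k}[\mathbb{G}_{a,1}]$, equals $\binom{i+b}{i}\delta_{i+b}$ and vanishes when $i+b\ge p$ (again by Lucas). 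Hence a nonzero term needs $i+b<p$ and $(a-i)+c<p$, which forces $pn<2p$. This settles $\tau(\delta_n)=0$ for $n\ge2$, and for $n=0$ only the term $a=b=c=0$ survives, giving $1\ot1$.

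For $n=1$, I would use $B(\delta_i\delta_b)=\frac{\lambda^{i+b}}{i!\,b!}t^{i+b}$ and group the terms by $j=i+b$. The constraints force $1\le j\le p-1$, and the total degree gives the factor $\lambda^{j}\lambda^{p-j}=\lambda^p$. The coefficient of $t^j\otimes t^{p-j}$ then becomes a double alternating sum over $i$ and $k=a-i$ of $\frac{(-1)^{i+k}}{i!(j-i)!\,k!(p-j-k)!}$. The target is to show that this equals $\frac{1}{j!(p-j)!}$.

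I expect this last step to be the main obstacle, specifically the sign bookkeeping. Taken at face value, the sum factors as
$$\frac{1}{j!}\sum_i(-1)^i\binom{j}{i}\cdot\frac{1}{(p-j)!}\sum_k(-1)^k\binom{p-j}{k},$$
which vanishes for $1\le j\le p-1$. So a naive application would give the wrong answer, and I must recheck two things carefully. The first is the exact convention in (\ref{defntau}): which tensor leg of $\Delta(\eta^{-1}(\cdot))$ pairs with which $\eta$, and whether $S$ or the $\mathrm{cop}$ structure reverses one sign. The second is whether the non-coalgebra nature of $\gamma$ (part (3) of the preceding lemma) means that $\gamma(\delta_1)_1\otimes\gamma(\delta_1)_2\otimes\gamma(\delta_1)_3$ should be fed directly into (\ref{defntau}) rather than through Lemma \ref{newlemma}(2).

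If both routes still give a factorized alternating sum, I would look for the surviving contribution among the boundary terms $a=p$ or $c=p$, where the two tensor factors come from $\delta_p$. Such terms are killed by $\eta$ but not necessarily by the original formula with $\gamma$. These are exactly the terms that would produce $\frac{\lambda^p}{j!(p-j)!}t^j\otimes t^{p-j}$ via $\Delta(\delta_p)=\sum_j\delta_j\otimes\delta_{p-j}$ followed by $B\otimes B$, and this is the expression I would try to isolate.
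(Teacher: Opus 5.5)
\textbf{Gap in the case $n=1$.} Your handling of $\sigma$ is correct. It differs slightly from the paper: you use Lucas to show $\gamma$ is an algebra map and invoke Remark \ref{trivial}(2), whereas the paper notes that $\overline{\sigma}(\delta_m,\delta_n)$ is a multiple of $\delta_{p(m+n)}$ lying in $\mathbf{k}[\mathbb{G}_{a,1}]$. Your degree argument for $\tau(\delta_0)=1\ot 1$ and for $\tau(\delta_n)=0$, $n\ge 2$, is the paper's.

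For $n=1$, however, you never reach the stated coefficient. The reason you give for the apparent failure is itself a bookkeeping error. When you factor the coefficient of $t^j\ot t^{p-j}$ as
\[
\frac{1}{j!}\sum_i(-1)^i\binom{j}{i}\cdot\frac{1}{(p-j)!}\sum_k(-1)^k\binom{p-j}{k},
\]
you sum over all $0\le i\le j$ and $0\le k\le p-j$. This drops the constraint $a=i+k<p$, which you had stated yourself; it comes from $\eta^{-1}(\delta_a)=0$ for $a\ge p$. For $1\le j\le p-1$ the other constraints $b=j-i<p$ and $c=p-j-k<p$ hold automatically. So exactly one pair is excluded, namely $(i,k)=(j,p-j)$, i.e.\ $a=p$, $b=c=0$. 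The correct coefficient is therefore the vanishing full product minus that single term:
\[
\lambda^p\Bigl(0-\frac{(-1)^p}{j!\,(p-j)!}\Bigr)=\frac{(-1)^{p+1}\lambda^p}{j!\,(p-j)!}=\frac{\lambda^p}{j!\,(p-j)!}
\]
in characteristic $p$, including $p=2$. This is exactly the paper's computation. There the excluded pair appears as the condition $0<b+c$, and Vandermonde gives $\kappa_j=\sum_{m=1}^{p}(-1)^{p-m}\binom{p}{m}=1$.

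\textbf{Why your proposed remedies would not help.} There is no sign convention to fix in (\ref{defntau}). Lemma \ref{newlemma}(2) with $u=\delta_p$ is literally the same expression as (\ref{defntau}) with $\gamma(\delta_1)=\delta_p$, so the two routes cannot disagree, and the fact that $\gamma$ is not a coalgebra map plays no role. The boundary terms with $a=p$ (or $c=p$) also do not survive in the original formula: there too $\eta^{-1}$ is applied to $\gamma(\delta_1)_1$ (and $\eta$ to $\gamma(\delta_1)_3$), and $\eta^{-1}(\delta_p)=\eta(\delta_p)=0$. Putting those terms back would simply restore the full product, which is $0$. The nonzero answer comes from \emph{omitting} the $a=p$ term from an otherwise vanishing alternating sum, not from including it.
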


\begin{proof}
By the definition of $\bar{\sigma}$, $\bar{\sigma}(\delta_m,\delta_n)$ is an integer multiple of $\delta_{p(m+n)}$. Since the image of $\bar{\sigma}$ is in $\mathbf{k}[\mathbb{G}_{a,1}]$, it is nonzero only when $m = n = 0$, so $\bar{\sigma} = \varepsilon \otimes \varepsilon$.
	
Next we compute $\bar{\tau}$. Since $\eta$ and $\eta^{-1}$ annihilate $\delta_j$ for $j\ge p$, any nonzero contribution to $\overline{\tau}(\delta_n)$ must come from summands in $\Delta^{(2)}(\gamma(\delta_n))=\Delta^{(2)}(\delta_{pn})
=\sum_{a+b+c=pn}\delta_a\otimes\delta_b\otimes\delta_c$ with $a,b,c<p$. Expanding the definition of $\overline{\tau}$ we get $$\overline{\tau}(\delta_n)=\sum_{\substack{a+b+c=pn\\ a,b,c<p}} (-1)^a\sum_{r+s=a}\binom{r+b}{b}\binom{s+c}{c} \delta_{r+b}\otimes\delta_{s+c}\in \mathscr{O}(\mathbb{G}_{a,1})^{\ot 2}.$$ Since $(r+b) + (s+c) = pn$, every term has total degree $pn$. Since the total degree of a simple tensor $\delta_u \otimes \delta_v$ in $\mathscr{O}(\mathbb{G}_{a,1})^{\ot 2}$ is at most $2(p-1)$, $\overline{\tau}(\delta_n) = 0$ for $n \ge 2$.
	
Since $\delta_0$ is the unit in $\mathbf{k}[\mathbb{G}_{a,1}]$, we have $\gamma(\delta_0)=\delta_0$ and $\eta(\delta_0)=\eta^{-1}(\delta_0)=\delta_0$, so $\overline{\tau}(\delta_0)=\delta_0\otimes\delta_0$. All that is left to compute is the case when $n=1$. We have $$\overline{\tau}(\delta_1)=\sum_{\substack{a+b+c=p\\ a,b,c<p}} (-1)^a\sum_{r+s=a}\binom{r+b}{b}\binom{s+c}{c} \delta_{r+b}\otimes\delta_{s+c}.$$ Let us extract the coefficient $\kappa_j$ of $\delta_j\otimes\delta_{p-j}$. Since $a=p-b-c$ and $a < p$, we have $b+c > 0$. Setting $r+b=j$ and $s+c=p-j$ gives
$$\kappa_j =\sum_{\substack{0\le b,c<p\\ 0 < b+c\le p}}
(-1)^{p-b-c}\binom{j}{b}\binom{p-j}{c}.$$ If $1 \le j \le p-1$, then grouping by $m=b+c$ yields $$\kappa_j = \sum_{m=1}^{p}(-1)^{p-m}\sum_{b=0}^m\binom{j}{b}\binom{p-j}{m-b} = \sum_{m=1}^{p}(-1)^{p-m}\binom{p}{m} = 1,$$ where we have used Vandermonde's identity in the second equality. If $j = 0$ or $j = p$, then we cannot have $m = p$ when grouping by $m = b+c$. Thus, in this case $$\kappa_j = \sum_{m=1}^{p-1} (-1)^{p-m}\binom{p}{m} = 0.$$ Hence, $\overline{\tau}(\delta_1)=\sum_{j=1}^{p-1}\delta_j\otimes\delta_{p-j}$, so by (\ref{bz}), we get the result.
\end{proof}

Finally, it follows from Corollary \ref{main0} in a straightforward manner that  
\begin{equation}\label{rz2}
R_{\lambda}:=\sum_{i=0}^{p-1}\frac{\lambda^i}{i!}(t^i\#1)\ot (t^i\#1)
\end{equation}
is an $R$-matrix for $D(\mathbb{G}_{a,1},\mathbb{G}_{a,1},B_{\lambda})= \mathbf{k}[\mathbb{G}_{a,1}^{\vee}]\#^{\tau} \mathbf{k}[\mathbb{G}_{a,2}/\mathbb{G}_{a,1}]=\mathbf{k}[\widetilde{\mathbb{G}_{a,1}}]$ for each $\lambda\in\mathbf{k}$.


\end{document}